\newtheorem{theorem}{Theorem}[section]
\newtheorem{corollary}[theorem]{Corollary}
\newtheorem{lemma}[theorem]{Lemma}
\newtheorem{proposition}[theorem]{Proposition}
\theoremstyle{definition}
\newtheorem{definition}[theorem]{Definition}
\newtheorem{remark}[theorem]{Remark}
\theoremstyle{definition}
\newcommand{\N}{\mathbb{N}}
\newcommand{\Z}{\mathbb{Z}}
\newcommand{\Q}{\mathbb{Q}}
\newcommand{\R}{\mathbb{R}}
\newcommand{\cD}{\mathcal{D}}
\newcommand{\cE}{\mathcal{E}}
\newcommand{\ba}{\mathbf{a}}
\newcommand{\bx}{\mathbf{x}}
\newcommand{\by}{\mathbf{y}}
\newcommand{\bv}{\mathbf{v}}
\DeclareMathOperator{\dist}{dist}
\DeclareMathOperator{\diam}{diam}
\newcommand{\haus}{\dim_{\mathrm H}}
\newcommand{\dbar}{\overline{d}} 
 \newcommand{\dB}{d^\ast} 
\begin{document}

\title{Fractal transference principles for subsets of $\mathbb{N}^d$ of positive density}
\author{By~~{\scshape Zhuowen~Guo}~~and~~{\scshape Kangbo~Ouyang}~~and~~{\scshape Jiahao~Qiu}~~and~~{\scshape Shuhao~Zhang}}
\date{}

\maketitle

\begin{abstract}
We establish a multidimensional fractal transference principle for digit-restricted sets
associated with subsets of $\mathbb{N}^d$, extending the one-dimensional framework of Nakajima--Takahasi (Adv. Math., 2025).
We develop general Hausdorff-dimension tools via the singular value potential $\phi^s(\mathbf a)$
and the multivariate Dirichlet series
$\zeta_S(\boldsymbol{\sigma})
=\sum_{\mathbf a\in S}\prod_{j=1}^d a_j^{-\sigma_j}$.
Let
$s_\ast:=\inf\{s>0:\sum_{\mathbf a\in S}\phi^s(\mathbf a)<\infty\}$ and
$\Lambda_S:=\inf\{\sigma_1+\cdots+\sigma_d:\zeta_S(\boldsymbol{\sigma})<\infty\}$.
We obtain $\dim_H(\mathcal E_S)\le s_\ast$, where
$\mathcal E_S\subset(0,1)^d$ denotes the set of points whose continued-fraction digit vectors
lie in $S$ and whose coordinates escape (i.e.\ $a_n(x_j)\to\infty$ for each $j$),
and $s_\ast=\tfrac12\Lambda_S$ for uniformly
$K$--balanced $S$.
In particular, if $S\subset\mathbb{N}^d$ has positive upper (or upper Banach) density then
$\dim_H(\mathcal E_S)=d/2$.
On the combinatorial side, the transference principle ensures that translation-invariant
configurations forced at positive density, including multidimensional Szemer\'edi patterns,
persist inside the induced fractal digit sets.
\end{abstract}

	\tableofcontents
	

\section{Introduction}
 
In this paper we study digit-restricted continued-fraction fractals through the lens of
fractal transference principles.  Roughly speaking, we show that density phenomena in
$\N^d$ can be encoded into Hausdorff-typical digit sets for continued fractions without
loss of Hausdorff dimension.  Along the way we obtain sharp Hausdorff-dimension bounds
for the resulting digit-restricted limit sets.

Regular continued fractions give a canonical symbolic model for the Gauss map and are a
cornerstone of metric Diophantine approximation; see, e.g.,
\cite{Khinchin1964,Cassels1957,IosifescuKraaikamp2002,UrbanskiZdunik2016}.
Restricting the allowed digits in the continued-fraction expansion leads to Cantor-like
limit sets in $[0,1]$, whose size is naturally measured by Hausdorff dimension.
This circle of ideas originates in the work of Good and Hirst
\cite{Good1941,Hirst1970,Hirst1973} and has been substantially developed since then
(see, e.g., \cite{Cusick1990} and references therein).
Our results fit into this tradition, but with an additional combinatorial input coming
from density questions on digit sets.

A convenient modern approach treats digit-restricted continued-fraction sets via
conformal iterated function systems and thermodynamic/symbolic methods
\cite{MauldinUrbanski1999,MUbook,Walters1982,Sarig1999}.
This framework also highlights that, beyond Hausdorff dimension, finer invariants such
as Hausdorff measure at the critical dimension and packing dimension can be subtle,
especially for infinite alphabets.
For example, Mauldin and Urbański~\cite{MauldinUrbanski1999} characterized the packing
dimension of limit sets for general conformal iterated function systems and applied this
to continued fractions with restricted digits, relating geometric size to arithmetic
density properties of the allowed entries; see also \cite{MUbook}.
In particular, the critical Hausdorff measure may behave delicately, and packing-type
substitutes and arithmetic criteria often enter (cf.\ \cite{MauldinUrbanski1999,MUbook}).

A basic theme in this area is that arithmetic growth of the allowed digits often
controls the fractal size of the corresponding limit set.
To illustrate this in dimension one, let $B$ be an infinite increasing sequence of
positive integers $b_1<b_2<\cdots$, and let $\tau$ be the exponent of convergence of the
series $\sum_{n=1}^\infty b_n^{-s}$, namely
\[
\tau:=\inf\Bigl\{s\ge 0:\ \sum_{n=1}^\infty b_n^{-s}<\infty\Bigr\}.
\]
For the limit set
\[
\bigl\{x\in[0,1]:\ a_n(x)\in B\ \text{for all }n\ge1\ \text{and}\ a_n(x)\to\infty
\ \text{as }n\to\infty\bigr\},
\]
\cite{Hirst1973} gives the upper bound $\tau/2$ for its Hausdorff dimension, and
\cite{WangWu2008a} shows this bound is attained.
Related dimension problems for limsup-type digit conditions have also been studied; see,
for example, Wang--Wu~\cite{WangWu2008b}.

Digit restrictions are also closely tied to arithmetic and applications.
Bounded-digit sets connect to rational-approximation problems such as Zaremba-type
questions and to transfer-operator methods; see, e.g.,
\cite{Hensley1992,BourgainKontorovich2014,LeeZaremba2025}.
For example, Zaremba's conjecture asks whether every positive integer can occur as the
denominator of a rational number whose continued-fraction digits are uniformly bounded by
an absolute constant, and Bourgain--Kontorovich~\cite{BourgainKontorovich2014} established
this for a set of denominators of density one, highlighting links to low-discrepancy
constructions in numerical integration and pseudorandom number generation.
Continued-fraction algorithms also play a renormalization role beyond the Gauss map:
Berth\'e--Steiner--Thuswaldner~\cite{BertheSteinerThuswaldner2023} construct symbolic
codings for toral translations using multidimensional continued-fraction algorithms
realized by sequences of substitutions, obtaining Sturmian-type low complexity and
multiscale bounded remainder sets.

These examples emphasize that digit restrictions encode arithmetic information in a
geometric way.
In this paper, however, we work with the \emph{coordinatewise} continued fraction
expansion and impose digit constraints through subsets of $\N^d$.
More generally, once different directions contract at different rates, sharp
Hausdorff-dimension bounds typically require multi-scale coverings and can depend on how
the allowed pieces are distributed among directions, even in diagonal linear models;
see McMullen~\cite{McMullen1984} and the survey of Fraser~\cite{Fraser2021}, and also
Bara\'nski~\cite{Baranski2007}.
Our goal is to combine this geometric viewpoint with density input from additive
combinatorics, via a higher-dimensional fractal transference principle.

On the additive-combinatorial side, Szemer\'edi-type density theorems show that
positive-density subsets of $\N$ and $\N^d$ contain rich translation-invariant
configurations \cite{Szemeredi1975,Furstenberg1977,FurstenbergKatznelson1978},
with polynomial extensions due to Bergelson--Leibman \cite{BergelsonLeibman1996}.
A direct bridge between these themes is provided by fractal transference principles:
Nakajima--Takahasi~\cite{Nakajima2025} established a
\emph{fractal transference principle} in dimension one: translation-invariant density
statements can be transferred into digit sets inside continued-fraction fractals without
loss of Hausdorff dimension, under an injectivity constraint on digits.
The present paper develops higher-dimensional analogues for digit vectors in $\N^d$, and
complements them with sharp upper bounds for $\dim_H(\mathcal E_S)$ in terms of arithmetic
growth data of $S\subset\N^d$.

We now fix notation and explain how digit vectors in $\N^d$ encode subsets of $(0,1)^d$.

\subsection{Coordinatewise continued fractions and digit sets}

We recall the coordinatewise continued-fraction coding and introduce the digit sets that
link subsets of $\N^d$ with digit-restricted fractals in $(0,1)^d$.

Every irrational $x\in(0,1)$ admits a unique infinite regular continued fraction expansion
$x=[0;a_1(x),a_2(x),\dots]$, where $a_n(x)\in\N$ are the partial quotients.  For
$\mathbf{x}=(x_1,\dots,x_d)\in((0,1)\setminus\Q)^d$ we use the \emph{coordinatewise}
expansion and define the digit vector at time $n$ by
\[
  \mathbf a_n(\mathbf x):=\bigl(a_n(x_1),\dots,a_n(x_d)\bigr)\in\N^d.
\]
While the coordinate dynamics are independent, digit restrictions will couple the
coordinates through a subset $S\subset\N^d$.

In our transference arguments it is convenient to work in a regime where digits are
\emph{seen only once} in time, so that a point naturally generates an infinite digit set.
In dimension $d=1$ we consider
\[
  E_1:=\{x\in(0,1)\setminus\Q:\ a_m(x)\neq a_n(x)\ \text{for all }m\neq n\}.
\]
The study of $E_1$ was suggested by P.~Erd\H{o}s; see Ramharter~\cite[p.~11]{Ramharter1985}.
Ramharter~\cite{Ramharter1985} proved that $\dim_H(E_1)=\tfrac12$; see also
Jordan--Rams~\cite{JordanRams2012}.  For comparison, Good's landmark theorem
\cite[Theorem~1]{Good1941} asserts that the escaping set
\[
  \{x\in(0,1)\setminus\Q:\ a_n(x)\to\infty\ \text{as }n\to\infty\}
\]
also has Hausdorff dimension $\tfrac12$.

For $d\ge 1$ we write
\[
  \mathcal E^{\mathrm{inj}}
  :=
  E_1^d
  =
  \Bigl\{\mathbf x\in((0,1)\setminus\Q)^d:
  a_m(x_j)\neq a_n(x_j)\ \text{for all }m\neq n,\ 1\le j\le d\Bigr\}.
\]
As a corollary of Theorem~\ref{thm:intro-universal}, we have
$\dim_H(\mathcal E^{\mathrm{inj}})=d/2$.
Although $\mathcal E^{\mathrm{inj}}$ is a Cartesian product, the corresponding product
upper bound is not automatic for general fractal sets; in our setting it follows from a
classical product estimate (see Lemma~\ref{lem:marstrand-product} in
Section~\ref{sec:upper-bound} and Marstrand~\cite{Marstrand1954}).

Similar product phenomena occur in other symbolic codings.  For example,
Cheng--Zhang~\cite[Theorem~1.4]{ChengZhang2025} study exact-approximation sets arising from
$\beta$-expansions and obtain a sharp formula for the Hausdorff dimension of their
Cartesian products when $d\ge2$.  At the level of critical Hausdorff measure, even very
classical self-similar products can be delicate: Guo--Jones~\cite{GuoJones2025} provide
improved upper and lower bounds for the critical Hausdorff measure of Cartesian products
of the middle-third Cantor set, extending what was previously known in dimension two.

For each $\mathbf x\in((0,1)\setminus\Q)^d$ we define the associated digit set
\[
  D(\mathbf x):=\{\mathbf a_n(\mathbf x):n\ge 1\}\subset\N^d,
\]
and note that $D(\mathbf x)$ has no repetitions whenever $\mathbf x\in\mathcal E^{\mathrm{inj}}$.
This provides the basic interface between combinatorics on $\N^d$ and digit-restricted
fractal subsets of $(0,1)^d$.

On $\N^d$ we use the standard F{\o}lner boxes $Q_N:=[1,N]^d\cap\N^d$ and their translates
$Q_N(\mathbf v):=\mathbf v+Q_N$. For $S\subset\N^d$ we write
\[
  \dbar(S):=\limsup_{N\to\infty}\frac{|S\cap Q_N|}{|Q_N|},
  \qquad
  \dB(S):=\limsup_{N\to\infty}\sup_{\mathbf v\in\N^d}\frac{|S\cap Q_N(\mathbf v)|}{|Q_N|}
\]
for the upper density and upper Banach density, respectively.

Finally, for an infinite $S\subset\N^d$ we write $\widetilde{\mathcal E}_S$ and $\mathcal E_S$
for the digit-restricted sets introduced in Subsection~\ref{subsec:hausdorff-dim-digit}.
When we need the digit vectors to be injective in time, we set
\[
  \mathcal E_S^{\mathrm{vec}}
  :=
  \Bigl\{\mathbf x\in\widetilde{\mathcal E}_S:\ 
  \mathbf a_n(\mathbf x)\neq \mathbf a_m(\mathbf x)\ (n\neq m),\ 
  a_n(x_j)\to\infty\ (1\le j\le d)\Bigr\}.
\]

\subsection{Fractal transference principles on \texorpdfstring{$\N^d$}{Nd}}

Our first main theorem is an $\N^d$-analogue of the one-dimensional transference principle
of Nakajima--Takahasi~\cite[Theorem~1.1]{Nakajima2025}. Informally, it asserts that one can
choose Hausdorff-typical points in the escaping, vector-injective regime $\mathcal E_{\N^d}^{\mathrm{vec}}$
so that their digit sets reflect the density of a prescribed subset $S\subset\N^d$.

Before stating our results, we recall the one-dimensional theorem of
Nakajima--Takahasi~\cite[Theorem~1.1]{Nakajima2025}. In dimension $d=1$, Nakajima--Takahasi~\cite[Theorem~1.1]{Nakajima2025} proved that if
$S\subset\N$ has $\dbar(S)>0$ (resp.\ $\dB(S)>0$), then there exist sets $E_S$
(resp.\ $F_S$) with
\[
  E_S\subset\mathcal E_{\N}^{\mathrm{vec}}\quad(\text{resp.\ }F_S\subset\mathcal E_{\N}^{\mathrm{vec}}),
  \qquad
  \dim_H E_S=\dim_H F_S=\dim_H(\mathcal E_{\N}^{\mathrm{vec}})=\frac12,
\]
such that the following two statements hold.
\begin{enumerate}[label=\textup{(\roman*)}, leftmargin=2em]
\item 
\[
  \dbar\Bigl(\bigcup_{n\ge1}\ \bigcap_{x\in E_S}\{a_n(x)\}\cap S\Bigr)=\dbar(S)
  \qquad
  (\text{resp.\ }\dB\Bigl(\bigcup_{n\ge1}\ \bigcap_{x\in F_S}\{a_n(x)\}\cap S\Bigr)=\dB(S));
\]
\item 
\[
  \dim_H\Bigl\{x\in\mathcal E_{\N}^{\mathrm{vec}}:\ \dbar(\{a_n(x):n\ge1\}\cap S)=\dbar(S)\Bigr\}
  =
  \dim_H(\mathcal E_{\N}^{\mathrm{vec}})
\]
(resp.\ with $\dB$ in place of $\dbar$).
\end{enumerate}

We now state an $\N^d$-analogue of this fractal transference principle.

\begin{theorem}[Fractal transference principle on $\N^d$]\label{thm:Zdd-fractal-transference}
Let $S\subset\N^d$.
\begin{itemize}[leftmargin=2em]
  \item[(a)] If $\dbar(S)>0$, then there exists a set
  $E_S\subset \mathcal E_{\N^d}^{\mathrm{vec}}$ such that
  \[
    \dim_H E_S=\dim_H(\mathcal E_{\N^d}^{\mathrm{vec}})=\frac d2
  \]
  and
  \begin{equation}\label{eq:ftp-a-uniform}
    \dbar\Bigl(
      \bigcup_{n\ge1}\ \bigcap_{\mathbf x\in E_S}\{\mathbf a_n(\mathbf x)\}\cap S
    \Bigr)
    =\dbar(S).
  \end{equation}
  In particular,
  \begin{equation}\label{eq:ftp-a-pointwise}
    \dim_H\Bigl\{\mathbf x\in\mathcal E_{\N^d}^{\mathrm{vec}}:\ 
    \dbar\bigl(\{\mathbf a_n(\mathbf x):n\ge1\}\cap S\bigr)=\dbar(S)\Bigr\}
    =\frac d2.
  \end{equation}

  \item[(b)] If $\dB(S)>0$, then there exists a set
  $F_S\subset \mathcal E_{\N^d}^{\mathrm{vec}}$ such that
  \[
    \dim_H F_S=\dim_H(\mathcal E_{\N^d}^{\mathrm{vec}})=\frac d2
  \]
  and
  \begin{equation}\label{eq:ftp-b-uniform}
    \dB\Bigl(
      \bigcup_{n\ge1}\ \bigcap_{\mathbf x\in F_S}\{\mathbf a_n(\mathbf x)\}\cap S
    \Bigr)
    =\dB(S).
  \end{equation}
  In particular,
  \begin{equation}\label{eq:ftp-b-pointwise}
    \dim_H\Bigl\{\mathbf x\in\mathcal E_{\N^d}^{\mathrm{vec}}:\ 
    \dB\bigl(\{\mathbf a_n(\mathbf x):n\ge1\}\cap S\bigr)=\dB(S)\Bigr\}
    =\frac d2.
  \end{equation}
\end{itemize}
\end{theorem}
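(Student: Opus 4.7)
The plan is to prove part (a) in detail; part (b) follows by the same construction with each Folner box $Q_{N_k}$ replaced by a translate $Q_{N_k}(\mathbf v_k)$ realizing the Banach-density limsup, and every remaining argument is identical. Throughout, the upper bound $\dim_H E \le d/2$ for any $E\subset \mathcal E_{\N^d}^{\mathrm{vec}}$ comes from Ramharter's theorem $\dim_H E_1 = 1/2$ combined with the product estimate in Lemma~\ref{lem:marstrand-product}, since $\mathcal E_{\N^d}^{\mathrm{vec}}\subset E_1^d$. The work is therefore to exhibit, given $\alpha:=\dbar(S)>0$, a set $E_S\subset \mathcal E_{\N^d}^{\mathrm{vec}}$ of Hausdorff dimension at least $d/2$ whose elements collectively witness the density $\alpha$.

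The construction is Cantor-like, alternating \emph{rigid} blocks (encoding $S$) with \emph{free} blocks (supplying Hausdorff dimension). First, choose scales $M_k\ll N_k$ with $M_{k+1}>N_k$ and $M_k\to\infty$, along a subsequence realizing $\dbar(S)=\alpha$, and set $S_k:=S\cap [M_k,N_k]^d$, so that the $S_k$ are pairwise disjoint with $|S_k|/|Q_{N_k}|\to\alpha$. Next, choose much larger auxiliary windows $B_k\subset\N^d$, pairwise disjoint and disjoint from every $S_\ell$, to serve as the reservoir of free digits. Every $\mathbf x\in E_S$ then has its digit sequence partitioned into alternating blocks: a rigid block of length $|S_k|$ listing the elements of $S_k$ in a fixed enumeration (identical for all $\mathbf x\in E_S$), followed by a free block of length $L_k$ in which the digit vectors form an arbitrary injective sequence in $B_k$. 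Disjointness of the windows enforces vector-injectivity globally, and $M_k\to\infty$ delivers coordinatewise escape, so $E_S\subset \mathcal E_{\N^d}^{\mathrm{vec}}$.

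With this construction the uniform density claim \eqref{eq:ftp-a-uniform} is nearly tautological. At a position $n$ in a rigid block, $\bigcap_{\mathbf x\in E_S}\{\mathbf a_n(\mathbf x)\}$ equals the single vector of $S_k$ prescribed there, whereas at any position in a free block the intersection is empty; so the union over $n$ is $\bigsqcup_k S_k\subset S$, whose upper density along $\{Q_{N_k}\}$ is $\alpha$ by construction, and the matching upper bound follows from containment in $S$. The pointwise statement \eqref{eq:ftp-a-pointwise} is the same remark for each individual $\mathbf x\in E_S$: its digit set contains $\bigsqcup_k S_k$, hence $\dbar(\{\mathbf a_n(\mathbf x):n\ge 1\}\cap S)=\alpha$.

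The dimension lower bound is the technical heart. Place on $E_S$ the probability measure $\mu$ which, conditional on previous blocks, distributes the free block at stage $k$ uniformly over injective sequences in a carefully chosen sub-rectangle of $B_k$. A cylinder through a $\mu$-typical point up to the end of the $k$-th free block has diameter, per coordinate $j$, of order
\[
  \prod_{\ell\le k}\ \prod_{i\in\text{block }\ell} a_i(x_j)^{-2},
\]
which factorizes into a rigid part, with all $a_i(x_j)\in[M_\ell,N_\ell]$, and a free part controlled by the Hirst--Gauss--Kuzmin-type estimate $\sum_{a\in[P,Q]}a^{-s}\asymp (Q^{1-s}-P^{1-s})/(1-s)$ for $s<1$. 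Tuning $B_k$ so that the free contributions dominate the rigid ones logarithmically and applying the mass distribution principle coordinatewise yields $\mu$-local dimension at least $1/2-\eps_k$ in each coordinate, whence $\dim_H E_S\ge d/2-\eps$ for every prescribed $\eps>0$; a diagonal choice with $\eps_k\to 0$ delivers the sharp bound $d/2$. The main obstacle is precisely this sharp-exponent step, which in dimension one is the crux of Nakajima--Takahasi's argument: the rigid blocks must not degrade the local dimension, and so $B_k$ must be exponentially larger than $N_k$ in the sense $\log|B_k|/\log N_k\to\infty$. In $d$ dimensions, the same bookkeeping works coordinatewise thanks to the disjointness of the $S_\ell$ and $B_\ell$, but the balance between rigid and free contributions has to be managed simultaneously for all $d$ coordinates, which is what ultimately forces the rectangular shape of the sub-windows of $B_k$ on which $\mu$ is supported.
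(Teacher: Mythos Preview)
Your overall strategy—alternate ``rigid'' blocks that carry the density information with ``free'' blocks that generate Hausdorff dimension—is the right surgery paradigm, and is essentially what the paper does. However, there are two substantive gaps.

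First, your treatment of part~(b) is too quick. You say the argument is ``identical'' after replacing $Q_{N_k}$ by a translate $Q_{N_k}(\mathbf v_k)$ realizing the Banach-density limsup. But nothing guarantees that such translates can be chosen with $\min_i (v_k)_i\to\infty$, and without this the rigid-block digits $S\cap Q_{N_k}(\mathbf v_k)$ may have a bounded coordinate for infinitely many $k$, destroying the escape condition $a_n(x_j)\to\infty$ and so pushing $E_S$ outside $\mathcal E_{\N^d}^{\mathrm{vec}}$. This is exactly the obstacle the paper isolates in Lemma~\ref{lem:density-reduction}: one first replaces $S$ by $S'\subset S$ with $\dB(S')=\dB(S)$ but $\dbar(S')=0$, witnessed along boxes whose translations drift to infinity in every coordinate. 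That reduction is genuinely $d$-dimensional and is not a cosmetic change from~(a).

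Second, the dimension lower bound is only sketched, and ``apply the mass distribution principle coordinatewise'' is not a proof: $E_S$ is not a product set, so one cannot read off $\dim_H$ one coordinate at a time, and the $d$-dimensional mass-distribution principle requires bounding $\mu(B(\mathbf x,r))$ for Euclidean balls, which forces a count of how many anisotropic cylinders such a ball can meet. The paper handles this by two separate devices rather than a single mass calculation. It first proves a $d$-dimensional Moran lower bound (Lemma~\ref{lem:moran-lb-nd}) yielding $\dim_H R_{t,L}(B)=d/2$ for a \emph{balanced} seed set $B\subset\mathcal C_{K_0}$ (Proposition~\ref{prop:extreme-seed}); balance is what makes the cylinders roughly cubical and the packing count tractable. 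It then shows that the \emph{elimination map}, which deletes the inserted rigid blocks, is almost Lipschitz under explicit growth conditions on $(M_k)$ (Theorem~\ref{thm:elim-almost-lip}), and Lemma~\ref{lem:almost-lip} transfers the dimension from the seed set to $E_S$. Thus the rigid-versus-free bookkeeping you allude to is not done inside a mass estimate but is packaged into a H\"older estimate for a map, and the sharp $d/2$ is obtained once, for the seed set alone. Your direct route could in principle be completed, but you would need to place $B_k$ inside a fixed $\mathcal C_K$ to control the cylinder shapes, and effectively reproduce the H\"older analysis inside your local-dimension estimate.
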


\begin{remark}\label{rem:ftp-vs-NT}
When $d=1$, \eqref{eq:ftp-a-uniform}--\eqref{eq:ftp-b-pointwise} recover the
fractal transference principle of Nakajima--Takahasi~\cite[Theorem~1.1]{Nakajima2025}.
Our proof follows the same surgery--and--elimination paradigm, but several points are
genuinely high--dimensional: one must enforce coordinatewise divergence
($\min_{1\le i\le d}a_n(x_i)\to\infty$) and vector--injectivity
($\mathbf a_n(\mathbf x)\neq \mathbf a_m(\mathbf x)$ for $n\neq m$), and the dimension
analysis of the model families is anisotropic, governed by $d$--dimensional cylinder
geometry rather than one--dimensional lengths.
\end{remark}

Consequently, any translation--invariant configuration property that holds in every
subset of $\N^d$ of positive (Banach) density transfers to
$\{\mathbf a_n(\mathbf x):n\ge1\}\cap S$ for $\dim_H$--typical
$\mathbf x\in\mathcal E_{\N^d}^{\mathrm{vec}}$.
For instance, combining Theorem~\ref{thm:Zdd-fractal-transference} with the polynomial
Szemer\'edi theorem of Bergelson--Leibman~\cite{BergelsonLeibman1996} yields the following.

 \begin{corollary}[Polynomial configurations in digit sets]\label{cor:ftp-polynomial}
Fix $r\in\N$. Let $P:\Z^r\to\Z^d$ be a polynomial mapping with $P(\mathbf 0)=\mathbf 0$,
and let $F\subset\Z^r$ be finite. Let $S\subset\N^d$.

\begin{itemize}[leftmargin=2em]
  \item[(a)] If $\dbar(S)>0$, then there exists $E_S\subset\mathcal E_{\N^d}^{\mathrm{vec}}$ with
  $\dim_H E_S=d/2$ such that the set
  \[
    \bigcup_{n\ge1}\ \bigcap_{\mathbf x\in E_S}\{\mathbf a_n(\mathbf x)\}\cap S
  \]
  has positive upper density in $\N^d$. Consequently, by the polynomial Szemer\'edi
  theorem of Bergelson--Leibman~\cite{BergelsonLeibman1996}, there exist $n\in\N$ and
  $\mathbf u\in\N^d$ such that for every $\mathbf x\in E_S$ one has
  \[
    \mathbf u+P(nF)\subset \{\mathbf a_m(\mathbf x):m\ge1\}\cap S.
  \]

  \item[(b)] If $\dB(S)>0$, then there exists $F_S\subset\mathcal E_{\N^d}^{\mathrm{vec}}$ with
  $\dim_H F_S=d/2$ such that the set
  \[
    \bigcup_{n\ge1}\ \bigcap_{\mathbf x\in F_S}\{\mathbf a_n(\mathbf x)\}\cap S
  \]
  has positive upper Banach density in $\N^d$. Consequently, by
  Bergelson--Leibman~\cite{BergelsonLeibman1996}, there exist $n\in\N$ and
  $\mathbf u\in\N^d$ such that for every $\mathbf x\in F_S$ one has
  \[
    \mathbf u+P(nF)\subset \{\mathbf a_m(\mathbf x):m\ge1\}\cap S.
  \]
\end{itemize}
\end{corollary}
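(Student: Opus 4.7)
The plan is to derive Corollary \ref{cor:ftp-polynomial} as a direct synthesis: apply Theorem \ref{thm:Zdd-fractal-transference} to build a high-dimensional test set, then feed the resulting positive-density digit set into the polynomial Szemer\'edi theorem of Bergelson--Leibman \cite{BergelsonLeibman1996}. Parts (a) and (b) are structurally identical, differing only by the density ($\dbar$ versus $\dB$); I would write (a) in detail and merely indicate the swap for (b).

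First I would invoke Theorem \ref{thm:Zdd-fractal-transference}(a) to obtain $E_S \subset \mathcal{E}_{\N^d}^{\mathrm{vec}}$ with $\dim_H E_S = d/2$ such that
$$T := \bigcup_{n \ge 1}\ \bigcap_{\mathbf x \in E_S} \{\mathbf a_n(\mathbf x)\} \cap S$$
satisfies $\dbar(T) = \dbar(S) > 0$. In particular $T \subset \N^d$ has positive upper density in $\N^d$, and a fortiori positive upper Banach density. Next I would apply the multidimensional polynomial Szemer\'edi theorem of Bergelson--Leibman to $T$: for any polynomial $P : \Z^r \to \Z^d$ with $P(\mathbf 0) = \mathbf 0$ and any finite $F \subset \Z^r$, there exist $n \in \N$ and $\mathbf u \in \N^d$ with $\mathbf u + P(nF) \subset T$.

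To finish, I would unfold the definition of $T$: each $\mathbf w \in T$ is of the form $\mathbf a_{n_{\mathbf w}}(\mathbf x)$ \emph{simultaneously} for every $\mathbf x \in E_S$ (for some index $n_{\mathbf w}$ depending only on $\mathbf w$), and lies in $S$. Hence the inclusion $\mathbf u + P(nF) \subset T$ automatically yields
$$\mathbf u + P(nF) \subset \{\mathbf a_m(\mathbf x) : m \ge 1\} \cap S \quad \text{for every } \mathbf x \in E_S,$$
which is the desired conclusion. Part (b) is proved identically with $F_S$ and $\dB$ in place of $E_S$ and $\dbar$, invoking Theorem \ref{thm:Zdd-fractal-transference}(b).

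I do not anticipate a substantive obstacle: Theorem \ref{thm:Zdd-fractal-transference} carries all the geometric and ergodic content, while Bergelson--Leibman is used as a black box. The only technical niceties are (i) ensuring that the translate $\mathbf u$ produced by Bergelson--Leibman may be taken in $\N^d$ rather than merely in $\Z^d$, which follows from the positivity of density of $T$ along the F\o lner boxes $Q_N(\mathbf v) \subset \N^d$ (all sufficiently deep translates produce configurations lying in $\N^d$), and (ii) verifying in part (a) that the hypothesis of Bergelson--Leibman is met, which is immediate since $\dbar(T) \le \dB(T)$.
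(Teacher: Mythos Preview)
Your proposal is correct and matches the paper's approach exactly: the paper does not give a separate proof of this corollary, presenting it simply as the combination of Theorem~\ref{thm:Zdd-fractal-transference} with Bergelson--Leibman, which is precisely what you do. Your unfolding of the definition of $T$ to pass from $\mathbf u+P(nF)\subset T$ to the per-point inclusion for every $\mathbf x\in E_S$ is the right observation, and your remarks on the minor technical points (i) and (ii) are appropriate.
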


\subsection{Relative fractal transference on \texorpdfstring{$\N^d$}{Nd}}

We also develop a relative version of Nakajima--Takahasi~\cite[Theorem~1.4]{Nakajima2025},
where a sparse ambient set $S\subset\N^d$ plays the role of a background set.
 For sharp upper bounds it is convenient to
control anisotropy: for $K\ge 1$ let
\[
  \mathcal C_K:=\Bigl\{\mathbf a\in\N^d:\ \frac{\max_j a_j}{\min_j a_j}\le K\Bigr\},
\]
and call $S$ \emph{uniformly $K$-balanced} if $S\subset\mathcal C_K$.

To quantify sparsity, we say that $S$ has \emph{polynomial density of exponent $\alpha\ge 1$} if
\[
  |S\cap Q_N|\asymp \frac{N^{d/\alpha}}{(\log N)^\beta},
\]
where for two nonnegative quantities depending on $N$ we write $A_N\asymp B_N$ if there exist constants $c,C>0$ such that
$c\,B_N\le A_N\le C\,B_N$ for all sufficiently large $N$.

Before stating our higher-dimensional result, we recall the one-dimensional theorem of
Nakajima--Takahasi~\cite[Theorem~1.4]{Nakajima2025}. In dimension $d=1$, they they showed that if
$S\subset\N$ is infinite with polynomial density of exponent $\alpha\ge1$ and
$A\subset S$ satisfies $\dbar(A\mid S)>0$, then there exists a set
$E_{S,A}\subset \mathcal E_{\N}^{\mathrm{vec}}$ with
\[
  \{a_n(x):n\ge1\}\subset S\ \text{ for all }x\in E_{S,A},
  \qquad
  \dim_H E_{S,A}
  =
  \dim_H\bigl(\mathcal E_S^{\mathrm{vec}}\bigr)
  =
  \frac{1}{2\alpha},
\]
such that the following two statements hold.
\begin{enumerate}[label=\textup{(\roman*)}, leftmargin=2em]
\item
\[
  \dbar\Bigl(
    \bigcup_{n\ge1}\ \bigcap_{x\in E_{S,A}}\{a_n(x)\}\cap A
    \ \bigm|\ S
  \Bigr)
  =
  \dbar(A\mid S);
\]
\item
\[
  \dim_H\Bigl\{
    x\in \mathcal E_{\N}^{\mathrm{vec}}:\ 
    \{a_n(x):n\ge1\}\subset S,\ 
    \dbar(\{a_n(x):n\ge1\}\cap A\mid S)=\dbar(A\mid S)
  \Bigr\}
  =
  \dim_H\bigl(\mathcal E_S^{\mathrm{vec}}\bigr).
\]
\end{enumerate}

We now state an $\N^d$-analogue of this relative fractal transference principle.

\begin{theorem}[Relative fractal transference on $\N^d$]\label{thm:Zdd-relative}
Let $S\subset\mathcal C_K\subset\N^d$ be infinite with polynomial density of exponent
$\alpha\ge1$.  For any $A\subset S$ with $\dbar(A\mid S)>0$, there exists a set
$E_{S,A}\subset \mathcal E_{S}^{\mathrm{vec}}$ such that
\[
  \dim_H E_{S,A}
  =
  \dim_H\bigl(\mathcal E_S^{\mathrm{vec}}\bigr)
  =
  \frac{d}{2\alpha},
\]
and
\[
  \dbar\Bigl(
    \bigcup_{n\ge1}\ \bigcap_{\bx\in E_{S,A}}\{\ba_n(\bx)\}\cap A
    \ \Bigm|\ S
  \Bigr)
  =
  \dbar(A\mid S),
\]
and moreover
\[
  \dim_H\Bigl\{
    \bx\in \mathcal E_S^{\mathrm{vec}}:\ 
    \dbar\bigl(D(\bx)\cap A\mid S\bigr)=\dbar(A\mid S)
  \Bigr\}
  =
  \dim_H\bigl(\mathcal E_S^{\mathrm{vec}}\bigr).
\]
\end{theorem}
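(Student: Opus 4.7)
The plan is to adapt the surgery--and--elimination paradigm of Nakajima--Takahasi~\cite{Nakajima2025} to the anisotropic $\N^d$ setting, leveraging the general dimension tools (the singular value potential $\phi^s$ and the multivariate Dirichlet series $\zeta_S$) to pin down the ambient dimension first. Since $S\subset\mathcal{C}_K$, one has $\phi^s(\ba)\asymp_K (\prod_j a_j)^{-2s/d}$, and a direct Abel summation against the counting hypothesis $|S\cap Q_N|\asymp N^{d/\alpha}/(\log N)^\beta$ yields $\Lambda_S=d/\alpha$ and $s_\ast=\tfrac12\Lambda_S=d/(2\alpha)$. Combined with the general upper bound $\dim_H(\mathcal{E}_S)\le s_\ast$, this gives $\dim_H(\mathcal{E}_S^{\mathrm{vec}})\le d/(2\alpha)$.

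\textbf{Cantor construction.} Choose scales $N_k\uparrow\infty$ with $|A\cap Q_{N_k}|/|S\cap Q_{N_k}|\to\dbar(A\mid S)$, and enumerate $A$ block by block along these scales as $\bv_1,\bv_2,\dots$. Pick a very lacunary schedule of time slots $n_1<n_2<\dots$ and declare that $\ba_{n_k}(\bx)=\bv_k$ for every $\bx\in E_{S,A}$. At the remaining time slots, branch over a pool of fresh digit vectors drawn from $S\setminus\{\bv_1,\dots,\bv_k\}$, chosen to enforce coordinatewise escape, vector-injectivity, and ambient membership $\ba_n(\bx)\in S$. The resulting set $E_{S,A}\subset\mathcal{E}_S^{\mathrm{vec}}$ satisfies $\bigcap_{\bx\in E_{S,A}}\{\ba_{n_k}(\bx)\}=\{\bv_k\}$, so $\bigcup_n\bigcap_{\bx\in E_{S,A}}\{\ba_n(\bx)\}\supset A$, and the uniform density identity $\dbar(\cdot\mid S)=\dbar(A\mid S)$ follows (the reverse inequality is tautological since the common digit lies in $S$).

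\textbf{Dimension lower bound and pointwise statement.} Place on $E_{S,A}$ the Bernoulli-type measure $\mu$ that is uniform over the branches at each depth. By $K$-balance the cylinder of a single digit vector $\ba\in S$ has diameter comparable to $(\prod_j a_j)^{-2/d}$, and the contributions of the $n_k$-slots are negligible because $\{n_k\}$ is lacunary. A standard mass-distribution argument then reduces the lower bound to showing that for every $s<d/(2\alpha)$ the partial sums $\sum_{\ba\in S\cap Q_{N_k}}(\prod_j a_j)^{-2s/d}$ diverge as $k\to\infty$; this follows from the counting hypothesis via Abel summation. Thus $\dim_H E_{S,A}\ge d/(2\alpha)$. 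For the pointwise statement, observe that every $\bx\in E_{S,A}$ already has $D(\bx)\supset A$ by construction, so $D(\bx)\cap A=A$ and $\dbar(D(\bx)\cap A\mid S)=\dbar(A\mid S)$ holds for each individual $\bx$; hence the pointwise set contains $E_{S,A}$ and inherits its dimension.

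\textbf{Main obstacle.} The most delicate ingredient is the matching lower bound. Without the uniform $K$-balance hypothesis, the singular value potential $\phi^s$ off $\mathcal{C}_K$ can exhibit McMullen--carpet-type anisotropy that forces $s_\ast<\Lambda_S/2$ strictly, and the lower-bound construction would then require multi-scale coverings rather than the single-scale Bernoulli measure used above. The hypothesis $S\subset\mathcal{C}_K$ is exactly what collapses this anisotropy to a single scale per level, at which point the mass-distribution principle closes. A secondary difficulty is ensuring that the branching pool within $S\setminus\{\bv_1,\dots,\bv_k\}$ remains rich enough at every scale to achieve the full $d/(2\alpha)$ dimension; here one uses that $A$ is removed only from a density-$\dbar(A\mid S)$ fraction of $S$, leaving a residue of the same polynomial growth exponent $d/\alpha$ from which to branch.
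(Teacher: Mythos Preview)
Your upper bound via $s_\ast=\tfrac12\Lambda_S=d/(2\alpha)$ is correct and matches the paper. Your density bookkeeping is also sound, and in fact simpler than the paper's: by enumerating \emph{all} of $A$ as forced digits you get $D(\bx)\supset A$ directly, whereas the paper inserts only finite blocks $W_k\subset A\setminus S^\ast$ in carefully chosen windows and then invokes a separate density-recovery lemma (Lemma~\ref{lem:density-recovery}) to show $\dbar(\bigcup_k W_k\mid S)=\dbar(A\mid S)$.

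The genuine gap is in the dimension lower bound. You write that ``a standard mass-distribution argument then reduces the lower bound to showing that for every $s<d/(2\alpha)$ the partial sums $\sum_{\ba\in S\cap Q_{N_k}}(\prod_j a_j)^{-2s/d}$ diverge.'' This is not correct: divergence of that series is a pressure statement equivalent to $s\le s_\ast$, but it does not by itself produce a Frostman measure on a subset of $\mathcal E_S^{\mathrm{vec}}$. To run a mass-distribution argument you must specify, at each non-forced slot $n$, a \emph{finite} branching pool $P_n\subset S$ together with quantitative control on $|P_n|$, on the cylinder side lengths, and on sibling separation; only then can you verify $\mu(B(x,r))\ll r^s$. ``Branch over $S\setminus\{\bv_1,\dots,\bv_k\}$'' is an infinite alphabet at each level and does not yield a well-defined Bernoulli measure. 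You have also not explained why the forced slots are negligible: saying $\{n_k\}$ is lacunary controls how \emph{many} forced slots there are, but not the \emph{size} of the forced digits $\bv_k$ relative to the branching digits at neighbouring slots, and it is the latter that governs the distortion of cylinder diameters.

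The paper handles exactly these issues by a three-stage architecture that your sketch bypasses. First it extracts a thin $2$-separated subset $S^\ast\subset S$ with $\dbar(S^\ast\mid S)=0$ (Lemma~\ref{lem:thin-subset-Zd}), then builds an explicit Moran seed set $R_{t,L}(S^\ast)$ using annular pools $S^\ast\cap\{t^n\le\|\bv\|_\infty<Lt^n\}$ and proves $\dim_H R_{t,L}(S^\ast)=d/(2\alpha)$ via the Moran formula (Lemma~\ref{lem:moran-lb-nd}, Proposition~\ref{prop:extreme-seed}). Only then does it insert finite blocks from $A\setminus S^\ast$, and the dimension is preserved not by a direct mass-distribution on the final set but by showing that the elimination map (deleting the inserted blocks) is H\"older with every exponent $\gamma<1$ (Theorem~\ref{thm:elim-almost-lip}); this is where the growth conditions \eqref{growth0}--\eqref{growth4} enter, precisely quantifying when the inserted digits are small enough not to distort cylinder geometry. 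Your direct approach could in principle be made to work, but you would need to specify annular branching pools, prove a Moran lower bound for the resulting tree (with $r_{n_k}=1$ at forced slots), and verify that the forced digit sizes $\|\bv_k\|$ are compatible with the seed scales $t^n$ near slot $n_k$ --- which is essentially reconstructing the paper's machinery.
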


\begin{remark}
When $d=1$ the balance condition is automatic, so Theorem~\ref{thm:Zdd-relative}
recovers Nakajima--Takahasi~\cite[Theorem~1.4]{Nakajima2025}.
\end{remark}

The following examples are included only to indicate why we impose the balance condition:
without $S\subset\mathcal C_K$, different coordinates may contract at different rates, and
dimension can become sensitive to directional distribution rather than a single counting
exponent.

\begin{remark}
This type of obstruction already appears in linear diagonal self-affine models; see
McMullen~\cite{McMullen1984}, where the Hausdorff dimension is computed for planar
self-affine carpets (often called \emph{Bedford--McMullen carpets}).  Fix integers
$n>m\ge2$ and a digit set $D\subset\{0,\ldots,n-1\}\times\{0,\ldots,m-1\}$.  For
$(i,j)\in D$ define $f_{i,j}(x,y)=((x+i)/n,(y+j)/m)$ and let $K_D\subset[0,1]^2$ be the
attractor $K_D=\bigcup_{(i,j)\in D} f_{i,j}(K_D)$.  When $n\neq m$ the contraction rates
differ, and $\dim_H K_D$ is not determined by $\#D$ alone: it depends on the row counts
$t_j=\#\{\,i:(i,j)\in D\,\}$; see \cite[\S1, Theorem]{McMullen1984}.  In particular, two
carpets with the same $\#D$ can have different Hausdorff dimensions because their digits
are distributed differently.

Although our sets come from continued fractions (Gauss map) rather than an affine IFS,
the same mechanism can occur: without $S\subset\mathcal C_K$, different coordinates may
contract at incompatible rates at the same depth, so a single growth exponent such as
$|S\cap Q_N|$ may not capture the cylinder geometry sharply enough for our one-scale
covering and seed bounds to match.

\end{remark}

See also Lalley--Gatzouras~\cite{LalleyGatzouras1992} and the affinity-dimension framework
of Falconer~\cite{Falconer1988} and Hueter--Lalley~\cite{HueterLalley1995} for related
non-conformal phenomena.  For a survey perspective, see Fraser~\cite{Fraser2021}, where it
is stressed that even for these diagonal linear models the Hausdorff-dimension upper bound
typically uses delicate multi-scale coverings and depends on how the allowed rectangles are
distributed.

\begin{remark}
A further comparison is Bara\'nski~\cite{Baranski2007}, who studies planar
\emph{rectangle-like constructions} extending the carpet setting.  One has nested compact
sets $\Delta_{\varepsilon_1,\ldots,\varepsilon_n}\subset\R^2$ ($\varepsilon_i\in\{1,\ldots,M\}$)
with $\Delta_{\varepsilon_1,\ldots,\varepsilon_n,\varepsilon_{n+1}}\subset
\Delta_{\varepsilon_1,\ldots,\varepsilon_n}$ and $\diam(\Delta_{\varepsilon_1,\ldots,\varepsilon_n})\to0$,
and the limit set
\[
\Lambda=\bigcap_{n\ge1}\ \bigcup_{\varepsilon_1,\ldots,\varepsilon_n}
\Delta_{\varepsilon_1,\ldots,\varepsilon_n}.
\]
In the anisotropic case the dimension formulas already require optimisation:
$\dim_H\Lambda=\max_{q\in S} g(q)$ \cite[Theorem~A]{Baranski2007}, where $g$ is an explicit
continuous entropy/Lyapunov-type function depending on the directional distribution of symbols,
and $\dim_B\Lambda=\max(D_A,D_B)$ \cite[Theorem~B]{Baranski2007}, where $D_A,D_B$ are the two
box-dimension candidates given by pressure-type equations for the horizontal/vertical scales
(hence depending on $(a_i),(b_j)$ and the digit pattern).  In particular, $\dim_H\Lambda$ and
$\dim_B\Lambda$ may differ.

Although our cylinders come from the non-linear Gauss map rather than an affine IFS, the same
issue is relevant: without balance, different coordinates can produce different effective scales
at the same depth, so a single counting exponent (e.g.\ for $|S\cap Q_N|$) may be insufficient
for sharp one-scale covering bounds.
\end{remark}

  See also Peres--Shmerkin~\cite{PeresShmerkin2009} and Hochman--Shmerkin~\cite{HochmanShmerkin2012}
for resonance/overlap effects in product-type settings, and Jenkinson--Pollicott~\cite{JenkinsonPollicott2001}
and Hensley~\cite{Hensley2012} for transfer-operator approaches in continued-fraction dimension problems.

\subsection{Hausdorff dimension of digit-restricted sets}
\label{subsec:hausdorff-dim-digit}
Finally, we record our dimension estimates for digit-restricted limit sets. They give a universal upper bound and, under mild arithmetic hypotheses on $S\subset\N^d$, sharp bounds expressed in terms of a singular-value potential.

Let $S\subset\N^d$ and define
\begin{equation}\label{eq:ES-tilde-def}
  \widetilde{\mathcal E}_S
  :=
  \{\mathbf x\in(0,1)^d:\ \mathbf a_n(\mathbf x)\in S\ \forall n\ge 1\},
  \qquad
  \mathcal E_S
  :=
  \{\mathbf x\in\widetilde{\mathcal E}_S:\ a_n(x_j)\to\infty\ (1\le j\le d)\}.
\end{equation}
We also recall the vector-injective subregime
\[
  \mathcal E_S^{\mathrm{vec}}
  :=
  \Bigl\{\mathbf x\in\widetilde{\mathcal E}_S:\ 
  \mathbf a_n(\mathbf x)\neq \mathbf a_m(\mathbf x)\ (n\neq m),\ 
  a_n(x_j)\to\infty\ (1\le j\le d)\Bigr\}.
\]
Accordingly, all dimension estimates are tail statements.
Since modifying finitely many initial digit vectors does not change Hausdorff dimension
(Lemma~\ref{lem:delete-finite}), the estimates are governed by the tail geometry of
cylinders.

To implement the covering argument in Section~\ref{sec:upper-bound}, we encode the anisotropic cylinder geometry by a singular-value potential. For
$\mathbf a=(a_1,\dots,a_d)\in\N^d$ let $a_{(1)}\le\cdots\le a_{(d)}$ denote the
nondecreasing rearrangement. For $s\in(0,d]$ and $k<s\le k+1$ we set
\[
  \phi^s(\mathbf a)
  :=
  \Bigl(\prod_{j=1}^k a_{(j)}^{-2}\Bigr)\, a_{(k+1)}^{-2(s-k)}.
\]
This potential yields a cylinder covering criterion and the associated critical threshold
\[
  s_\ast
  :=
  \inf\Bigl\{s>0:\ \sum_{\mathbf a\in S}\phi^s(\mathbf a)<\infty\Bigr\},
\]
in particular, $s_\ast$ is the critical exponent governing our upper bounds.
We refer to Section~\ref{sec:upper-bound} for the covering argument and its equivalent
Dirichlet-series formulation.

\begin{theorem}[Universal upper bound and sharpness]\label{thm:intro-universal}
For every infinite $S\subset\N^d$, one has $\dim_H(\mathcal E_S)\le d/2$.
If moreover $\dbar(S)>0$, then $\dim_H(\mathcal E_S)=d/2$.
\end{theorem}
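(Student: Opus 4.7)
The tautological inclusion $\mathcal E_S\subset\mathcal E_{\N^d}=E_\infty^d$, where $E_\infty:=\{x\in(0,1):a_n(x)\to\infty\}$ is Good's escape set, reduces the problem to showing $\dim_H(E_\infty^d)\le d/2$. My plan is to decompose $E_\infty=\bigcup_{M\ge 2}E_\infty^{(M)}$ with $E_\infty^{(M)}:=\{x:a_n(x)\ge M\text{ for all sufficiently large }n\}$ and to realise each $E_\infty^{(M)}$ as a countable union (over initial words $w$, via M\"obius maps $z\mapsto[w;z]$) of bi-Lipschitz copies of the Gauss attractor $F_M:=\{x:a_n(x)\ge M\ \forall n\}$. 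By Mauldin--Urbanski theory \cite{MauldinUrbanski1999,MUbook}, $F_M$ is the limit set of the infinite-alphabet conformal IFS $\{z\mapsto 1/(a+z):a\ge M\}$, with matching Hausdorff and upper box dimensions $\dim_H F_M=\overline{\dim}_B F_M=:s_M\searrow 1/2$. Writing $E_\infty^d\subset\prod_j E_\infty^{(M_j)}$ and expanding the right-hand side as a countable union of bi-Lipschitz images of $\prod_j F_{M_j}$, the product formula for sets of matching Hausdorff/upper box dimension (Lemma~\ref{lem:marstrand-product}, cf.\ Marstrand~\cite{Marstrand1954}) gives $\dim_H\prod_j F_{M_j}=\sum_j s_{M_j}$, hence $\dim_H(E_\infty^d)\le\sum_j s_{M_j}$; letting each $M_j\to\infty$ forces the bound $\le d/2$.

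\textbf{Lower bound assuming $\dbar(S)>0$.} I would construct an explicit Moran-type subset $\Lambda\subset\mathcal E_S$ with $\dim_H\Lambda\ge d/2$. Set $\delta:=\dbar(S)/2$ and choose $N_k\to\infty$ with $|S\cap\Qn{N_k}|\ge 2\delta N_k^d$. Fix $K=K(\delta,d)$ large enough that the ``generalised-balanced cone'' $\bigcup_{\ell\ge 0}[K^\ell,K^{\ell+1}]^d$ has complement in $\Qn{N_k}$ of volume at most $\delta N_k^d$; this forces $|S\cap\bigcup_\ell B_\ell|\ge\delta N_k^d$, where $B_\ell:=[K^\ell,K^{\ell+1}]^d\cap\Qn{N_k}$. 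A two-level pigeonhole over the $O(\log N_k)$ such cubes then extracts indices $\ell_k$ with
\[
  L_k:=K^{\ell_k}\ \ge\ \frac{N_k}{(\log N_k)^{O(1)}}\to\infty,
  \qquad
  |S\cap B_{\ell_k}|\ \ge\ c\,\frac{L_k^d}{\log L_k}
\]
for some $c=c(\delta,K,d)>0$. I iterate: at stage $k$ append one digit vector from $A_k:=S\cap B_{\ell_k}$, producing a Moran-like limit set $\Lambda\subset\mathcal E_S$ (all digit vectors lie in $S$, and $L_k\to\infty$ enforces $a_n(x_j)\to\infty$ in every coordinate). Since each $B_{\ell_k}$ is $K$-balanced, the stage-$k$ cylinders are quasi-cubes of side $\asymp\prod_{j\le k}L_j^{-2}$; placing the uniform product measure on $\Lambda$ and invoking Frostman's lemma, the lower pointwise dimension satisfies
\[
  \liminf_{k\to\infty}\frac{\sum_{j\le k}\log|A_j|}{2\sum_{j\le k}\log L_j}\ \ge\ \liminf_{k\to\infty}\frac{d\sum_{j\le k}\log L_j-O(k\log\log L_k)}{2\sum_{j\le k}\log L_j}\ =\ \frac{d}{2},
\]
provided the $L_k$ are chosen to grow rapidly enough to absorb the $\log\log$ error, yielding $\dim_H\Lambda\ge d/2$.

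\textbf{Main obstacle.} The crucial technical point lies in the upper bound: the naive Marstrand-type estimate $\dim_H(A\times B)\le\dim_H A+\overline{\dim}_B B$ is useless here, since $E_\infty^{(M)}$ is dense in $(0,1)$ and hence has upper box dimension $1$. The fix is to exploit the \emph{countable-union} structure of $E_\infty^{(M)}$ and apply the product formula to the compact conformal pieces $F_M$, where Hausdorff and upper box dimensions coincide by the nontrivial infinite-alphabet Mauldin--Urbanski theory; the pointwise $s_\ast$-bound from Section~\ref{sec:upper-bound} is also too weak for this purpose, since already $s_\ast(\N^d)>d/2$ in general. In the lower bound, the secondary subtlety is that $S$ may concentrate in highly non-balanced regions of $\N^d$ (so that no balanced sub-alphabet of positive density exists), forcing one to locate balanced scaling cubes with \emph{both} large scale and sufficiently many elements of $S$; the two-level pigeonhole (across balanced cones first, then across dyadic scales within) handles this, but one must verify that the selected scale $L_k$ actually diverges.
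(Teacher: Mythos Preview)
Your upper bound is essentially the paper's argument: both reduce to $\mathcal E_S\subset E_\infty^d$, write $E_\infty$ as a countable union of bi-Lipschitz copies of $F_M=E_{\ge M}$, invoke Mauldin--Urba\'nski to get $\dim_H F_M=\dim_P F_M=s_M\to\tfrac12$, and apply the product inequality $\dim_H(A\times B)\le\dim_H A+\dim_P B$ to $F_M^d$. The paper packages this as Theorem~\ref{thm:full-restriction} plus Lemma~\ref{lem:delete-finite}; your identification of the obstacle (that $E_\infty$ itself is dense, so one must pass to the compact pieces $F_M$) is exactly right.

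Your lower bound has a genuine gap. The displayed ratio $\frac{\sum_{j\le k}\log|A_j|}{2\sum_{j\le k}\log L_j}$ is the pointwise dimension only along the discrete sequence of radii $r_k\asymp\prod_{j\le k}L_j^{-2}$. For $r$ just above the level-$(k{+}1)$ scale, the ball $B(x,r)$ still meets only $O(1)$ level-$(k{+}1)$ cylinders, so $\mu(B(x,r))\asymp\prod_{j\le k+1}|A_j|^{-1}$ while $-\log r$ has already picked up the extra $2\log L_{k+1}$; the resulting ratio is
\[
  \frac{d\sum_{j\le k}\log L_j}{2\sum_{j\le k}\log L_j+2\log L_{k+1}},
\]
which tends to $d/2$ only when $\log L_{k+1}=o\bigl(\sum_{j\le k}\log L_j\bigr)$. (This is precisely the mismatch $r_1\cdots r_{n-1}$ versus $\delta_n$ in Lemma~\ref{lem:moran-lb-nd}.) Your pigeonhole gives no control on $\log L_{k+1}/\sum_{j\le k}\log L_j$: if the density-witnessing scales $N_k$ are very lacunary, the bound can degrade to $d/3$ or less. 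In particular, choosing the $L_k$ to ``grow rapidly'' makes this \emph{worse}, not better --- the dominant error is $\log L_{k+1}$ in the denominator, not the $\log\log$ term you flagged.

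The fix is standard and close in spirit to the paper's route: once a balanced cube $B_{\ell_k}$ with $|S\cap B_{\ell_k}|\ge cL_k^d$ has been found, use it as the digit alphabet for $m_k$ \emph{consecutive} levels (not just one), choosing $m_k$ after $L_{k+1}$ is known so that $m_k\log L_k\gg\log L_{k+1}$; this damps the inter-block jump and restores $d/2$, while coordinatewise divergence survives because each block has finite length and $L_k\to\infty$. The paper instead first passes to the balanced subset $S\cap\mathcal C_K$ of positive upper density (Lemma~\ref{lem:balanced-density}) and then runs its seed construction (Proposition~\ref{prop:extreme-seed}) at \emph{geometric} scales $t^n$, for which $\log(t^{n+1})/\sum_{j\le n}\log(t^j)\to 0$ automatically.
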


\begin{corollary}[Polynomial growth in the balanced regime]\label{cor:intro-growth}
Assume $S\subset\mathcal C_K$ for some $K\ge 1$ and $\#(S\cap Q_N)\ll N^{d/\alpha}$ for some
$\alpha\ge 1$. Then $\dim_H(\mathcal E_S)\le d/(2\alpha)$.
\end{corollary}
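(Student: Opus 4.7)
The plan is to derive the corollary from the $\phi^s$-based universal upper bound $\dim_H(\mathcal E_S)\le s_\ast$ (the covering criterion developed in Section~\ref{sec:upper-bound}). The balance hypothesis $S\subset\mathcal C_K$ collapses the anisotropic potential $\phi^s$ onto a single scale, and the growth bound $|S\cap Q_N|\ll N^{d/\alpha}$ then feeds into a one-variable Abel summation that pins down the critical exponent.

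First I would observe that, for $\mathbf a\in\mathcal C_K$, every ordered coordinate $a_{(j)}$ lies in $[K^{-1}\|\mathbf a\|_\infty,\|\mathbf a\|_\infty]$, where $\|\mathbf a\|_\infty:=\max_j a_j$. For $s\in(0,d]$ with $k<s\le k+1$, this gives
\[
  \phi^s(\mathbf a)
  =\Bigl(\prod_{j=1}^k a_{(j)}^{-2}\Bigr)\,a_{(k+1)}^{-2(s-k)}
  \asymp_{K,s}\|\mathbf a\|_\infty^{-2s},
\]
uniformly in $\mathbf a\in S$. Next I would sum by layers $N=\|\mathbf a\|_\infty$: since $|\{\mathbf a\in S:\|\mathbf a\|_\infty=N\}|=|S\cap Q_N|-|S\cap Q_{N-1}|$, Abel summation yields
\[
  \sum_{\mathbf a\in S}\phi^s(\mathbf a)
  \asymp_{K,s}\sum_{N\ge 1}\frac{|S\cap Q_N|-|S\cap Q_{N-1}|}{N^{2s}}
  \ll \sum_{N\ge 1}\frac{|S\cap Q_N|}{N^{2s+1}}
  \ll \sum_{N\ge 1}N^{d/\alpha-2s-1},
\]
which converges whenever $2s>d/\alpha$. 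Hence $s_\ast\le d/(2\alpha)$, and the corollary follows from $\dim_H(\mathcal E_S)\le s_\ast$. Equivalently, one can package this via the Dirichlet series $\zeta_S$ in the diagonal direction $\boldsymbol\sigma=\sigma(1,\dots,1)$, using $s_\ast=\tfrac12\Lambda_S$ in the $K$-balanced regime together with $\Lambda_S\le d/\alpha$.

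The main point is that there is no genuine obstacle once the covering criterion and the balance hypothesis are both in hand, and this is by design: the role of $S\subset\mathcal C_K$ is precisely to force $\phi^s$ to be captured by a single scale, so that the anisotropic covering bound reduces to a scalar tail estimate. Without balance the same computation fails — different coordinates contract at incompatible rates, $\phi^s(\mathbf a)$ is no longer comparable to a single power of $\|\mathbf a\|_\infty$, and one has to optimise over the distribution of the ordered coordinates, as in the McMullen/Bara\'nski phenomena highlighted in the remarks after Theorem~\ref{thm:Zdd-relative}. That is why balance enters as a hypothesis rather than as a conclusion.
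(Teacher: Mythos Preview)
Your proof is correct and follows essentially the same approach as the paper: both arguments use the balance hypothesis to collapse $\phi^s(\mathbf a)$ (equivalently, the diagonal $\zeta_S$) onto a single power of $\|\mathbf a\|_\infty$, then feed in the growth bound via a sum over scales to conclude $s_\ast\le d/(2\alpha)$. The paper routes this through $\zeta_S$ and dyadic shells (invoking $s_\ast=\tfrac12\Lambda_S$ from Theorem~\ref{thm:zeta-vs-singular}(4)), whereas you compute $\Phi_S(s)$ directly by Abel summation over layers; these are cosmetic variants of the same argument, and you even note the $\zeta_S$ packaging explicitly.
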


\begin{corollary}[Vector-injective subregime]\label{cor:intro-vec}
Let $S\subset\N^d$ be infinite and recall
\[
  \mathcal E_S^{\mathrm{vec}}
  :=
  \{\mathbf x\in\mathcal E_S:\ \mathbf a_n(\mathbf x)\neq \mathbf a_m(\mathbf x)\ \text{for all }n\neq m\}.
\]
\begin{itemize}[leftmargin=2em]
  \item[(a)] If $\dbar(S)>0$, then $\dim_H(\mathcal E_S^{\mathrm{vec}})=d/2$.
  \item[(b)] If $S\subset\mathcal C_K$ is uniformly $K$--balanced and has polynomial density
  of exponent $\alpha\ge 1$, then $\dim_H(\mathcal E_S^{\mathrm{vec}})=\dim_H(\mathcal E_S)=d/(2\alpha)$.
\end{itemize}
\end{corollary}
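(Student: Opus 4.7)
The plan is to reduce both parts to previously stated results, using the trivial inclusion $\mathcal{E}_S^{\mathrm{vec}}\subset\mathcal{E}_S$ together with Theorem~\ref{thm:intro-universal}, Corollary~\ref{cor:intro-growth}, and Theorem~\ref{thm:Zdd-relative}.

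\textbf{Part (b)} is essentially a two-step consequence of the previous results. Under the balance and polynomial-density hypotheses, Corollary~\ref{cor:intro-growth} gives $\dim_H(\mathcal{E}_S)\le d/(2\alpha)$, and the inclusion $\mathcal{E}_S^{\mathrm{vec}}\subset\mathcal{E}_S$ propagates this bound. For the matching lower bound, I would apply Theorem~\ref{thm:Zdd-relative} with the choice $A:=S$, so that $\dbar(A\mid S)=1>0$; the theorem then directly asserts $\dim_H(\mathcal{E}_S^{\mathrm{vec}})=d/(2\alpha)$. Sandwiching $\mathcal{E}_S^{\mathrm{vec}}\subset\mathcal{E}_S$ forces $\dim_H(\mathcal{E}_S)=d/(2\alpha)$ as well, yielding both claimed equalities.

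\textbf{Part (a)} upper bound is immediate: $\mathcal{E}_S^{\mathrm{vec}}\subset\mathcal{E}_S$ and the universal bound of Theorem~\ref{thm:intro-universal} gives $\dim_H(\mathcal{E}_S^{\mathrm{vec}})\le d/2$. For the matching lower bound, I would adapt the sharpness half of Theorem~\ref{thm:intro-universal}, which, under $\dbar(S)>0$, produces a multi-scale Cantor-like subset of $\mathcal{E}_S$ of Hausdorff dimension $d/2$ by selecting digit vectors at stage $k$ from $S\cap Q_{N_k}$ along a subsequence $N_k\to\infty$ with $|S\cap Q_{N_k}|\ge c\,N_k^d$. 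To upgrade the output into $\mathcal{E}_S^{\mathrm{vec}}$, I would enforce at stage $k$ the additional constraint that the chosen digit vectors be distinct from the finitely many vectors used at stages $1,\dots,k-1$. This removes only $O_k(1)$ candidates from a pool of size $\asymp N_k^d$, so the counting and mass-distribution estimates underpinning the $d/2$ lower bound go through verbatim, producing a subset of $\mathcal{E}_S^{\mathrm{vec}}$ of Hausdorff dimension $d/2$.

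The main obstacle I anticipate is the inductive book-keeping for vector-injectivity, which is genuinely non-local in time and must be shown to remain compatible with the branching-ratio and cylinder-length estimates of the sharpness proof of Theorem~\ref{thm:intro-universal}. However, because the exclusion at stage $k$ has fixed size $O_k(1)$ while the per-scale candidate pool grows polynomially in $N_k$, the exclusion is asymptotically negligible and the only overhead is notational; the Hausdorff-dimension output is unchanged.
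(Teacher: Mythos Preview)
Your reductions are sound, but for part~(a) you are working harder than the paper does, and your sketch of the lower-bound construction is imprecise in a way worth flagging.

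In the paper, the lower bound under $\dbar(S)>0$ is obtained via the seed-set construction of Proposition~\ref{prop:extreme-seed}: one first passes to a balanced subset $S_K=S\cap\mathcal C_K$ with $\dbar(S_K)>0$ (Lemma~\ref{lem:balanced-density}), and then builds $R_{t,L}(S^\ast)$ whose level-$n$ digit vectors are drawn from the \emph{annulus} $\{t^n\le\|\mathbf v\|_\infty<Lt^n\}$, not from the cube $Q_{N_n}$. Since $L<t$, these annuli are pairwise disjoint, so $\mathbf a_m(\mathbf x)\ne\mathbf a_n(\mathbf x)$ for $m\ne n$ holds automatically and $R_{t,L}(S^\ast)\subset\mathcal E_S^{\mathrm{vec}}$ without any exclusion step. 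Thus part~(a) is literally Theorem~\ref{thm:ubd-main}. Your manual-exclusion scheme could be made to work, but two points need care: the set to exclude at stage $k$ is not ``$O_k(1)$'' in the usual sense but the union $\bigcup_{j<k}A_j$ of all earlier level sets (which has size $\sum_{j<k}|A_j|$, negligible only because the scales grow fast); and selecting from cubes $S\cap Q_{N_k}$ does not by itself enforce the escaping condition $a_n(x_j)\to\infty$, so you would need an annular constraint anyway---at which point injectivity becomes free.

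For part~(b), your invocation of Theorem~\ref{thm:Zdd-relative} with $A=S$ is valid and not circular, but it is heavier than needed: the paper just cites Proposition~\ref{prop:extreme-seed} directly for the lower bound (its output already lies in $\mathcal E_S^{\mathrm{vec}}$ in the balanced case), together with Corollary~\ref{cor:growth-balanced} for the matching upper bound.
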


The first statement of Theorem~\ref{thm:intro-universal} is a universal upper bound, and
the second shows sharpness in the positive-density case.

We note that the results proved in Section~\ref{sec:upper-bound} already imply the
statements recorded here, and in fact yield more general conclusions. In particular,
Theorem~\ref{thm:intro-universal} is the bound-and-sharpness case of
Theorem~\ref{thm:ubd-main}, while Corollary~\ref{cor:intro-growth} is a restatement of
Corollary~\ref{cor:growth-balanced}. In the uniformly $K$--balanced setting, polynomial
growth of $S$ translates into a corresponding drop in dimension. Finally,
Corollary~\ref{cor:intro-vec} combines these statements with the observation that our
lower-bound constructions may be arranged within the vector-injective subregime
$\mathcal E_S^{\mathrm{vec}}$.

\subsection{Organization of the paper}

Section~\ref{sec:prelim} collects the basic tools used throughout the paper.
In Subsection~\ref{subsec:cf-facts} we recall the required continued--fraction
estimates and the geometry of coordinatewise cylinders in $(0,1)^d$.
Subsection~\ref{subsec:density} fixes notation for upper density and upper Banach
density on $\N^d$, together with the relative versions used later.
Subsection~\ref{subsec:almost-lip} records the almost--Lipschitz / H\"older
stability statements for elimination maps, which are the quantitative input for
the surgery step.

Section~\ref{sec:upper-bound} develops the covering framework for digit--restricted
sets in $\N^d$.  We formulate the singular--value potential and the associated
Dirichlet--series criteria, and prove the universal upper bounds and their
sharpness statements, culminating in Theorems~\ref{thm:ubd-main}
and~\ref{thm:zeta-vs-singular} (together with the polynomial--growth consequences).

Section~\ref{sec:moran} develops higher--dimensional Moran constructions for
coordinatewise continued--fraction cylinders, providing the model families and
lower--bound mechanisms used later, compatible with coordinatewise divergence and, when
needed, vector--injectivity.

Section~\ref{sec:three-stage} implements the three--stage construction on $\N^d$.
Subsection~\ref{subsec:step1} produces a relatively thin subset with good bookkeeping
properties; Subsection~\ref{subsec:step2} constructs extreme seed sets in the complement
and computes their dimension; Subsection~\ref{subsec:step3} performs the digit insertion
and records the elimination estimates that transfer density information into digit sets
without loss of Hausdorff dimension.

Finally, Section~\ref{sec:proofs} assembles these ingredients to prove the main
dimension theorems and the transference principles stated in the introduction,
including the positive--density and positive--Banach--density regimes.

\section{Preliminaries}\label{sec:prelim}

This section gathers several technical lemmas that will be used repeatedly in the
subsequent proofs.  Their role is to provide quantitative control over density notions
on $\mathbb N^d$, as well as the basic geometric estimates for coordinatewise
continued--fraction cylinders.

Compared with the one--dimensional setting, additional care is required in higher
dimensions to handle translations of density boxes, anisotropic growth of digit
vectors, and the fact that the geometry of multidimensional cylinders is governed by
the largest coordinate scale.  The results collected here allow us to reduce to
well--positioned and uniformly balanced regimes, and to carry out the later
constructions without losing control of density or Hausdorff dimension.

\subsection{Some facts on continued fractions}
\label{subsec:cf-facts}

We briefly recall a few standard facts about regular continued fractions that
will be used throughout the paper. For each $n \in \mathbb{N}$ and $(a_1, \ldots, a_n) \in \mathbb{N}^n$, define the convergents $p_n/q_n$ recursively by:
\[
\begin{aligned}
p_{-1} &= 1, \quad p_0 = 0, \quad p_i = a_i p_{i-1} + p_{i-2} \quad \text{for } i=1, \ldots, n, \\
q_{-1} &= 0, \quad q_0 = 1, \quad q_i = a_i q_{i-1} + q_{i-2} \quad \text{for } i=1, \ldots, n.
\end{aligned}
\]

The $n$-th fundamental interval is defined by:
\[
I(a_1, \ldots, a_n) = 
\begin{cases}
\left[\dfrac{p_n}{q_n}, \dfrac{p_n+p_{n-1}}{q_n+q_{n-1}}\right) & \text{if } n \text{ is even}, \\[10pt]
\left(\dfrac{p_n+p_{n-1}}{q_n+q_{n-1}}, \dfrac{p_n}{q_n}\right] & \text{if } n \text{ is odd}.
\end{cases}
\]

This interval consists of all $x \in (0,1)$ whose regular continued fraction expansion begins with $a_1, \ldots, a_n$.

The following classical estimates provide control over the geometry of these intervals.
Both estimates follow from standard arguments in continued fraction theory.

\begin{lemma}\label{lem:1d_geometry} 
For every $n\in\N$ and $(a_1,\dots,a_n)\in\N^n$, the following hold.
\begin{enumerate}[label=(\roman*)]
\item
\[
\frac12 \prod_{i=1}^n \frac{1}{(a_i+1)^2}
\;\le\;
|I(a_1,\dots,a_n)|
\;\le\;
\prod_{i=1}^n \frac{1}{a_i^2}.
\]

\item
For any two irrational numbers $x_1,x_2\in(0,1)$ with the same first $n$ partial
quotients $a_1,\dots,a_n$ but
\[
|a_{n+1}(x_1)-a_{n+1}(x_2)|\ge 2,
\]
there exists an absolute constant $c>0$ such that
\[
|x_1-x_2|
\;\ge\;
\frac{c}{q_n^2}\,
\frac{|a_{n+1}(x_1)-a_{n+1}(x_2)|}
{a_{n+1}(x_1)a_{n+1}(x_2)}.
\]
\end{enumerate}
\end{lemma}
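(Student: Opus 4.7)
The plan is to reduce both parts to the standard determinant identity $p_n q_{n-1}-p_{n-1}q_n=(-1)^{n-1}$ together with the Möbius--transformation representation of $x$ via its $(n+1)$-th complete quotient. Nothing beyond classical continued--fraction bookkeeping is needed; the only genuine input is the separation hypothesis $|a_{n+1}(x_1)-a_{n+1}(x_2)|\ge 2$ in (ii), whose role I describe below.

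For part (i), I would first show that the two endpoints of $I(a_1,\dots,a_n)$ satisfy
\[
  \left|\frac{p_n}{q_n}-\frac{p_n+p_{n-1}}{q_n+q_{n-1}}\right|
  =\frac{|p_n q_{n-1}-p_{n-1}q_n|}{q_n(q_n+q_{n-1})}
  =\frac{1}{q_n(q_n+q_{n-1})},
\]
where the last equality uses the determinant identity. I would then invoke the standard recursion--based sandwich $\prod_{i=1}^n a_i\le q_n\le\prod_{i=1}^n(a_i+1)$, which follows by induction from $q_i=a_i q_{i-1}+q_{i-2}$ with $q_0=1$, $q_1=a_1$. Since $q_{n-1}\le q_n$ we have $q_n\le q_n+q_{n-1}\le 2q_n$, so $|I(a_1,\dots,a_n)|$ lies between $\tfrac12 q_n^{-2}$ and $q_n^{-2}$. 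Plugging the sandwich for $q_n$ into these two bounds produces both inequalities in (i).

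For part (ii), I would parameterize by the complete quotient: for $k\in\{1,2\}$, set $b_k:=a_{n+1}(x_k)$ and let $\alpha_k=[b_k;a_{n+2}(x_k),\dots]\in[b_k,b_k+1]$, so that
\[
  x_k=\frac{p_n\alpha_k+p_{n-1}}{q_n\alpha_k+q_{n-1}}.
\]
Subtracting these two fractions, the quadratic and constant terms in $\alpha_k$ cancel and the numerator telescopes via the determinant identity to $\pm(\alpha_1-\alpha_2)$, yielding
\[
  |x_1-x_2|=\frac{|\alpha_1-\alpha_2|}{(q_n\alpha_1+q_{n-1})(q_n\alpha_2+q_{n-1})}.
\]
I would then bound the denominator from above using $\alpha_k\le b_k+1\le 2b_k$ (since $b_k\ge 1$) and $q_{n-1}\le q_n$, giving $(q_n\alpha_k+q_{n-1})\le 3q_n b_k$, and bound the numerator from below using the hypothesis $|b_1-b_2|\ge 2$: the intervals $[b_1,b_1+1]$ and $[b_2,b_2+1]$ are then disjoint with gap at least $|b_1-b_2|-1\ge\tfrac12|b_1-b_2|$, so $|\alpha_1-\alpha_2|\ge\tfrac12|b_1-b_2|$. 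Combining produces the stated inequality with an absolute constant (say $c=1/18$).

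The only conceptual point --- and the closest thing to a main obstacle --- is recognising precisely why the separation hypothesis $|b_1-b_2|\ge 2$ is essential: without it, $\alpha_1$ and $\alpha_2$ could lie in adjacent touching intervals $[b,b+1]$ and $[b+1,b+2]$, collapsing $|\alpha_1-\alpha_2|$ to arbitrarily small values and spoiling the multiplicative lower bound. Beyond this, the argument is routine bookkeeping with the Möbius structure of the convergents.
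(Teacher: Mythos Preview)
Your proposal is correct and is exactly the standard direct computation the paper alludes to; the paper itself does not give a proof beyond citing references and remarking that both statements follow from the elementary geometry of continued--fraction cylinders. Your argument is precisely that elementary geometry spelled out, including the explicit constant $c=1/18$ in part~(ii).
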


\begin{proof}
Both statements are standard consequences of the elementary geometry of
continued fraction cylinders and can be verified by a direct computation.
See, for instance, \cite{EinsiedlerWard2011,Good1941,Hirst1973,Nakajima2025}.
\end{proof}

\begin{lemma}\label{lem:I-vs-qn}
For every $n\ge 1$ and every word $(a_1,\dots,a_n)\in\N^n$ with convergents $p_n/q_n$,
\[\frac{1}{2q_n^2}\le 
|I(a_1,\dots,a_n)|
=
\frac{1}{q_n(q_n+q_{n-1})}
\le \frac{1}{q_n^2}.
\]

\end{lemma}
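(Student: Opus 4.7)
The plan is to compute $|I(a_1,\dots,a_n)|$ exactly from its defining endpoints and then read off the two side bounds from monotonicity of the denominator sequence $(q_n)$. Regardless of the parity of $n$, the two endpoints are $p_n/q_n$ and $(p_n+p_{n-1})/(q_n+q_{n-1})$, so the length is
\[
|I(a_1,\dots,a_n)|
=\Bigl|\frac{p_n}{q_n}-\frac{p_n+p_{n-1}}{q_n+q_{n-1}}\Bigr|
=\frac{|p_n q_{n-1}-p_{n-1}q_n|}{q_n(q_n+q_{n-1})}.
\]
So the first step is to write out this difference, bring it to a common denominator, and cancel the $p_n q_n$ cross-terms in the numerator.

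Next I invoke the standard determinantal identity $p_n q_{n-1}-p_{n-1}q_n=(-1)^{n-1}$, which follows by induction from the three-term recurrences recalled just before the statement. This makes the numerator equal to $1$ in absolute value and produces the exact formula
\[
|I(a_1,\dots,a_n)|=\frac{1}{q_n(q_n+q_{n-1})},
\]
which is the middle equality of the lemma.

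For the two inequalities I use that $(q_n)_{n\ge 0}$ is nondecreasing: since $q_0=1$, $q_1=a_1\ge 1=q_0$, and $q_n=a_nq_{n-1}+q_{n-2}\ge q_{n-1}$ for $n\ge 2$ (as $a_n\ge 1$ and $q_{n-2}\ge 0$), we get $q_{n-1}\le q_n$ for all $n\ge 1$. Therefore $q_n\le q_n+q_{n-1}\le 2q_n$, and dividing $q_n$ into the displayed formula yields the upper bound $1/q_n^2$ and the lower bound $1/(2q_n^2)$ simultaneously.

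There is no real obstacle here: the argument is purely algebraic, and the only non-trivial input is the determinantal identity, which is a standard induction from the recurrences for $p_n,q_n$ already written down above. The one small point to flag is that the initial case $n=1$ must be covered by the monotonicity step; this is handled by checking $q_0=1\le a_1=q_1$ directly, so no separate base case is needed.
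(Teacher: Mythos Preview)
Your proof is correct and is exactly the standard elementary argument: compute the length from the two endpoints, use the determinantal identity $p_n q_{n-1}-p_{n-1}q_n=(-1)^{n-1}$ (which follows by induction from the recurrences with the paper's initial conditions $p_{-1}=1,\ p_0=0,\ q_{-1}=0,\ q_0=1$), and then bound $q_n\le q_n+q_{n-1}\le 2q_n$ via the monotonicity of $(q_n)$. The paper itself does not give a proof but simply refers to standard sources; your write-up is precisely the computation those sources contain, so there is no discrepancy in approach.
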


\begin{proof}
See, for example, \cite{EinsiedlerWard2011,Good1941,Hirst1973,Nakajima2025}.
\end{proof}

We now extend these concepts to higher dimensions. For $\mathbf{x} = (x_1, \ldots, x_d) \in (0,1)^d$, define the digit vector at time $i$ as:
\[
\mathbf{a}_i(\mathbf{x}) = (a_i(x_1), \ldots, a_i(x_d)) \in \mathbb{N}^d,
\]
where $a_i(x_j)$ denotes the $i$-th partial quotient in the continued fraction expansion of $x_j$.

\begin{definition}
For $\omega = (\mathbf{a}_1, \ldots, \mathbf{a}_n) \in (\mathbb{N}^d)^n$, the cylinder set $I_\omega$ is defined as:
\[
I_\omega =I(\mathbf{a}_1, \ldots, \mathbf{a}_n) =\{\mathbf{x} \in (0,1)^d : \mathbf{a}_i(\mathbf{x}) = \mathbf{a}_i \text{ for } i=1,\ldots,n\}.
\]
\end{definition}

The geometric structure of these cylinder sets is characterized by their product decomposition:

\begin{lemma}\label{lem:cylinder_structure}
For $\omega = (\mathbf{a}_1, \ldots, \mathbf{a}_n) \in (\mathbb{N}^d)^n$, the cylinder set decomposes as:
\[
I_\omega = I^{(1)}_{\omega} \times I^{(2)}_{\omega} \times \cdots \times I^{(d)}_{\omega},
\]
where each $I^{(j)}_{\omega}$ is the one-dimensional cylinder determined by $(a_1^{(j)}, \ldots, a_n^{(j)})$, that is $I^{(j)}_{\omega} = I(a_1^{(j)}, \ldots, a_n^{(j)})$.
\end{lemma}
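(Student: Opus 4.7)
The plan is to prove this by a direct unpacking of the definitions; since the coordinatewise continued-fraction coding applies the Gauss map to each coordinate independently, the claim should reduce to a routine set-theoretic equivalence with no real analytic content.

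First I would fix notation by writing $\mathbf{a}_i = (a_i^{(1)}, \dots, a_i^{(d)}) \in \N^d$ for each $i = 1, \dots, n$, so that the word $\omega$ is encoded by the $d$ one-dimensional words $\omega^{(j)} = (a_1^{(j)}, \dots, a_n^{(j)}) \in \N^n$, one for each coordinate. Then I would apply the very definition of the cylinder $I_\omega$: for $\mathbf{x} = (x_1, \dots, x_d) \in (0,1)^d$, one has $\mathbf{x} \in I_\omega$ if and only if $\mathbf{a}_i(\mathbf{x}) = \mathbf{a}_i$ for all $1 \le i \le n$. By the coordinatewise definition of $\mathbf{a}_i(\mathbf{x}) = (a_i(x_1), \dots, a_i(x_d))$, equality of vectors is equivalent to equality of all coordinates, i.e.\ $a_i(x_j) = a_i^{(j)}$ for all $1 \le i \le n$ and all $1 \le j \le d$.

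Next I would observe that, for each fixed $j$, the joint condition $a_i(x_j) = a_i^{(j)}$ for $i = 1, \dots, n$ is exactly the membership condition $x_j \in I(a_1^{(j)}, \dots, a_n^{(j)}) = I^{(j)}_\omega$ for the one-dimensional fundamental interval recalled at the start of Subsection~\ref{subsec:cf-facts}. Since the $d$ conditions (one per $j$) are independent and together equivalent to the vector-valued condition $\mathbf{a}_i(\mathbf{x}) = \mathbf{a}_i$ for all $i$, this yields
\[
\mathbf{x} \in I_\omega
\iff
x_j \in I^{(j)}_\omega \text{ for every } 1 \le j \le d
\iff
\mathbf{x} \in I^{(1)}_\omega \times \cdots \times I^{(d)}_\omega,
\]
which is precisely the claimed decomposition.

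There is essentially no obstacle here: the only mild point to mention is that the one-dimensional digit function $a_i(x_j)$ is only defined for irrational $x_j$, while $I_\omega$ is declared as a subset of $(0,1)^d$. This is a harmless boundary issue that is handled the same way as in the one-dimensional convention of Subsection~\ref{subsec:cf-facts}: either restrict to irrational coordinates, or adopt the standard convention (as in Lemma~\ref{lem:1d_geometry}) that $I(a_1^{(j)}, \dots, a_n^{(j)})$ is the half-open fundamental interval, in which case the decomposition holds verbatim as an identity of subsets of $(0,1)^d$. I would add a single sentence to that effect and conclude the proof.
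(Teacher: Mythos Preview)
Your proof is correct and follows essentially the same approach as the paper: both arguments unpack the coordinatewise definition to obtain the equivalence $\mathbf{x}\in I_\omega \iff a_i(x_j)=a_i^{(j)}$ for all $i,j$, and then recognize this as $x_j\in I^{(j)}_\omega$ for each $j$. The paper's version is simply more terse and does not comment on the rational-boundary issue.
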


\begin{proof}
The result follows immediately from the coordinate-wise definition:
\[
\mathbf{x} \in I_\omega \iff a_i(x_j) = a_i^{(j)} \text{ for all } i=1,\ldots,n \text{ and } j=1,\ldots,d.
\]
Thus, membership in $I_\omega$ is equivalent to $x_j \in I^{(j)}_{\omega}$ for each coordinate $j$.
\end{proof}

This product structure allows us to establish diameter estimates for multidimensional cylinders:

\begin{proposition}\label{prop:cylinder_estimate}
Let $\omega=(\mathbf a_1,\dots,\mathbf a_n)\in(\N^d)^n$. Then there exist constants
$c_1,c_2>0$, depending only on $d$, such that
\[
c_1 \max_{1\le j\le d}\ \prod_{i=1}^n (a_i^{(j)}+1)^{-2}
\;\le\;
\diam(I_\omega)
\;\le\;
c_2 \max_{1\le j\le d}\ \prod_{i=1}^n (a_i^{(j)})^{-2}.
\]
\end{proposition}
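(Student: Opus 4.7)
The plan is to combine the product decomposition of Lemma~\ref{lem:cylinder_structure} with the one--dimensional length estimates of Lemma~\ref{lem:1d_geometry}(i), using only that the Euclidean diameter of a box in $\R^d$ is comparable to the maximum of its side lengths.

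First I would write, by Lemma~\ref{lem:cylinder_structure}, $I_\omega = I^{(1)}_\omega\times\cdots\times I^{(d)}_\omega$, where each $I^{(j)}_\omega = I(a_1^{(j)},\dots,a_n^{(j)})$ is a one-dimensional fundamental interval and therefore a genuine interval of $\R$. Consequently $\diam(I^{(j)}_\omega) = |I^{(j)}_\omega|$. For any box $A_1\times\cdots\times A_d\subset\R^d$ with each $A_j$ an interval, the Euclidean diameter satisfies the elementary sandwich
\[
\max_{1\le j\le d}|A_j| \;\le\; \diam\bigl(A_1\times\cdots\times A_d\bigr) \;\le\; \sqrt{d}\,\max_{1\le j\le d}|A_j|,
\]
since the diameter of the box equals $\bigl(\sum_{j=1}^d |A_j|^2\bigr)^{1/2}$. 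Both constants depend only on $d$, which matches the statement.

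Next I would apply Lemma~\ref{lem:1d_geometry}(i) to each factor, giving
\[
\tfrac12\prod_{i=1}^n (a_i^{(j)}+1)^{-2} \;\le\; |I^{(j)}_\omega| \;\le\; \prod_{i=1}^n (a_i^{(j)})^{-2}\qquad (1\le j\le d).
\]
Taking the maximum over $j$ on both sides and inserting into the box--diameter sandwich yields
\[
\tfrac12\max_{1\le j\le d}\prod_{i=1}^n (a_i^{(j)}+1)^{-2} \;\le\; \diam(I_\omega) \;\le\; \sqrt{d}\,\max_{1\le j\le d}\prod_{i=1}^n (a_i^{(j)})^{-2},
\]
so one may take $c_1=1/2$ and $c_2=\sqrt{d}$.

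There is no genuine obstacle here: the only point to be careful about is not to confuse the two maxima. The lower bound uses a single coordinate $j^\ast$ realising the maximum of the lower estimates $\tfrac12\prod_i(a_i^{(j)}+1)^{-2}$, while the upper bound uses the coordinate realising the maximum of $\prod_i(a_i^{(j)})^{-2}$; these indices need not agree, but no matching is required because one simply bounds $\diam(I_\omega)$ below by the length of the single factor $I^{(j^\ast)}_\omega$ and above by $\sqrt{d}$ times the length of the longest factor. This completes the reduction.
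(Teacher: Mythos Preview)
Your proof is correct and follows essentially the same approach as the paper: decompose $I_\omega$ as a product via Lemma~\ref{lem:cylinder_structure}, sandwich the diameter of the box by $\max_j |I^{(j)}_\omega|$ and $\sqrt{d}\,\max_j |I^{(j)}_\omega|$, and then apply Lemma~\ref{lem:1d_geometry}(i) coordinatewise. Your explicit constants $c_1=1/2$ and $c_2=\sqrt{d}$ match what the paper absorbs into $c_1,c_2$, and your closing remark about the two maxima not needing to agree is a harmless clarification.
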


\begin{proof}
By Lemma~\ref{lem:cylinder_structure},
$I_\omega=\prod_{j=1}^d I^{(j)}_\omega$ with $I^{(j)}_\omega=I(a_1^{(j)},\dots,a_n^{(j)})$.
Hence
\[
\max_{1\le j\le d}|I^{(j)}_\omega|
\;\le\;
\diam(I_\omega)
\;\le\;
\sqrt d\,\max_{1\le j\le d}|I^{(j)}_\omega|.
\]
Applying Lemma~\ref{lem:1d_geometry}(i) coordinatewise gives
\[
\frac12\prod_{i=1}^n (a_i^{(j)}+1)^{-2}
\;\le\;
|I^{(j)}_\omega|
\;\le\;
\prod_{i=1}^n (a_i^{(j)})^{-2}.
\]
Taking the maximum over $j$ and absorbing the factors $\frac12$ and $\sqrt d$ into
$c_1,c_2$ yields the claim.
\end{proof}

\begin{lemma}\label{lem:cylinder_digit_separation}
Let $\mathbf{x}_1,\mathbf{x}_2\in(0,1)^d$ satisfy $\mathbf{a}_i(\mathbf{x}_1)=\mathbf{a}_i(\mathbf{x}_2)$
for $1\le i\le n$ and
\[
\bigl\|\mathbf{a}_{n+1}(\mathbf{x}_1)-\mathbf{a}_{n+1}(\mathbf{x}_2)\bigr\|_\infty\ge 2.
\]
Let $\mathcal D:=\{1\le j\le d:\ |a_{n+1}^{(j)}(\mathbf{x}_1)-a_{n+1}^{(j)}(\mathbf{x}_2)|\ge 2\}$.
Then there exists a constant $C>0$ depending only on $d$ such that
\[
\|\mathbf{x}_1-\mathbf{x}_2\|_\infty
\ge
C\max_{j\in\mathcal D}
\left(
|I(a_1^{(j)},\ldots,a_n^{(j)})|
\cdot
\frac{|a_{n+1}^{(j)}(\mathbf{x}_1)-a_{n+1}^{(j)}(\mathbf{x}_2)|}
{a_{n+1}^{(j)}(\mathbf{x}_1)\,a_{n+1}^{(j)}(\mathbf{x}_2)}
\right).
\]
\end{lemma}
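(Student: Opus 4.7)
The plan is to reduce the multidimensional statement to the one--dimensional separation estimate already recorded in Lemma~\ref{lem:1d_geometry}(ii), applied coordinate by coordinate. The key observation is that the hypothesis $\mathbf a_i(\mathbf x_1)=\mathbf a_i(\mathbf x_2)$ for $1\le i\le n$ forces $a_i(x_1^{(j)})=a_i(x_2^{(j)})$ for every coordinate $j$, so each pair of one--dimensional projections $(x_1^{(j)},x_2^{(j)})$ shares the same first $n$ partial quotients. For indices $j\in\mathcal D$, the $(n+1)$--st partial quotients additionally differ by at least $2$, precisely the quantitative hypothesis of Lemma~\ref{lem:1d_geometry}(ii).

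Concretely, I would fix $j\in\mathcal D$ and start from the trivial bound $\|\mathbf x_1-\mathbf x_2\|_\infty\ge |x_1^{(j)}-x_2^{(j)}|$. Applying Lemma~\ref{lem:1d_geometry}(ii) to the pair $(x_1^{(j)},x_2^{(j)})$ yields
\[
|x_1^{(j)}-x_2^{(j)}|\;\ge\;\frac{c}{(q_n^{(j)})^2}\,
\frac{|a_{n+1}^{(j)}(\mathbf x_1)-a_{n+1}^{(j)}(\mathbf x_2)|}{a_{n+1}^{(j)}(\mathbf x_1)\,a_{n+1}^{(j)}(\mathbf x_2)},
\]
where $q_n^{(j)}$ is the $n$--th convergent denominator associated with $(a_1^{(j)},\dots,a_n^{(j)})$. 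Lemma~\ref{lem:I-vs-qn} then converts $1/(q_n^{(j)})^2$ into $|I(a_1^{(j)},\dots,a_n^{(j)})|$ up to a factor of $2$, giving
\[
|x_1^{(j)}-x_2^{(j)}|\;\ge\;\frac{c}{2}\,|I(a_1^{(j)},\dots,a_n^{(j)})|\,
\frac{|a_{n+1}^{(j)}(\mathbf x_1)-a_{n+1}^{(j)}(\mathbf x_2)|}{a_{n+1}^{(j)}(\mathbf x_1)\,a_{n+1}^{(j)}(\mathbf x_2)}.
\]
Taking the maximum over $j\in\mathcal D$ (and absorbing the dimension--dependent factor from the sup--norm equivalences into a constant $C$ depending only on $d$) produces exactly the claimed bound.

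There is no real obstacle: the proof is a coordinatewise reduction combined with Lemmas~\ref{lem:1d_geometry}(ii) and~\ref{lem:I-vs-qn}. The only subtle point is that one must restrict to indices $j\in\mathcal D$ where the coordinate--wise gap $|a_{n+1}^{(j)}(\mathbf x_1)-a_{n+1}^{(j)}(\mathbf x_2)|\ge 2$; on the complementary coordinates the one--dimensional estimate need not apply, but those indices are harmless because we only need a lower bound witnessed by a single good coordinate, and at least one exists by the hypothesis $\|\mathbf a_{n+1}(\mathbf x_1)-\mathbf a_{n+1}(\mathbf x_2)\|_\infty\ge 2$.
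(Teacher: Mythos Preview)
Your proposal is correct and follows essentially the same route as the paper: fix $j\in\mathcal D$, apply Lemma~\ref{lem:1d_geometry}(ii) to the $j$th coordinates, convert $1/(q_n^{(j)})^2$ to $|I(a_1^{(j)},\dots,a_n^{(j)})|$ via Lemma~\ref{lem:I-vs-qn}, and take the maximum over $j\in\mathcal D$. One minor remark: since you are working with $\|\cdot\|_\infty$ throughout, no norm--equivalence factor is actually needed, and the resulting constant $C$ can be taken absolute (the paper simply takes $C=c_0$).
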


\begin{proof}
By Lemma~\ref{lem:cylinder_structure},
\[
I(\mathbf{a}_1,\ldots,\mathbf{a}_n)=\prod_{j=1}^d I(a_1^{(j)},\ldots,a_n^{(j)}).
\]
Fix $j\in\mathcal D$. Then $x_{1,j}$ and $x_{2,j}$ have the same first $n$ partial quotients
$(a_1^{(j)},\ldots,a_n^{(j)})$ and
$|a_{n+1}(x_{1,j})-a_{n+1}(x_{2,j})|\ge 2$. By Lemma~\ref{lem:1d_geometry}{\rm (ii)},
\[
|x_{1,j}-x_{2,j}|
\ge
\frac{c_0}{q_{n,j}^2}\,
\frac{|a_{n+1}(x_{1,j})-a_{n+1}(x_{2,j})|}
{a_{n+1}(x_{1,j})\,a_{n+1}(x_{2,j})},
\]
where $q_{n,j}$ is the denominator of the $n$-th convergent associated with
$(a_1^{(j)},\ldots,a_n^{(j)})$. Since $|I(a_1^{(j)},\ldots,a_n^{(j)})|\le q_{n,j}^{-2}$
by Lemma~\ref{lem:I-vs-qn}, we obtain
\[
|x_{1,j}-x_{2,j}|
\ge
c_0\,|I(a_1^{(j)},\ldots,a_n^{(j)})|
\cdot
\frac{|a_{n+1}(x_{1,j})-a_{n+1}(x_{2,j})|}
{a_{n+1}(x_{1,j})\,a_{n+1}(x_{2,j})}.
\]
Taking the maximum over $j\in\mathcal D$ and using
$\|\mathbf{x}_1-\mathbf{x}_2\|_\infty=\max_{1\le j\le d}|x_{1,j}-x_{2,j}|$ gives the claim
(with $C=c_0$).
\end{proof}

The next lemma gives a uniform quantitative separation estimate between sibling
cylinders, and will be used to verify the separation condition in
Definition~\ref{def:moran-nd}.

\begin{lemma}[Sibling cylinder separation]\label{lem:sibling_cylinder_separation}
Let $n\ge 1$ and let $\omega,\omega'\in(\N^d)^n$ be distinct words with the same prefix
of length $n-1$, say
\[
\omega=(\mathbf a_1,\dots,\mathbf a_{n-1},\mathbf v),\qquad
\omega'=(\mathbf a_1,\dots,\mathbf a_{n-1},\mathbf w).
\]
Assume that $\|\mathbf v-\mathbf w\|_\infty\ge 2$. Then $\overline{I_\omega}$ and
$\overline{I_{\omega'}}$ are disjoint and there exists an absolute constant $c>0$
(say $c=\tfrac14$) such that
\[
\dist(I_\omega,I_{\omega'})
\;\ge\;
c\,
\min_{1\le j\le d}\min\bigl(|I^{(j)}_\omega|,\ |I^{(j)}_{\omega'}|\bigr).
\]
\end{lemma}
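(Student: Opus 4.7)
The plan is to reduce the $d$-dimensional sibling separation to the one-dimensional case via the product structure of coordinatewise cylinders (Lemma~\ref{lem:cylinder_structure}), and then combine one-dimensional sibling separation with axis-aligned box geometry.

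First I set up the reduction. By Lemma~\ref{lem:cylinder_structure}, $I_\omega=\prod_{j=1}^d I_\omega^{(j)}$ and $I_{\omega'}=\prod_{j=1}^d I_{\omega'}^{(j)}$, with $I_\omega^{(j)}=I_{\omega'}^{(j)}$ exactly on the indices where $v^{(j)}=w^{(j)}$. The hypothesis $\|\mathbf v-\mathbf w\|_\infty\ge 2$ picks out at least one index $j^\ast$ with $|v^{(j^\ast)}-w^{(j^\ast)}|\ge 2$. At coordinate $j^\ast$ the ordered family of sibling subcylinders $\{I(a_1^{(j^\ast)},\ldots,a_{n-1}^{(j^\ast)},k)\}_{k\ge 1}$ partitions the parent cylinder, so a full intermediate sibling $I(a_1^{(j^\ast)},\ldots,a_{n-1}^{(j^\ast)},u)$ with $u$ strictly between $v^{(j^\ast)}$ and $w^{(j^\ast)}$ lies in the gap separating $\overline{I^{(j^\ast)}_\omega}$ from $\overline{I^{(j^\ast)}_{\omega'}}$. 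Since the $d$-dimensional closures factor as products, this already yields $\overline{I_\omega}\cap\overline{I_{\omega'}}=\emptyset$.

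For the quantitative estimate I would work in the $\ell^\infty$ metric on $\R^d$; since the Euclidean distance dominates the $\ell^\infty$ distance, any lower bound passes through. For axis-aligned products one has the elementary identity
\[
\dist_\infty(I_\omega, I_{\omega'})=\max_{1\le j\le d}\dist\bigl(I^{(j)}_\omega, I^{(j)}_{\omega'}\bigr),
\]
so it suffices to bound the one-dimensional distance at coordinate $j^\ast$ from below. Applying Lemma~\ref{lem:1d_geometry}(ii) at coordinate $j^\ast$ (with the role of $n$ there played by $n-1$ here) gives
\[
\dist\bigl(I^{(j^\ast)}_\omega,I^{(j^\ast)}_{\omega'}\bigr)
\ge \frac{c_0}{q_{n-1,\,j^\ast}^{2}}\cdot
\frac{|v^{(j^\ast)}-w^{(j^\ast)}|}{v^{(j^\ast)}\,w^{(j^\ast)}}.
\]
Comparing this with the upper estimates $|I^{(j^\ast)}_\omega|\le (v^{(j^\ast)})^{-2}q_{n-1,\,j^\ast}^{-2}$ and $|I^{(j^\ast)}_{\omega'}|\le (w^{(j^\ast)})^{-2}q_{n-1,\,j^\ast}^{-2}$, which follow from Lemma~\ref{lem:I-vs-qn} and the recursion $q_n=a_n q_{n-1}+q_{n-2}$, together with the elementary bound $|v^{(j^\ast)}-w^{(j^\ast)}|\cdot\max(v^{(j^\ast)},w^{(j^\ast)})^{2}\ge 2\,v^{(j^\ast)}w^{(j^\ast)}$, I obtain $\dist(I^{(j^\ast)}_\omega,I^{(j^\ast)}_{\omega'})\ge c_1\min(|I^{(j^\ast)}_\omega|,|I^{(j^\ast)}_{\omega'}|)$ for some absolute $c_1>0$. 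Replacing the single coordinate $j^\ast$ by the minimum over all $j$ only weakens the bound, yielding the claim.

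The main obstacle is not the qualitative argument but producing an explicit value of $c$ matching the stated $c=\tfrac14$. The cleanest route I would try is to sidestep Lemma~\ref{lem:1d_geometry}(ii) entirely and compute the gap directly: assuming $v^{(j^\ast)}<w^{(j^\ast)}$, the intermediate sibling cylinder $I(a_1^{(j^\ast)},\ldots,a_{n-1}^{(j^\ast)},v^{(j^\ast)}+1)$ lies strictly inside the gap between $I^{(j^\ast)}_\omega$ and $I^{(j^\ast)}_{\omega'}$, and by Lemma~\ref{lem:I-vs-qn} its length is at least $\tfrac12$ times the reciprocal square of its convergent denominator, which is strictly smaller than that associated with $w^{(j^\ast)}$. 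This already gives $\dist\ge\tfrac12\min(|I^{(j^\ast)}_\omega|,|I^{(j^\ast)}_{\omega'}|)$, which comfortably beats the stated $c=\tfrac14$.
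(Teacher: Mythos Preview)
Your proposal is correct, and the final ``cleanest route'' you sketch is essentially the paper's own argument: choose a coordinate $j_0$ with $|v^{(j_0)}-w^{(j_0)}|\ge2$, observe that the intermediate one-dimensional sibling $I(\alpha,m+1)$ (with $m=\min(v^{(j_0)},w^{(j_0)})$) lies entirely in the gap, and bound its length from below. The paper sharpens your last step slightly: instead of invoking the two-sided bounds of Lemma~\ref{lem:I-vs-qn} to get a factor $\tfrac12$, it uses the explicit formula $|I(\alpha,t)|=\bigl((tq_{n-1}+q_{n-2})((t{+}1)q_{n-1}+q_{n-2})\bigr)^{-1}$ to see that $t\mapsto|I(\alpha,t)|$ is decreasing, whence $|I(\alpha,m+1)|\ge|I(\alpha,m')|=\min(|I^{(j_0)}_\omega|,|I^{(j_0)}_{\omega'}|)$ with constant $c=1$. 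Your earlier route through Lemma~\ref{lem:1d_geometry}(ii) also works but is an unnecessary detour; the intermediate-sibling argument is both simpler and yields a better constant.
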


\begin{proof}
By Lemma~\ref{lem:cylinder_structure},
\[
I_\omega=\prod_{j=1}^d I^{(j)}_\omega,\qquad I_{\omega'}=\prod_{j=1}^d I^{(j)}_{\omega'}.
\]
Choose $j_0$ such that $|v^{(j_0)}-w^{(j_0)}|\ge 2$. Let
\[
\alpha:=(a_1^{(j_0)},\dots,a_{n-1}^{(j_0)}),\qquad
m:=v^{(j_0)},\quad m':=w^{(j_0)}.
\]
Then
\[
I^{(j_0)}_\omega=I(\alpha,m),\qquad I^{(j_0)}_{\omega'}=I(\alpha,m').
\]
Assume w.l.o.g.\ $m<m'$, so $m'\ge m+2$. The one-dimensional depth-$n$ cylinders
$\{I(\alpha,t)\}_{t\ge 1}$ form a disjoint partition of the parent interval $I(\alpha)$
(up to endpoints) in their natural order, hence the intermediate cylinder
$I(\alpha,m+1)$ lies between $I(\alpha,m)$ and $I(\alpha,m')$. Therefore
\[
\dist\bigl(I(\alpha,m),I(\alpha,m')\bigr)\ \ge\ |I(\alpha,m+1)|.
\]

Write $p_k/q_k$ for the convergents associated with the prefix $\alpha$.
For the word $(\alpha,t)$ one has $q_n(t)=tq_{n-1}+q_{n-2}$, and the endpoint formula
for continued-fraction cylinders gives 
\[
|I(\alpha,t)|=\frac{1}{q_n(t)\bigl(q_n(t)+q_{n-1}\bigr)}
=\frac{1}{(tq_{n-1}+q_{n-2})\bigl((t+1)q_{n-1}+q_{n-2}\bigr)}.
\]
In particular, $t\mapsto |I(\alpha,t)|$ is decreasing, so
\[
|I(\alpha,m+1)|\ \ge\ |I(\alpha,m')|
=\min\bigl(|I(\alpha,m)|,\ |I(\alpha,m')|\bigr).
\]
Hence
\[
\dist\bigl(I^{(j_0)}_\omega,I^{(j_0)}_{\omega'}\bigr)
\ \ge\ \min\bigl(|I^{(j_0)}_\omega|,\ |I^{(j_0)}_{\omega'}|\bigr).
\]
Finally, for Cartesian products we have
\[
\dist(I_\omega,I_{\omega'})\ \ge\ \dist\bigl(I^{(j_0)}_\omega,I^{(j_0)}_{\omega'}\bigr),
\]
which yields the stated bound (with $c=1$).
\end{proof}

These geometric separation properties will be used repeatedly in the
construction and analysis of higher-dimensional fractal transference
arguments.

To verify the vanishing mesh condition in Definition~\ref{def:moran-nd}, we record the following classical lemma.

\begin{lemma}[Uniform exponential decay of continued-fraction cylinders]
\label{lem:uniform-cylinder-decay}
There exist constants $C>0$ and $\vartheta\in(0,1)$ such that for every $n\ge1$ and
every word $(a_1,\dots,a_n)\in\N^n$,
\[
|I(a_1,\dots,a_n)| \le C\,\vartheta^n.
\]
In particular one may take $\vartheta=\varphi^{-2}$, where $\varphi=(1+\sqrt5)/2$.
\end{lemma}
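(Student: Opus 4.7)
The plan is to combine the upper bound $|I(a_1,\dots,a_n)|\le 1/q_n^2$ from Lemma~\ref{lem:I-vs-qn} with a uniform Fibonacci lower bound for the denominators $q_n$. The key observation is that the recursion $q_i=a_iq_{i-1}+q_{i-2}$ is monotone in each $a_i$, so the slowest possible growth of $q_n$ over all digit words in $\N^n$ occurs when every $a_i=1$, which is precisely the Fibonacci recursion.

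First I would use the initial data $q_{-1}=0$, $q_0=1$ together with $a_i\ge 1$ to deduce $q_i\ge q_{i-1}+q_{i-2}$ for every $i\ge 1$. An easy induction on $n$ then gives $q_n\ge F_{n+1}$, where $(F_n)_{n\ge 1}$ are the Fibonacci numbers normalized by $F_1=F_2=1$. Next I would invoke Binet's formula $F_n=(\varphi^n-\psi^n)/\sqrt 5$ with $\varphi=(1+\sqrt 5)/2$ and $\psi=(1-\sqrt 5)/2$; since $|\psi|<1$, there is an absolute constant $c>0$ such that $F_{n+1}\ge c\,\varphi^{n+1}$ for all $n\ge 0$.

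Combining these two steps gives
\[
|I(a_1,\dots,a_n)|\;\le\;\frac{1}{q_n^2}\;\le\;\frac{1}{F_{n+1}^2}\;\le\;\frac{1}{c^2\varphi^{2(n+1)}}\;=\;C\,\vartheta^n,
\]
where $\vartheta=\varphi^{-2}\in(0,1)$ and $C=(c\varphi)^{-2}$, which is exactly the claimed bound with the advertised value of $\vartheta$.

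The argument is elementary and I do not anticipate any substantive obstacle; the only point worth flagging is that the Fibonacci lower bound $q_n\ge F_{n+1}$ is sharp, attained at the all-ones word $a_1=\cdots=a_n=1$, so the exponent $\varphi^{-2}$ is in fact optimal within a purely digit-free, universal estimate of this form.
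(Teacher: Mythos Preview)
Your proposal is correct and follows essentially the same route as the paper: bound $|I|\le 1/q_n^2$, compare $q_n$ to Fibonacci via $q_i\ge q_{i-1}+q_{i-2}$, and extract the exponential rate $\vartheta=\varphi^{-2}$. The only cosmetic difference is that the paper uses the elementary induction $F_{n+1}\ge\varphi^{n-1}$ (yielding $C=\varphi^2$) in place of your appeal to Binet's formula.
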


\begin{proof}
Let $p_n/q_n$ be the $n$-th convergent of $(a_1,\dots,a_n)$. The endpoint formula gives
\[
|I(a_1,\dots,a_n)|=\frac{1}{q_n(q_n+q_{n-1})}\le \frac{1}{q_n^2}.
\]
Since $a_k\ge1$, the recursion $q_n=a_nq_{n-1}+q_{n-2}$ implies $q_n\ge q_{n-1}+q_{n-2}$, hence
$q_n\ge F_{n+1}$ for all $n$, where $(F_n)$ is the Fibonacci sequence. Moreover,
$F_{n+1}\ge \varphi^{\,n-1}$ for $n\ge1$ (by induction using $\varphi^2=\varphi+1$). Therefore
\[
|I(a_1,\dots,a_n)|\le F_{n+1}^{-2}\le \varphi^{2}\,\varphi^{-2n}.
\]
Thus one may take $\vartheta=\varphi^{-2}$ and $C=\varphi^{2}$.
\end{proof}

\begin{remark}\label{rmk:uniform-cylinder-decay-d}
  Applying Lemma~\ref{lem:uniform-cylinder-decay} coordinatewise, if
$I_\omega=\prod_{j=1}^d I^{(j)}_\omega$ is a $d$--dimensional cylinder of word length
$n$, then
\[
\diam(I_\omega)\le \sqrt d\,\max_{1\le j\le d}|I^{(j)}_\omega|
\le \sqrt d\,C\,\vartheta^n.
\]
In particular, any choice of cubes $Q_\omega\subset I_\omega$ at level $n$ satisfies
$\max_\omega \diam(Q_\omega)\to0$ as $n\to\infty$.
\end{remark}

\subsection{Scaling and reduction lemmas in $\mathbb{N}^d$}
  \label{subsec:density}

The following lemma is used in the root set construction to control how polynomial
density behaves under changes of scale.

\begin{lemma}\label{lem:ratio-asymptotic}
If $S\subset\N^d$ has polynomial density of exponent $\alpha\ge1$, then for all $t,L>1$ and all large $n$,
\[
\frac{|S\cap Q_{t^n}|}{|S\cap Q_{Lt^n}|}\asymp L^{-d/\alpha},
\]
with implied constants independent of $t$ and $L$.
\end{lemma}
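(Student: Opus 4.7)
The plan is to substitute $N = t^n$ and $N = Lt^n$ directly into the two-sided polynomial-density bound and then check that the slowly-varying logarithmic correction $(\log N)^{\beta}$ contributes only a bounded distortion, uniformly in the parameters $t$ and $L$.

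First I would fix constants $c, C > 0$ and a threshold $N_0$ extracted from the hypothesis so that
\[
c\,N^{d/\alpha}(\log N)^{-\beta} \;\le\; |S\cap Q_N| \;\le\; C\,N^{d/\alpha}(\log N)^{-\beta}
\qquad \text{for all } N \ge N_0.
\]
Given $t, L > 1$, I would restrict to $n$ large enough that $t^n \ge \max(N_0, L)$; applying this sandwich independently to numerator and denominator then gives
\[
\frac{c}{C}\, L^{-d/\alpha}
\;\le\; \frac{|S\cap Q_{t^n}|}{|S\cap Q_{Lt^n}|}
\;\le\; \frac{C}{c}\, L^{-d/\alpha}\left(\frac{\log(Lt^n)}{\log t^n}\right)^{\beta}.
\]

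The last step is to bound the logarithmic ratio: since $\log(Lt^n)/\log(t^n) = 1 + \log L/(n\log t) \le 2$ whenever $t^n \ge L$, which is already guaranteed, the right-hand side is at most $(C\,2^{\beta}/c)\,L^{-d/\alpha}$. Hence the ratio is pinched into $[(c/C)\,L^{-d/\alpha},\ (C\,2^{\beta}/c)\,L^{-d/\alpha}]$, giving $\asymp L^{-d/\alpha}$ with constants depending only on $c, C, \beta$. The only subtlety worth flagging is that the threshold for ``large $n$'' is allowed to depend on $t$ and $L$ (it encodes both $t^n \ge N_0$ and $t^n \ge L$), whereas the implied multiplicative constants do not; the statement already permits this split. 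There is no genuinely hard step --- the argument is essentially the definition of polynomial density plus bookkeeping on the slowly-varying logarithmic factor.
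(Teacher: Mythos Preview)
Your proof is correct and follows essentially the same approach as the paper: substitute $N=t^n$ and $N=Lt^n$ into the two-sided polynomial-density bound and control the ratio of logarithmic corrections. The only cosmetic difference is that the paper keeps the log factor on both sides and argues it tends to $1$, whereas you drop it from the lower bound (valid since $(\log Lt^n/\log t^n)^\beta\ge 1$ for $\beta\ge 0$) and give the explicit bound $2^\beta$ on the upper side; your version is in fact slightly cleaner.
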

\begin{proof}
By polynomial density, for all large $R$,
\[
C_1 \frac{R^{d/\alpha}}{(\log R)^\beta}\le |S\cap Q_R|\le
C_2 \frac{R^{d/\alpha}}{(\log R)^\beta}.
\]
Apply this with $R=t^n$ and $R=Lt^n$ to get
\[
\frac{C_1}{C_2}\,L^{-d/\alpha}\Bigl(\frac{\log t^n}{\log(Lt^n)}\Bigr)^\beta
\le
\frac{|S\cap Q_{t^n}|}{|S\cap Q_{Lt^n}|}
\le
\frac{C_2}{C_1}\,L^{-d/\alpha}\Bigl(\frac{\log(Lt^n)}{\log t^n}\Bigr)^\beta.
\]
Since $\log(Lt^n)=n\log t+\log L$, we have
\[
\Bigl(\frac{\log(Lt^n)}{\log t^n}\Bigr)^\beta=\Bigl(1+\frac{\log L}{n\log t}\Bigr)^\beta\to1
\quad(n\to\infty),
\]
and similarly for its reciprocal. Hence for all sufficiently large $n$ these log-factors are
bounded above and below by absolute constants (depending only on $\beta$), yielding
\[
C_1' L^{-d/\alpha}\le \frac{|S\cap Q_{t^n}|}{|S\cap Q_{Lt^n}|}\le C_2' L^{-d/\alpha},
\]
with $C_1',C_2'$ independent of $t,L$.
\end{proof}

The following lemma is used in the proof of Theorem~\ref{thm:Zdd-fractal-transference}, part~(b).
It produces a subset $S'\subset S$ with $\dB(S')=\dB(S)$ and $\dbar(S')=0$, witnessed along translates $Q_{N_k}(\mathbf v_k)$ with $\min_i (v_k)_i\to\infty$.

\begin{lemma}[Density reduction with diverging translations]\label{lem:density-reduction}
Let $S\subset\N^d$ and set $\alpha:=\dB(S)$.
Then there exist a strictly increasing sequence $(N_k)_{k\ge1}$ and vectors
$(\mathbf v_k)_{k\ge1}\subset\N^d$, and a subset $S'\subset S$ such that
\[
  \dB(S')=\dB(S)=\alpha
  \qquad\text{and}\qquad
  \dbar(S')=0,
\]
and such that
\[
  \frac{|S'\cap Q_{N_k}(\mathbf v_k)|}{|Q_{N_k}|}\longrightarrow \alpha
  \qquad (k\to\infty),
\]
and moreover
\[
  \min_{1\le i\le d}(v_k)_i \longrightarrow \infty.
\]
\end{lemma}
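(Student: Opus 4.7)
The plan is to extract a Banach-density witness sequence $(N_k,\mathbf v_k)$ for $\dB(S)=\alpha$ whose base points escape to infinity coordinatewise, set
\[
  S':=\bigcup_{k\ge1}\bigl(S\cap Q_{N_k}(\mathbf v_k)\bigr),
\]
and force $\dbar(S')=0$ by making these boxes sit very far apart at different scales. The argument splits into two ingredients: (a) producing witnesses of $\dB(S)=\alpha$ with $\min_i(v_k)_i$ arbitrarily large without losing density; (b) choosing these witnesses sparsely enough that the union has ordinary upper density zero. The case $\alpha=0$ is handled trivially by $S':=\emptyset$ and any $\mathbf v_k$ with $\min_i(v_k)_i\to\infty$, so assume $\alpha>0$ henceforth.

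For ingredient (a), I would start from an arbitrary witness sequence $(\tilde N_j,\tilde{\mathbf v}_j)$ with $|S\cap Q_{\tilde N_j}(\tilde{\mathbf v}_j)|/\tilde N_j^{\,d}\to\alpha$, fix a parameter $T\ge1$, and replace each box by the inner sub-box $Q_{\tilde N_j-T}(\tilde{\mathbf v}_j+T\mathbf 1)\subset Q_{\tilde N_j}(\tilde{\mathbf v}_j)$. The shrinkage removes only a boundary slab of at most $\tilde N_j^{\,d}-(\tilde N_j-T)^d=O(T\tilde N_j^{\,d-1})$ cells, so a direct calculation gives density of the sub-box at least $[\alpha-o(1)]\cdot(1-T/\tilde N_j)^{-d}-O(T/\tilde N_j)\to\alpha$ as $\tilde N_j\to\infty$, while its base-point coordinates are all $\ge T+1$. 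Hence $\dB(S)=\alpha$ is witnessed by boxes with $\min_i v_i\ge T$ for every fixed $T$.

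For ingredient (b), I would use a diagonal extraction to get $(N_k,\mathbf v_k)$ with $N_k$ strictly increasing, density converging to $\alpha$, and $\min_i(v_k)_i$ growing arbitrarily fast relative to $N_k$; concretely, enforce $\min_i(v_k)_i\ge 2^{N_k}$. Then $Q_{N_k}(\mathbf v_k)$ meets $Q_N$ only when every coordinate of $\mathbf v_k$ is at most $N-1$, which forces $N_k\le\log_2(N-1)$, so at most $O(\log N)$ indices $k$ contribute to $Q_N$, each by at most $N_k^d\le(\log_2 N)^d$ cells. This yields $|S'\cap Q_N|\ll(\log N)^{d+1}=o(N^d)$, hence $\dbar(S')=0$. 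The equalities $\dB(S')=\dB(S)=\alpha$ and the stated convergence along $(N_k,\mathbf v_k)$ are immediate from $S'\subset S$ together with $S'\cap Q_{N_k}(\mathbf v_k)=S\cap Q_{N_k}(\mathbf v_k)$, and $\min_i(v_k)_i\to\infty$ holds by construction.

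The main obstacle is ingredient (a): one cannot simply translate a Banach-density witness by $T\mathbf 1$, because $S$ may be sparse in the translated region and the density there could collapse. The sub-box device circumvents this by only \emph{shrinking} the original witness box, discarding a thin slab whose contribution is $O(T/\tilde N_j)$ and therefore negligible in the limit. Everything else in the argument — the choice of growth rate for $\min_i(v_k)_i$, and the box-counting check that $\dbar(S')=0$ — is bookkeeping.
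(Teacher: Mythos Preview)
Your ingredient (a) is correct, but it produces witnesses of the wrong shape for ingredient (b). The slab removal takes a box of side $\tilde N_j$ and returns a sub-box of side $\tilde N_j-T$ with base coordinates $\ge T+1$; density is preserved only when $T/\tilde N_j\to 0$, which forces the side $\tilde N_j-T$ to be \emph{much larger} than the guaranteed lower bound $T$ on $\min_i v_i$. Ingredient (b), however, needs the opposite relation $\min_i(v_k)_i\ge 2^{N_k}$ (base $\gg$ side), and no diagonal extraction from the family in (a) can deliver this: whatever pair $(T_k,\tilde N_{j_k})$ you pick with $T_k/\tilde N_{j_k}$ small, the resulting $N_k=\tilde N_{j_k}-T_k$ dominates $T_k$. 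Without that relation your $\dbar(S')=0$ argument collapses: a single box $B_{k^*}$ with $N_{k^*}\gg T_{k^*}$ can already satisfy $|B_{k^*}\cap Q_N|/N^d\asymp 1$ for many $N$.

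The missing step is a tiling-and-averaging refinement: once (a) hands you a big box $Q_M(\mathbf w)$ with $\min_i w_i\ge T$ and density close to $\alpha$, partition it into $Q_N$-subcubes for a \emph{prescribed} small $N$ and pick one subcube with at least average density; it inherits $\min_i v_i\ge T$ while now $N\ll T$, and then your sparsity bookkeeping in (b) goes through verbatim. This is exactly what the paper does --- it folds your slab removal and this averaging into a single partition of a large witness box into side-$k$ subcubes, discarding the $O(1/k)$ fraction of boundary subcubes whose base has a small coordinate, and picks an interior subcube by averaging.
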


\begin{proof}
We proceed in three steps. First, by partitioning a large cube where $S$ has density close to $\alpha$ and averaging over subcubes, we obtain cubes $Q_{N_k}(\mathbf v_k)$ with $|S\cap Q_{N_k}(\mathbf v_k)|/|Q_{N_k}|\to\alpha$ and $\min_i(v_k)_i\to\infty$. Second, we construct $S'\subset S$ by restricting $S$ to a rapidly separated sequence of such cubes. Third, we verify that $\dB(S')=\alpha$, $\dbar(S')=0$, and that the chosen cubes witness the required convergence.

	Fix $\varepsilon_k:=1/k$ and set $N_k:=k$, $M_k:=k^2$.
	By $\dB(S)=\alpha$, choose $R_k\in\N$ and $\mathbf u_k\in\N^d$ such that
	\[
	\frac{|S\cap Q_{R_k}(\mathbf u_k)|}{|Q_{R_k}|}>\alpha-\varepsilon_k,
	\]
	and pass to a subsequence so that $R_k\ge k^4$.
	
	\smallskip
	\noindent\textbf{Step 1: far--out witnesses.}
	Let $q_k:=\lfloor R_k/N_k\rfloor$ and $R_k':=q_kN_k$, so $0\le R_k-R_k'<N_k$ and
	\[
	|Q_{R_k}|-|Q_{R_k'}|=R_k^d-(R_k')^d\le d(R_k-R_k')R_k^{d-1}\le dN_kR_k^{d-1}.
	\]
	Hence
	\[
	\frac{|S\cap Q_{R_k'}(\mathbf u_k)|}{|Q_{R_k'}|}
	\ge
	\frac{|S\cap Q_{R_k}(\mathbf u_k)|-(|Q_{R_k}|-|Q_{R_k'}|)}{|Q_{R_k'}|}
	>
	(\alpha-\varepsilon_k)-\frac{dN_kR_k^{d-1}}{(R_k')^d}.
	\]
	Since $R_k'\ge R_k-N_k$ and $R_k\ge k^4$, the last term is $\le \varepsilon_k$ for all large $k$.
	Thus, after discarding finitely many $k$,
	\[
	\frac{|S\cap Q_{R_k'}(\mathbf u_k)|}{|Q_{R_k'}|}>\alpha-2\varepsilon_k.
	\]
	
	Partition $Q_{R_k'}(\mathbf u_k)$ into the $q_k^d$ disjoint cubes
	$Q_{N_k}(\mathbf u_k+N_k\mathbf m)$, $\mathbf m\in\{0,\dots,q_k-1\}^d$.
	Let $\mathcal I_k$ be the set of \emph{$M_k$--interior} indices, i.e.
	$\min_i((u_k)_i+N_km_i)\ge M_k$, and let $\mathcal B_k:=\mathcal I_k^c$.
	For each coordinate $i$, there are at most $\lceil M_k/N_k\rceil\le k+1$ boundary layers, hence
	\[
	|\mathcal B_k|\le d(k+1)q_k^{d-1}.
	\]
	Since $q_k\ge R_k/N_k-1\ge k^3-1$, we have $|\mathcal B_k|/q_k^d\le 2d/k^2$ for all large $k$.
	
	For $\mathbf m$ define
	\[
	\rho_{\mathbf m}:=\frac{|S\cap Q_{N_k}(\mathbf u_k+N_k\mathbf m)|}{|Q_{N_k}|}\in[0,1].
	\]
	Averaging over all $q_k^d$ cubes gives
	\[
	\frac1{q_k^d}\sum_{\mathbf m}\rho_{\mathbf m}
	=
	\frac{|S\cap Q_{R_k'}(\mathbf u_k)|}{|Q_{R_k'}|}
	>
	\alpha-2\varepsilon_k.
	\]
	Using $\sum_{\mathbf m\in\mathcal B_k}\rho_{\mathbf m}\le |\mathcal B_k|$ and
	$|\mathcal I_k|=q_k^d-|\mathcal B_k|$, we get
	\[
	\frac1{|\mathcal I_k|}\sum_{\mathbf m\in\mathcal I_k}\rho_{\mathbf m}
	\ge
	\frac{q_k^d(\alpha-2\varepsilon_k)-|\mathcal B_k|}{q_k^d-|\mathcal B_k|}
	\ge
	\alpha-2\varepsilon_k-\frac{|\mathcal B_k|}{q_k^d-|\mathcal B_k|}
	\ge
	\alpha-2\varepsilon_k-\frac{4d}{k^2}
	\]
	for all large $k$. Hence there exists $\mathbf m_k\in\mathcal I_k$ with
	\[
	\rho_{\mathbf m_k}\ge \alpha-2\varepsilon_k-\frac{4d}{k^2}.
	\]
	Set $\mathbf v_k:=\mathbf u_k+N_k\mathbf m_k$. Then $\min_i(v_k)_i\ge M_k\to\infty$ and
	\[
	\frac{|S\cap Q_{N_k}(\mathbf v_k)|}{|Q_{N_k}|}=\rho_{\mathbf m_k}\longrightarrow \alpha.
	\]
	
	\smallskip
	\noindent\textbf{Step 2: define $S'$.}
	Choose an increasing subsequence $k_j$ such that the cubes
	\[
	B_j:=Q_{N_{k_j}}(\mathbf v_{k_j})
	\]
	are pairwise disjoint and satisfy $\min_i(\mathbf v_{k_j})_i\ge 10^jN_{k_j}$
	(this is possible since $\min_i(\mathbf v_k)_i\to\infty$).
	Define
	\[
	S':=\bigcup_{j\ge1}\bigl(S\cap B_j\bigr)\subset S.
	\]
	
	\smallskip
	\noindent\textbf{Step 3: verify densities.}
	Since $S'\cap B_j=S\cap B_j$,
	\[
	\dB(S')\ge \limsup_{j\to\infty}\frac{|S'\cap B_j|}{|B_j|}
	=\limsup_{j\to\infty}\frac{|S\cap B_j|}{|B_j|}=\alpha,
	\]
	and $S'\subset S$ gives $\dB(S')\le \dB(S)=\alpha$, hence $\dB(S')=\alpha$.
	
	For $\dbar(S')$, let $N$ be large and let $j=j(N)$ be maximal with $B_j\cap Q_N\neq\emptyset$.
	Then $S'\cap Q_N$ meets only $B_1,\dots,B_j$, so
	\[
	|S'\cap Q_N|\le \sum_{m\le j}|B_m|\le j\,N_{k_j}^d.
	\]
	Moreover $B_j\cap Q_N\neq\emptyset$ implies $N\ge \min_i(\mathbf v_{k_j})_i-N_{k_j}\ge
	\frac12\min_i(\mathbf v_{k_j})_i$ for all large $j$, hence
	\[
	\frac{|S'\cap Q_N|}{|Q_N|}
	\le
	\frac{j\,N_{k_j}^d}{N^d}
	\le
	\frac{j\,N_{k_j}^d}{(\frac12\min_i(\mathbf v_{k_j})_i)^d}
	\le
	2^d j\Bigl(\frac{N_{k_j}}{\min_i(\mathbf v_{k_j})_i}\Bigr)^d
	\le
	2^d j\,10^{-dj}\xrightarrow[N\to\infty]{}0.
	\]
	Thus $\dbar(S')=0$, and the cubes $B_j$ witness
	$|S'\cap B_j|/|B_j|\to\alpha$ with $\min_i(\mathbf v_{k_j})_i\to\infty$.
\end{proof}

The following two lemmas,
Lemma~\ref{lem:Ck-density} and Lemma~\ref{lem:balanced-density},
are used in the proof of Theorem~\ref{thm:ubd-main}.
Lemma~\ref{lem:Ck-density} provides a quantitative estimate for the proportion of
uniformly $K$--balanced vectors inside large boxes, while
Lemma~\ref{lem:balanced-density} ensures that any set of positive upper density
contains a positive--density subset with bounded anisotropy.
Together, they allow us to reduce the upper--bound argument to uniformly balanced
regimes, where the covering and growth estimates apply effectively.

 \begin{lemma}[Density of $\mathcal C_K$ in $Q_N$]\label{lem:Ck-density}
Fix $d\ge 1$ and $K\ge 1$. Let
\[
Q_N:=[1,N]^d\cap\N^d,
\qquad
\mathcal C_K
:=
\Bigl\{\mathbf a\in\N^d:\ \frac{\max_j a_j}{\min_j a_j}\le K\Bigr\}.
\]
Then
\[
\lim_{N\to\infty}\frac{|\mathcal C_K\cap Q_N|}{|Q_N|}
=
\Bigl(1-\frac1K\Bigr)^{d-1}.
\]
Equivalently,
\[
\lim_{N\to\infty}\frac{|Q_N\setminus \mathcal C_K|}{|Q_N|}
=
1-\Bigl(1-\frac1K\Bigr)^{d-1}.
\]
\end{lemma}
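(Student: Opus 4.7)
The plan is to recast the discrete count as a Riemann sum for the Lebesgue measure of a scale-invariant region of $(0,1]^d$, and then evaluate that measure by a symmetric decomposition and an elementary one-variable integration.

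First, I would exploit the scale invariance of the defining inequality: $\mathbf a\in\mathcal C_K\cap Q_N$ if and only if $\mathbf a/N\in A_K$, where
$$A_K:=\{\mathbf y\in(0,1]^d:\ \max_j y_j\le K\min_j y_j\}.$$
Consequently $|\mathcal C_K\cap Q_N|/|Q_N|=N^{-d}\sum_{\mathbf a\in Q_N}\mathbf 1_{A_K}(\mathbf a/N)$ is the standard Riemann sum for $\mathbf 1_{A_K}$ on the grid $\tfrac1N\{1,\dots,N\}^d$. Since $\partial A_K$ is contained in the finite union of hyperplanes $\{y_i=Ky_j\}$ together with $\partial[0,1]^d$, it has $d$-dimensional Lebesgue measure zero; thus $A_K$ is Jordan measurable and the Riemann sum above converges to $|A_K|$ as $N\to\infty$.

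Second, I would compute $|A_K|$ by coordinate symmetry. Partition $(0,1]^d$ (up to a Lebesgue null set of ties) into the $d$ congruent regions $R_i:=\{\mathbf y:y_i\le y_j\ \text{for all }j\}$; by the symmetry of $A_K$ under permutation of coordinates, $|A_K|=d\,|A_K\cap R_1|$. On $R_1$, with $y_1$ fixed, the slice $A_K\cap R_1\cap\{y_1=\mathrm{const}\}$ imposes $y_j\in[y_1,\min(Ky_1,1)]$ for $j\ge 2$, so it has $(d-1)$-dimensional measure $(\min(Ky_1,1)-y_1)^{d-1}$. Splitting the integral at $y_1=1/K$ yields
$$|A_K\cap R_1|=(K-1)^{d-1}\!\int_0^{1/K}\! y_1^{d-1}\,dy_1+\int_{1/K}^{1}(1-y_1)^{d-1}\,dy_1=\frac1d\Bigl(1-\tfrac1K\Bigr)^{d-1}$$
after evaluating the two elementary integrals. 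Multiplying by $d$ gives $|A_K|=(1-1/K)^{d-1}$, and the second identity of the lemma is immediate by taking complements in $Q_N$.

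No step is a genuine obstacle; the only subtlety is the justification of the Riemann-sum convergence, which is precisely why Jordan measurability of $A_K$ needs to be recorded. The remaining evaluation is a routine one-variable computation, and sanity checks ($d=1$ gives $1$, $K=1$ gives $0$) agree with the trivial regimes.
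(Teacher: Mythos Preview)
Your proof is correct and takes a genuinely different route from the paper's. The paper stays entirely in the discrete setting: it sums over $m=\min_j a_j$, counts for each $m$ the points with $\mathbf a\in[m,\min(Km,N)]^d$ having minimum exactly $m$ (which is $L_m^d-(L_m-1)^d$), splits the sum at $M=\lfloor N/K\rfloor$, and then extracts the limit via the asymptotic expansion $(x+1)^d-x^d=dx^{d-1}+O_d(x^{d-2})$ together with $\sum_{m\le M}m^{d-1}=M^d/d+O_d(M^{d-1})$. You instead pass to the continuum immediately by recognising the count as a Riemann sum for the Jordan-measurable region $A_K\subset(0,1]^d$, and then compute $|A_K|$ by the symmetric decomposition over which coordinate is smallest.

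The two computations are structurally parallel (your integration over $y_1=\min_j y_j$ with the split at $1/K$ mirrors exactly the paper's sum over $m$ with the split at $M$), but yours is cleaner: the error terms are absorbed once into the Jordan-measurability step rather than carried through the calculation, and the final evaluation is two elementary integrals rather than an asymptotic expansion plus a power-sum estimate. The paper's approach has the minor advantage of being fully self-contained without appealing to Riemann integrability, but your argument is shorter and more transparent.
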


 \begin{proof}
 	Write $|Q_N|=N^d$ and put $M:=\lfloor N/K\rfloor$.  For $\mathbf a\in Q_N$ let
 	$m:=\min_j a_j$.  The condition $\mathbf a\in\mathcal C_K$ is equivalent to
 	$\max_j a_j\le Km$, i.e.\ $\mathbf a\in [m,U_m]^d$ with $U_m:=\min\{Km,N\}$.
 	Let $L_m:=U_m-m+1$.  Then the number of $\mathbf a\in\mathcal C_K\cap Q_N$ with
 	$\min_j a_j=m$ equals $L_m^d-(L_m-1)^d$, hence
 	\[
 	|\mathcal C_K\cap Q_N|=\sum_{m=1}^N\bigl(L_m^d-(L_m-1)^d\bigr).
 	\]
 	
 	If $m\le M$ then $U_m=Km$ and $L_m=(K-1)m+1$; if $m>M$ then $U_m=N$ and
 	$L_m=N-m+1$.  Splitting the sum and telescoping the second part gives
 	\[
 	|\mathcal C_K\cap Q_N|
 	=\sum_{m=1}^M\Bigl(((K-1)m+1)^d-((K-1)m)^d\Bigr)+(N-M)^d.
 	\]
 	Using $(x+1)^d-x^d=d x^{d-1}+O_d(x^{d-2})$ and $\sum_{m\le M}m^{d-1}=\frac{M^d}{d}+O_d(M^{d-1})$,
 	we obtain
 	\[
 	\sum_{m=1}^M\Bigl(((K-1)m+1)^d-((K-1)m)^d\Bigr)
 	=(K-1)^{d-1}M^d+O_{d,K}(M^{d-1}).
 	\]
 	Therefore
 	\[
 	\frac{|\mathcal C_K\cap Q_N|}{N^d}
 	=
 	(K-1)^{d-1}\Bigl(\frac{M}{N}\Bigr)^d+\Bigl(1-\frac{M}{N}\Bigr)^d+O_{d,K}\Bigl(\frac1N\Bigr).
 	\]
 	Since $M/N\to 1/K$, the limit equals
 	\[
 	(K-1)^{d-1}\Bigl(\frac1K\Bigr)^d+\Bigl(1-\frac1K\Bigr)^d
 	=\Bigl(1-\frac1K\Bigr)^{d-1}.
 	\qedhere\]
 \end{proof}
 
\begin{lemma}[Positive density forces bounded anisotropy]\label{lem:balanced-density}
Let $A\subset\N^d$ satisfy
\[
\overline d(A):=\limsup_{N\to\infty}\frac{|A\cap Q_N|}{|Q_N|}>0,
\qquad
Q_N=[1,N]^d\cap\N^d.
\]
Then there exists $K\ge 1$ such that
\[
\overline d\bigl(A\cap\mathcal C_K\bigr)>0,
\qquad
\mathcal C_K=\Bigl\{\mathbf a\in\N^d:\frac{\max_j a_j}{\min_j a_j}\le K\Bigr\}.
\]
\end{lemma}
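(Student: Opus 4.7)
The plan is to reduce to Lemma~\ref{lem:Ck-density} by observing that the complement of $\mathcal C_K$ in $Q_N$ is asymptotically small when $K$ is large, so intersecting $A$ with $\mathcal C_K$ cannot destroy more than a small fraction of its density. Concretely, set $\delta:=\overline d(A)>0$ and choose $K=K(\delta,d)$ so large that $1-(1-1/K)^{d-1}<\delta/2$; this is possible since $(1-1/K)^{d-1}\to 1$ as $K\to\infty$.

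Next I would use the inclusion--exclusion bound
\[
|A\cap \mathcal C_K\cap Q_N|\ \ge\ |A\cap Q_N|\ -\ |Q_N\setminus \mathcal C_K|,
\]
valid for every $N$. Dividing by $|Q_N|$ and passing to the limsup along a subsequence realizing $\overline d(A)=\delta$, Lemma~\ref{lem:Ck-density} gives
\[
\overline d(A\cap \mathcal C_K)
\ \ge\ \delta\ -\ \Bigl(1-\Bigl(1-\frac1K\Bigr)^{d-1}\Bigr)
\ >\ \delta - \frac{\delta}{2}\ =\ \frac{\delta}{2}\ >\ 0,
\]
which is the desired conclusion. Strictly speaking, to subtract the limit of $|Q_N\setminus\mathcal C_K|/|Q_N|$ from a limsup one should either note that this complement density is an actual limit (so the sum of limsup and limit behaves as in the usual ``limsup plus limit'' rule) or simply take $N$ along a subsequence on which both quantities converge.

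There is essentially no obstacle here: Lemma~\ref{lem:Ck-density} already does the quantitative work, and the only observation needed is that the \emph{complement} density $1-(1-1/K)^{d-1}$ tends to $0$ as $K\to\infty$, so one can absorb it into a fraction of $\delta$. The proof is three lines once $K$ is chosen, and $K$ depends only on $d$ and on $\overline d(A)$.
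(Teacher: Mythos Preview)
Your proof is correct and essentially identical to the paper's: both choose $K$ so that the complement density $1-(1-1/K)^{d-1}$ is a small fraction of $\delta$, apply the inclusion--exclusion bound $|A\cap\mathcal C_K\cap Q_N|\ge|A\cap Q_N|-|Q_N\setminus\mathcal C_K|$, and evaluate along a subsequence realizing $\overline d(A)$. The only cosmetic difference is that the paper uses the threshold $\delta/4$ (being slightly more explicit about an $N_0$ after which the complement density is below $\delta/2$), whereas you use $\delta/2$ directly and invoke the fact that the complement density is an actual limit; both are fine.
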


\begin{proof}
	Let $\delta:=\overline d(A)>0$.  Choose $K$ so that
	\[
	1-\Bigl(1-\frac1K\Bigr)^{d-1}<\frac{\delta}{4}.
	\]
	By Lemma~\ref{lem:Ck-density}, there exists $N_0$ such that for all $N\ge N_0$,
	\[
	\frac{|Q_N\setminus\mathcal C_K|}{|Q_N|}<\frac{\delta}{2}.
	\]
	Pick $N_i\to\infty$ with $|A\cap Q_{N_i}|/|Q_{N_i}|\to\delta$ and $N_i\ge N_0$, so for large $i$,
	$|A\cap Q_{N_i}|/|Q_{N_i}|>3\delta/4$.  Then
	\[
	\frac{|A\cap\mathcal C_K\cap Q_{N_i}|}{|Q_{N_i}|}
	\ge
	\frac{|A\cap Q_{N_i}|}{|Q_{N_i}|}-\frac{|Q_{N_i}\setminus\mathcal C_K|}{|Q_{N_i}|}
	\ge \frac{3\delta}{4}-\frac{\delta}{2}=\frac{\delta}{4}.
	\]
	Taking $\limsup_i$ gives $\overline d(A\cap\mathcal C_K)\ge \delta/4>0$.
\end{proof}

\subsection{Almost Lipschitz maps and elimination maps}
\label{subsec:almost-lip}

We recall the notion of almost Lipschitz maps, which will be used to control
the possible loss of Hausdorff dimension when digits are eliminated from
symbolic addresses.

\begin{definition}
Let $(X,d)$ be a compact metric space, $F\subset X$, and $f:F\to X$.
We say that
\begin{itemize}[leftmargin=2em]
\item
$f$ is \emph{Hölder continuous with exponent $\gamma\in(0,1]$} if there exists
$C>0$ such that
\[
d\bigl(f(x),f(y)\bigr)\le C\, d(x,y)^\gamma
\quad\text{for all }x,y\in F;
\]
\item
$f$ is \emph{almost Lipschitz} if it is Hölder continuous with exponent
$\gamma$ for every $\gamma\in(0,1)$.
\end{itemize}
\end{definition}

The following dimension estimate is standard.

\begin{lemma}\label{lem:almost-lip}
Let $(X,d)$ be a compact metric space, $F\subset X$, and let $f:F\to X$ be an
almost Lipschitz map.
Then
\[
\haus f(F)\le \haus F .
\]
\end{lemma}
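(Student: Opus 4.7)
The plan is to reduce the almost Lipschitz estimate to the standard Hölder bound on Hausdorff dimension and then let the Hölder exponent tend to $1$. Concretely, I would first recall the textbook inequality: if $g:F\to X$ is $\gamma$--Hölder with constant $C$, then for every $s\ge 0$ one has
\[
  \mathcal H^{s/\gamma}\bigl(g(F)\bigr)\le C^{s/\gamma}\,\mathcal H^{s}(F),
\]
so in particular $\dim_H g(F)\le \gamma^{-1}\dim_H F$. The proof of this is the one-line covering argument: if $\{U_i\}$ covers $F$ with $\diam U_i\le\delta$, then $\{g(F\cap U_i)\}$ covers $g(F)$ with $\diam g(F\cap U_i)\le C(\diam U_i)^\gamma$, and summing gives the measure comparison at scale $C\delta^\gamma\to 0$.

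Next, I would apply this to our $f$, which is $\gamma$--Hölder for every $\gamma\in(0,1)$. If $\dim_H F=\infty$ the conclusion is trivial, so assume $\dim_H F<\infty$ and fix any $s>\dim_H F$. Then $\mathcal H^{s}(F)=0$, so the displayed inequality yields $\mathcal H^{s/\gamma}(f(F))=0$ for each $\gamma\in(0,1)$, hence $\dim_H f(F)\le s/\gamma$.

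Finally, I would pass to the limit in two stages: letting $\gamma\uparrow 1$ gives $\dim_H f(F)\le s$, and then letting $s\downarrow \dim_H F$ gives $\dim_H f(F)\le \dim_H F$, as required.

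There is no real obstacle here; the only mild subtlety is that the Hölder constant $C_\gamma$ depends on $\gamma$ and may blow up as $\gamma\uparrow 1$, but this is harmless because we only use that $\mathcal H^{s}(F)=0$ forces the image measure to vanish regardless of the constant. This is exactly why the argument goes through $s$--null sets rather than comparing dimensions directly.
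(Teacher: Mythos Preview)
Your argument is correct and is exactly the standard covering/H\"older inequality proof that the cited reference (Falconer, Proposition~3.3) gives; the paper simply defers to that reference rather than spelling it out. Your remark that the blow-up of $C_\gamma$ is harmless because one works through $s$-null sets is precisely the point.
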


\begin{proof}
See \cite[Proposition~3.3]{Falconer2014}.
\end{proof}

In the sequel, we construct \emph{elimination maps} which erase prescribed
digits from symbolic addresses.
Lemma~\ref{lem:almost-lip} ensures that these operations do not increase the
Hausdorff dimension.

\section{Upper bounds on $\dim_H$ for digit--restricted sets in $\N^d$}
\label{sec:upper-bound}

Let $S\subset\N^d$. Recall that
\[
  \widetilde{\mathcal E}_S
  :=
  \{\mathbf x\in(0,1)^d:\ \mathbf a_n(\mathbf x)\in S \text{ for all } n\ge1\},
\]
and its escaping part
\[
  \mathcal E_S
  :=
  \Bigl\{\mathbf x\in\widetilde{\mathcal E}_S:\ a_n(x_j)\to\infty \text{ for each } 1\le j\le d\Bigr\}.
\]
We also consider the vector-injective subregime
\[
  \mathcal E_S^{\mathrm{vec}}
  :=
  \Bigl\{\mathbf x\in\widetilde{\mathcal E}_S:\ \mathbf a_n(\mathbf x)\neq \mathbf a_m(\mathbf x)\ (n\neq m),\
  a_n(x_j)\to\infty\ (1\le j\le d)\Bigr\}.
\]
Clearly, $\mathcal E_S^{\mathrm{vec}}\subset \mathcal E_S\subset \widetilde{\mathcal E}_S$.
Hence any upper bound for $\dim_H(\widetilde{\mathcal E}_S)$ yields an upper bound for
$\dim_H(\mathcal E_S)$ and $\dim_H(\mathcal E_S^{\mathrm{vec}})$, while any lower bound for
$\dim_H(\mathcal E_S^{\mathrm{vec}})$ (or $\dim_H(\mathcal E_S)$) also yields a lower bound for
$\dim_H(\widetilde{\mathcal E}_S)$.

\subsection{Main upper bounds}
\label{subsec:upper-main-statements}

We collect the main upper bounds proved in this section. Proofs are given in the
subsections that follow.

\begin{theorem}\label{thm:ubd-main}
If $S \subset \mathbb{N}^d$ has positive upper  density, then
\[
 \dim_H(\mathcal E_S^{\mathrm{vec}})= \dim_H(\mathcal{E}_S)=\frac{d}{2}.
\]
\end{theorem}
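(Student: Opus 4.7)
The plan is to sandwich both dimensions at $d/2$: a universal upper bound $\dim_H(\mathcal E_S)\le d/2$ automatically controls the smaller set $\mathcal E_S^{\mathrm{vec}}\subset\mathcal E_S$, while a Moran construction inside $\mathcal E_S^{\mathrm{vec}}$ of dimension at least $d/2-\varepsilon$ for every $\varepsilon>0$ supplies the matching lower bound for both.

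For the upper bound I would invoke the universal covering bound of Theorem~\ref{thm:intro-universal}, proved later in this section. The escape hypothesis embeds $\mathcal E_S\subset\mathcal E_{\N^d}\subset\bigcap_{N\ge 1}\bigcup_{M\ge 1}\{\mathbf x:a_n(x_j)\ge N\text{ for all }n>M,\ 1\le j\le d\}$; each set in the union is, after fixing a prefix, an affine image of $B_N^d$ where $B_N\subset(0,1)$ consists of irrationals whose continued-fraction digits are all $\ge N$. Each $B_N$ is the limit set of a uniformly contracting infinite conformal IFS with $\dim_H(B_N)=\dim_P(B_N)$, so Marstrand's product inequality yields $\dim_H(B_N^d)=d\dim_H(B_N)$; since $\dim_H(B_N)\to 1/2$ as $N\to\infty$, infimizing over $N$ gives $\dim_H(\mathcal E_S)\le d/2$.

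For the lower bound I would explicitly construct a Moran fractal $C\subset\mathcal E_S^{\mathrm{vec}}$. First Lemma~\ref{lem:balanced-density} supplies $K\ge 1$ with $\dbar(S\cap\mathcal C_K)>0$, and since $\mathcal E_{S\cap\mathcal C_K}^{\mathrm{vec}}\subset\mathcal E_S^{\mathrm{vec}}$ we may replace $S$ by $S\cap\mathcal C_K$ and assume $S\subset\mathcal C_K$. Pick rapidly growing scales $R_1\ll R_2\ll\cdots$ with $|S\cap Q_{R_n}|\ge\tfrac12\dbar(S)R_n^d$ and a slow threshold $T_n\to\infty$ with $T_n=o(R_n)$, so that the digit pools
\[
D_n:=\{\mathbf a\in S\cap Q_{R_n}:\ \min_{1\le j\le d}a_j\ge T_n\}
\]
still satisfy $|D_n|\gtrsim R_n^d$. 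Branching through $D_n$ at each level produces a Moran Cantor set $C$; sibling cylinders are separated by Lemma~\ref{lem:sibling_cylinder_separation} and their mesh decays exponentially by Remark~\ref{rmk:uniform-cylinder-decay-d}, while the rapid growth of $R_n$ automatically enforces coordinatewise escape and vector-injectivity, so $C\subset\mathcal E_S^{\mathrm{vec}}$. Balance confines $D_n\subset[T_n,KR_n]^d$, so each level-$n$ cylinder is comparable to a cube of side $\prod_{i\le n}R_i^{-2}$; placing uniform mass $\prod_{i\le n}|D_i|^{-1}$ on each such cylinder and using $|D_i|\gtrsim R_i^d$ one obtains a Frostman bound $\mu(B(\mathbf x,r))\lesssim r^{d/2-\varepsilon}$ at all scales, so the mass distribution principle gives $\dim_H(C)\ge d/2-\varepsilon$. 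Letting $\varepsilon\downarrow 0$ finishes the proof.

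The main obstacle is aligning the two bounds exactly at $d/2$, which forces the reduction to the balanced regime: for unbalanced $S$ the cylinders are anisotropic rectangles whose covering cost is not captured by the single counting exponent $|S\cap Q_N|/N^d$, so a direct Moran estimate may miss the correct dimension. Lemma~\ref{lem:balanced-density} is the structural pivot that preserves positive density while restoring the balance needed to align the construction with the universal upper bound.
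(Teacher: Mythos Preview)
Your architecture matches the paper's: the upper bound via tail sets $E_{\ge N}^d$ and $\dim_H(E_{\ge N})\to\tfrac12$ is exactly how the paper proceeds (packaged as Theorem~\ref{thm:full-restriction}), and the reduction to the balanced regime via Lemma~\ref{lem:balanced-density} is the same pivot. For the lower bound the paper invokes Proposition~\ref{prop:extreme-seed}, whose Moran construction confines the level-$n$ digits to thin \emph{annuli} $\{t^n\le\|\mathbf v\|_\infty<Lt^n\}$ rather than your solid pools $D_n\subset[T_n,R_n]^d$.

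That difference matters, and your Frostman step has a gap. Balance makes each individual cylinder cube-like, but it does not make all level-$n$ cylinders comparable to one another: since digits in $D_n$ range over $[T_n,R_n]$, the level-$n$ side lengths vary over the whole interval $[\prod_i R_i^{-2},\ \prod_i T_i^{-2}]$, so ``each level-$n$ cylinder is comparable to a cube of side $\prod_{i\le n}R_i^{-2}$'' is false and the claimed bound $\mu(B(\mathbf x,r))\lesssim r^{d/2-\varepsilon}$ does not follow from uniform mass alone. The paper's annulus choice forces all level-$n$ digits to have magnitude $\asymp t^n$, so cylinders at a given level are genuinely comparable and the mass-distribution/Moran argument goes through. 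You could instead route your boxes through Lemma~\ref{lem:moran-lb-nd} (which tolerates non-uniform sizes via the separation scale $\delta_n$), but then the liminf reaches $d/2$ only when $\log R_n=o\bigl(\sum_{i<n}\log R_i\bigr)$, which sits uneasily with ``rapidly growing $R_n$'' and with the fact that positive upper density may only be witnessed along a very sparse sequence of scales. A smaller point: you must $2$-thin $D_n$ in $\|\cdot\|_\infty$ (at a $3^{-d}$ cost) before Lemma~\ref{lem:sibling_cylinder_separation} applies.
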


\begin{remark}\label{rem:product-structure}
Assume that $S=S_1\times\cdots\times S_d\subset\N^d$. Then
\[
  \mathcal E_S=\prod_{j=1}^d \mathcal E_{S_j},
\]
and the standard product inequalities give
\[
  \sum_{j=1}^d \dim_H(\mathcal E_{S_j})
  \le
  \dim_H(\mathcal E_S)
  \le
  \sum_{j=1}^d \dim_P(\mathcal E_{S_j}).
\]
In particular, if $\dim_H(\mathcal E_{S_j})=\alpha_j$ for all $j$, then
$\dim_H(\mathcal E_S)\ge \sum_{j=1}^d \alpha_j$, with equality provided
$\dim_P(\mathcal E_{S_j})=\alpha_j$ for all $j$.

It is not clear in general whether $\dim_P(\mathcal E_{S_j})=\dim_H(\mathcal E_{S_j})$
holds for digit-restricted continued-fraction sets. Consequently, even in the product case
$S=S_1\times\cdots\times S_d$, we do not know whether the additivity
\[
  \dim_H(\mathcal E_S)=\sum_{j=1}^d \alpha_j
\]
must hold under the sole assumption $\dim_H(\mathcal E_{S_j})=\alpha_j$.
\end{remark}

Fix an integer $N \ge 2$. Define
\[
E_{\ge N}:=\bigl\{x\in(0,1): a_n(x)\ge N \text{ for all }n\ge1\bigr\},
\qquad
\widetilde{\mathcal{E}}_N := (E_{\ge N})^d\subset(0,1)^d.
\]

\begin{theorem}[Asymptotics for the full restriction $a_n\ge N$]
\label{thm:full-restriction}
For every $\varepsilon>0$ there exists $N_0(\varepsilon)$ such that for all $N\ge N_0(\varepsilon)$,
\[
 \frac d2 \leq \dim_H(\widetilde{\mathcal{E}}_N) \le \frac d2 + \varepsilon.
\]
In particular, $\dim_H(\widetilde{\mathcal{E}}_N)\to d/2$ as $N\to\infty$.
\end{theorem}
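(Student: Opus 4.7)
The plan is to exploit the product structure $\widetilde{\mathcal E}_N=(E_{\ge N})^d$ and reduce the question to one--dimensional asymptotics for $E_{\ge N}$. For the lower bound I would apply Theorem~\ref{thm:ubd-main} to the set $S_N:=\{N,N+1,\ldots\}^d\subset\N^d$: the cardinality ratio $|S_N\cap Q_R|/|Q_R|=((R-N+1)/R)^d\to 1$ forces $\dbar(S_N)=1>0$, so $\dim_H(\mathcal E_{S_N})=d/2$, and the inclusion $\mathcal E_{S_N}\subset\widetilde{\mathcal E}_N$ yields $\dim_H(\widetilde{\mathcal E}_N)\ge d/2$ for every $N\ge 2$, with no largeness assumption on $N$.

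For the upper bound I would combine the trivial $\dim_H\le\overline{\dim}_B$ with the classical product estimate $\overline{\dim}_B(A\times B)\le\overline{\dim}_B A+\overline{\dim}_B B$ iterated $d$ times, reducing the theorem to the one--dimensional claim $\overline{\dim}_B(E_{\ge N})\to 1/2$ as $N\to\infty$. Granted this, for any $\varepsilon>0$ there is $N_0(\varepsilon)$ with $\overline{\dim}_B(E_{\ge N})\le \tfrac12+\tfrac{\varepsilon}{d}$ whenever $N\ge N_0$, and then
\[
\dim_H(\widetilde{\mathcal E}_N)\ \le\ \overline{\dim}_B((E_{\ge N})^d)\ \le\ d\,\overline{\dim}_B(E_{\ge N})\ \le\ d/2+\varepsilon.
\]

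The one--dimensional claim I would prove by a self--similar bootstrap on $F(\delta):=$ minimum number of $\delta$--intervals covering $E_{\ge N}$, using $E_{\ge N}=\bigcup_{a\ge N}\phi_a(E_{\ge N})$ with $|\phi_a'|\asymp a^{-2}$ for $\phi_a(y)=1/(a+y)$. Cylinders $\phi_a(E_{\ge N})$ with $a\ge\delta^{-1/2}$ all lie inside $(0,\delta^{1/2})$ and are absorbed into a single cover of that interval by $\lceil\delta^{-1/2}\rceil$ intervals of length $\delta$; cylinders with $N\le a<\delta^{-1/2}$ are scaled copies of $E_{\ge N}$ by factor $\asymp a^{-2}$, each contributing $F(C\delta a^2)$ to the total. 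This yields the recursion
\[
F(\delta)\ \le\ C\,\delta^{-1/2}\ +\ \sum_{N\le a<\delta^{-1/2}}F(C\delta a^2).
\]
For fixed $s>1/2$, choose $N$ so large that $\sigma_N(s):=\sum_{a\ge N}a^{-2s}<1$ (possible since $\sigma_N(s)\to 0$); then the ansatz $F(\delta)\le C'\delta^{-s}$ is self--consistent once $C'(1-C^{-s}\sigma_N(s))\ge C\delta^{s-1/2}$, which holds for $\delta$ small and $C'$ large. Hence $\overline{\dim}_B(E_{\ge N})\le s$, and letting $s\to 1/2^+$ closes the asymptotic.

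The main obstacle is closing the bootstrap uniformly: the countable IFS $\{\phi_a:a\ge N\}$ is not uniformly contracting, so the tail $a\ge\delta^{-1/2}$ must be handled collectively rather than individually, and the exponent $-1/2$ arising from covering $(0,\delta^{1/2})$ by $\delta$--intervals is exactly what pins the critical threshold $1/2$. An equivalent but more abstract route would invoke Mauldin--Urba\'nski theory for the conformal IFS $\{\phi_a:a\ge N\}$: its finiteness parameter equals $1/2$ and its Bowen root $s_N$ satisfies $s_N\searrow 1/2$ as $N\to\infty$, giving $\dim_H(E_{\ge N})=\dim_P(E_{\ge N})=s_N$, after which the Hausdorff--packing product inequality $\dim_H(A^d)\le\dim_H A+(d-1)\dim_P A$ directly yields $\dim_H(\widetilde{\mathcal E}_N)\le ds_N\to d/2$.
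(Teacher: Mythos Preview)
Your alternative route at the end (Mauldin--Urba\'nski gives $\dim_H(E_{\ge N})=\dim_P(E_{\ge N})=s_N\searrow 1/2$, then the Hausdorff--packing product inequality) is exactly the paper's argument: it quotes Mauldin--Urba\'nski for $\dim_H=\dim_B=\dim_P$ on $E_{\ge N}$, iterates $\dim_H(A\times B)\le\dim_H A+\dim_P B$, and finishes with Good's explicit estimate for $\dim_H(E_{\ge N})$. Your primary Route~A is a genuinely different, more self-contained upper bound: instead of citing the conformal IFS machinery you attempt a direct box-dimension bootstrap. The idea is sound and recovers the pressure-root picture by hand, but the ``self-consistency of the ansatz'' step is not yet a proof: you need an actual induction (e.g.\ on a geometric sequence of scales, or via $\sup_{\delta\le\delta_0}F(\delta)\delta^s$) rather than checking that $F(\delta)\le C'\delta^{-s}$ reproduces itself. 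The tail/head split at $a\sim\delta^{-1/2}$ is the right mechanism and the exponent bookkeeping is correct.

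One logical point on the lower bound: in the paper's order, Theorem~\ref{thm:ubd-main} is proved \emph{after} Theorem~\ref{thm:full-restriction}, and its upper-bound half explicitly invokes Theorem~\ref{thm:full-restriction}. So invoking Theorem~\ref{thm:ubd-main} as a black box here is circular. The circularity is only superficial, since you need only the inequality $\dim_H(\mathcal E_{S_N})\ge d/2$, which in the paper comes from the extreme-seed construction (Proposition~\ref{prop:extreme-seed}) and does not use Theorem~\ref{thm:full-restriction}; still, you should cite that directly. The paper instead gets the lower bound from Good's $\dim_H(E_\infty)=1/2$ and the additive lower product inequality $\dim_H(E_\infty^d)\ge d\cdot\dim_H(E_\infty)$, which is lighter.
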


  \begin{theorem} \label{thm:zeta-vs-singular}
  Let $S\subset\N^d$ be infinite. For $s\in(0,d]$ let $\phi^s$ be the singular value
  potential defined in \eqref{eq:def-phis} below, and set
  \[
  \Phi_S(s):=\sum_{\mathbf a\in S}\phi^s(\mathbf a),
  \qquad
  s_\sharp:=\inf\{s>0:\Phi_S(s)<1\},
  \qquad
  s_\ast:=\inf\{s>0:\Phi_S(s)<\infty\}.
  \]
  Define also the multivariate Dirichlet series
  \[
  \zeta_S(\boldsymbol{\sigma})
  :=
  \sum_{\mathbf a\in S}\prod_{j=1}^d a_j^{-\sigma_j}
  \quad(\boldsymbol{\sigma}\in(0,\infty)^d),
  \]
  its convergence domain $\mathcal D_S:=\{\boldsymbol{\sigma}:\zeta_S(\boldsymbol{\sigma})<\infty\}$,
  and the trace exponent
  \[
  \Lambda_S:=\inf_{\boldsymbol{\sigma}\in\mathcal D_S}(\sigma_1+\cdots+\sigma_d).
  \]
  Then:
  \begin{enumerate}[label=\textup{(\arabic*)},itemsep=2pt,topsep=2pt]
  \item $\dim_H(\widetilde{\mathcal{E}}_S)\le s_\sharp$.
    \item $\dim_H(\mathcal E_S^{\mathrm{vec}})\le\dim_H({\mathcal{E}}_S)\le s_\ast$  
  \item $\Lambda_S\le 2s_\ast$ (hence $\tfrac12\Lambda_S\le s_\ast\le s_\sharp$).
  \item If $S$ is uniformly $K$--balanced (i.e.\ $\max_i a_i\le K\min_i a_i$ for all $\mathbf a\in S$),
  then $s_\ast=\tfrac12\Lambda_S$.
  \end{enumerate}
  \end{theorem}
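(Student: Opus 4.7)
The plan is to handle the four items in sequence: items (1)--(2) share a common cylinder--covering scheme built on submultiplicativity of the singular--value potential, while (3)--(4) follow from rearrangement comparisons between $\phi^s(\mathbf a)$ and $\prod_j a_j^{-\sigma_j}$.

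For (1), fix $s>s_\sharp$ and cover $\widetilde{\mathcal E}_S$ at depth $n$ by $\{I_\omega:\omega\in S^n\}$. By Lemma~\ref{lem:cylinder_structure} each $I_\omega=\prod_{j=1}^d I^{(j)}_\omega$ is an axis--parallel box whose side lengths satisfy $\ell_j(\omega)\asymp\prod_{i=1}^n(a_i^{(j)})^{-2}$. Sorting them as $\ell_{[1]}\ge\cdots\ge\ell_{[d]}$, cover $I_\omega$ by $\prod_{j\le k}\lceil\ell_{[j]}/\ell_{[k+1]}\rceil$ axis--parallel cubes of side $\ell_{[k+1]}$ (where $k<s\le k+1$); the $s$--content contribution is bounded by a constant times
\[
\ell_{[1]}\cdots\ell_{[k]}\,\ell_{[k+1]}^{\,s-k}=:\varphi^s(I_\omega).
\]
The singular--value function $\varphi^s$ is submultiplicative under diagonal matrix products (Horn's inequalities for $\prod_{j\le k}$ combined with a H\"older interpolation between levels $k$ and $k+1$), giving $\varphi^s(I_\omega)\le\prod_{i=1}^n\phi^s(\mathbf a_i)$. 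Summing over $\omega\in S^n$ yields total $s$--content at most $C_d\,\Phi_S(s)^n\to 0$, with mesh vanishing by Remark~\ref{rmk:uniform-cylinder-decay-d}. Hence $\dim_H(\widetilde{\mathcal E}_S)\le s$, and letting $s\downarrow s_\sharp$ gives (1).

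For (2), the range $s\in(s_\ast,s_\sharp]$ is recovered via escape--truncation. Put $S_M:=\{\mathbf a\in S:\min_j a_j\ge M\}$; tail convergence of the finite series $\Phi_S(s)$ forces $\Phi_{S_M}(s)\to 0$, so $\Phi_{S_M}(s)<1$ for $M$ large, and (1) applied to $S_M$ yields $\dim_H(\widetilde{\mathcal E}_{S_M})\le s$. Because every $\mathbf x\in\mathcal E_S$ has $\min_j a_n(x_j)\to\infty$, its orbit is eventually contained in $S_M$, so
\[
\mathcal E_S\subset\bigcup_{M,N\ge1}\ \bigcup_{\omega_0\in S^N}\bigl\{\mathbf x\in I_{\omega_0}:\mathbf a_n(\mathbf x)\in S_M\ \forall n>N\bigr\}.
\]
Each piece is the image of a subset of $\widetilde{\mathcal E}_{S_M}$ under the inverse of a smooth branch of $G^N$ on the cylinder $I_{\omega_0}$, hence bi--Lipschitz onto its image; countable stability of Hausdorff dimension then gives $\dim_H(\mathcal E_S)\le s$, and letting $s\downarrow s_\ast$ completes (2). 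For (3), compare $\phi^s$ with the uniform profile $\sigma_j=2s/d$. Ordering $a_{(1)}\le\cdots\le a_{(d)}$, $\phi^s$ places the large exponents $(2,\ldots,2,2(s-k),0,\ldots,0)$ on the smallest coordinates, while the uniform profile assigns $(2s/d,\ldots,2s/d)$; both total $2s$. The rearrangement inequality applied to $\log a_{(j)}$ (with the negative sign) yields
\[
\phi^s(\mathbf a)=\prod_{j=1}^k a_{(j)}^{-2}\,a_{(k+1)}^{-2(s-k)}\ \ge\ \prod_{j=1}^d a_j^{-2s/d},
\]
so $\Phi_S(s)<\infty$ forces $\zeta_S(2s/d,\ldots,2s/d)<\infty$ and hence $\Lambda_S\le 2s$; letting $s\downarrow s_\ast$ gives $\Lambda_S\le 2s_\ast$. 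For (4), $K$--balance forces $a_j\asymp a_{(1)}$ uniformly on $S$ (with constants depending only on $K$), so $\phi^s(\mathbf a)\asymp a_{(1)}^{-2s}$ and $\prod_j a_j^{-\sigma_j}\asymp a_{(1)}^{-(\sigma_1+\cdots+\sigma_d)}$. Both series therefore reduce to $\sum_{\mathbf a\in S}a_{(1)}^{-T}$, with $T=2s$ in the first case and $T=\sigma_1+\cdots+\sigma_d$ in the second, and these critical thresholds coincide, forcing $s_\ast=\tfrac12\Lambda_S$.

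The main obstacle I anticipate is the covering step in (1): one must carefully round the counts $\lceil\ell_{[j]}/\ell_{[k+1]}\rceil$ (harmless when $\ell_{[j]}\ge\ell_{[k+1]}$, which holds by construction) and establish the clean multiplicative bound $\varphi^s(I_\omega)\le\prod_i\phi^s(\mathbf a_i)$, which even in the diagonal case requires the full Horn--interpolation argument rather than a naive factorwise inequality. The remaining items (3)--(4) are essentially one--line consequences of rearrangement and the collapse of the potentials under $K$--balance.
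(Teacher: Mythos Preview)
Your proposal is correct and follows essentially the same architecture as the paper: the cylinder covering with the singular--value function $\varphi^s$ and its submultiplicativity for (1), the tail truncation $S_M=\{\mathbf a\in S:\min_j a_j\ge M\}$ combined with countable stability for (2), and rearrangement/collapse arguments for (3)--(4). One small variation worth noting in (3): you witness $\Lambda_S\le 2s$ with the diagonal profile $(2s/d,\ldots,2s/d)$ via a majorization (Abel summation) comparison, whereas the paper uses the extremal profile $(2,\ldots,2,2(s-k),0,\ldots,0)$ together with the rearrangement inequality; your choice has the minor advantage of landing strictly inside $(0,\infty)^d$. You also rightly flag the submultiplicativity $\varphi^s(I_\omega)\le\prod_i\phi^s(\mathbf a_i)$ as the one genuinely nontrivial step in (1); the paper asserts this inequality with less detail than your Horn--interpolation outline.
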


\begin{corollary}[Growth bound in the balanced region]\label{cor:growth-balanced}
Let $K\ge 1$ and let $S\subset \mathcal C_K\subset\N^d$ be infinite. 
Assume that there exists $\alpha\ge1$ such that
\[
  |S\cap Q_N|\ \ll\ N^{d/\alpha}
  \qquad (N\to\infty),
  \qquad Q_N:=[1,N]^d\cap\N^d.
\]
Then
\[
  \dim_H(\mathcal E_S^{\mathrm{vec}})\le \dim_H(\mathcal{E}_S)\ \le\ \frac{d}{2\alpha}.
\]
\end{corollary}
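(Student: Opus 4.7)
The plan is to combine Theorem~\ref{thm:zeta-vs-singular} with a direct estimate of the Dirichlet series $\zeta_S$ along the diagonal $\boldsymbol{\sigma}=(\sigma,\dots,\sigma)$, using the uniform balance to convert the box-counting bound on $|S\cap Q_N|$ into control on $\prod_j a_j$. Since $S\subset\mathcal C_K$, part~(4) of Theorem~\ref{thm:zeta-vs-singular} gives $s_\ast=\tfrac12\Lambda_S$, and combining with part~(2) yields
\[
\dim_H(\mathcal E_S^{\mathrm{vec}})\le\dim_H(\mathcal E_S)\le s_\ast=\tfrac12\Lambda_S.
\]
It therefore suffices to show $\Lambda_S\le d/\alpha$, i.e.\ that $(\sigma,\dots,\sigma)\in\mathcal D_S$ for every $\sigma>1/\alpha$.

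For the diagonal estimate I would exploit the balance to reduce to a one-scale sum. Since $\min_j a_j\ge\|\mathbf a\|_\infty/K$ for every $\mathbf a\in S$, we have $a_j\ge\|\mathbf a\|_\infty/K$ for each $j$, hence $\prod_{j=1}^d a_j\ge K^{-d}\|\mathbf a\|_\infty^d$, so
\[
\zeta_S(\sigma,\dots,\sigma)
=\sum_{\mathbf a\in S}\Bigl(\prod_{j=1}^d a_j\Bigr)^{-\sigma}
\le K^{d\sigma}\sum_{\mathbf a\in S}\|\mathbf a\|_\infty^{-d\sigma}.
\]
Writing $T_N:=|S\cap Q_N|=\#\{\mathbf a\in S:\|\mathbf a\|_\infty\le N\}$ and applying Abel summation, the hypothesis $T_N\ll N^{d/\alpha}$ gives
\[
\sum_{\mathbf a\in S}\|\mathbf a\|_\infty^{-d\sigma}
=\sum_{M\ge1}(T_M-T_{M-1})M^{-d\sigma}
\ll\sum_{M\ge1}T_M\bigl(M^{-d\sigma}-(M+1)^{-d\sigma}\bigr)
\ll\sum_{M\ge1}M^{d/\alpha-d\sigma-1},
\]
which converges whenever $\sigma>1/\alpha$ (the boundary term $T_N N^{-d\sigma}\ll N^{d/\alpha-d\sigma}\to 0$ then vanishes). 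Thus $(\sigma,\dots,\sigma)\in\mathcal D_S$ for every such $\sigma$; letting $\sigma\downarrow 1/\alpha$ yields $\Lambda_S\le d/\alpha$, and together with the first step $\dim_H(\mathcal E_S)\le d/(2\alpha)$.

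The argument is essentially routine once the two ingredients are identified, and I do not anticipate a serious obstacle. The only conceptual point worth flagging is that the counting hypothesis controls cubes $Q_N=[1,N]^d\cap\N^d$ rather than weighted products, whereas $\zeta_S$ sums $(\prod_j a_j)^{-\sigma}$; the uniform balance $S\subset\mathcal C_K$ is precisely what bridges the two, converting cube-count information into the pointwise bound $\prod_j a_j\gg \|\mathbf a\|_\infty^d$ needed to feed into the diagonal Dirichlet estimate. Without this balance, the conclusion can fail, consistent with the Bedford--McMullen-type obstruction discussed in the remarks following Theorem~\ref{thm:Zdd-relative}.
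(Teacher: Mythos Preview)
Your proof is correct and follows essentially the same route as the paper: both arguments invoke Theorem~\ref{thm:zeta-vs-singular}(2) and~(4) to reduce to showing $(\sigma,\dots,\sigma)\in\mathcal D_S$ for every $\sigma>1/\alpha$, and both use the balance condition $S\subset\mathcal C_K$ to bound $\prod_j a_j^{-\sigma}$ by a power of $\|\mathbf a\|_\infty$ before feeding in the growth hypothesis $|S\cap Q_N|\ll N^{d/\alpha}$. The only cosmetic difference is that the paper decomposes $S$ into dyadic shells $Q_{2^{n+1}}\setminus Q_{2^n}$ while you use Abel summation; these are interchangeable standard devices for converting a counting bound into convergence of a Dirichlet-type series.
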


\begin{proof}[{Proof of Corollary~\ref{cor:growth-balanced} assuming Theorem~\ref{thm:zeta-vs-singular}:}]

Fix $\varepsilon>0$ and set $\sigma:=\frac1\alpha+\varepsilon$ and
$\boldsymbol{\sigma}:=(\sigma,\dots,\sigma)$.
We show that $\boldsymbol{\sigma}\in\mathcal D_S$, i.e.\ $\zeta_S(\boldsymbol{\sigma})<\infty$.

Decompose $S$ into dyadic shells $Q_{2^{n+1}}\setminus Q_{2^n}$. Since
$|S\cap Q_{2^{n+1}}|\ll 2^{(n+1)d/\alpha}$ and $S\subset \mathcal C_K$, for every
$\mathbf a\in S\cap(Q_{2^{n+1}}\setminus Q_{2^n})$ we have $\max_j a_j>2^n$ and hence
$\min_j a_j\ge K^{-1}\max_j a_j>2^n/K$, so
\[
  \prod_{j=1}^d a_j^{-\sigma}\le \Bigl(\frac{2^n}{K}\Bigr)^{-d\sigma}.
\]
Therefore,
\[
  \zeta_S(\boldsymbol{\sigma})
  \le
  \sum_{n\ge0} |S\cap Q_{2^{n+1}}|\cdot \Bigl(\frac{2^n}{K}\Bigr)^{-d\sigma}
  \ll_{d,\alpha,K,\sigma}
  \sum_{n\ge0} 2^{nd(1/\alpha-\sigma)}<\infty,
\]
since $\sigma>1/\alpha$. Hence $\boldsymbol{\sigma}\in\mathcal D_S$ and
\[
  \Lambda_S\le \sigma_1+\cdots+\sigma_d = d\sigma = \frac{d}{\alpha}+d\varepsilon.
\]
Because $S$ is uniformly $K$--balanced, Theorem~\ref{thm:zeta-vs-singular}(4) gives
$s_\ast=\tfrac12\Lambda_S$, so
\[
  s_\ast \le \frac{d}{2\alpha}+\frac{d\varepsilon}{2}.
\]
Finally, by Theorem~\ref{thm:zeta-vs-singular}(2), we have
\[
  \dim_H(\mathcal E_S^{\mathrm{vec}})
  \le
  \dim_H(\mathcal E_S)
  \le
  s_\ast
  \le
  \frac{d}{2\alpha}+\frac{d\varepsilon}{2}.
\]
Letting $\varepsilon\downarrow 0$ yields $\dim_H(\mathcal E_S^{\mathrm{vec}})
  \le \dim_H(\mathcal E_S)\le \frac{d}{2\alpha}$.
\end{proof}

\subsection{Universal upper bounds via singular values and Dirichlet series}
\label{subsec:finite-modifications}

In this subsection we develop a collection of general tools for obtaining
upper bounds on the Hausdorff dimension of digit--restricted sets.
We show that finite modifications of digit constraints do not affect dimension,
establish a universal upper bound in terms of a singular--value type potential,
and relate this geometric quantity to a multivariate Dirichlet series
invariant.

\begin{lemma}[Digit insertion is bi-Lipschitz]
\label{lem:digit-insertion}
Let $X\subset(0,1)^d$ and fix digit vectors $\mathbf{a}_1,\dots,\mathbf{a}_N\in\N^d$.
Define $\Phi:X\to(0,1)^d$ by inserting $\mathbf{a}_1,\dots,\mathbf{a}_N$ as the first
$N$ digits in each coordinate, i.e.\ for $\mathbf{y}=(y_1,\dots,y_d)\in X$,
\[
x_j
=
\frac{1}{
a_{1,j}
+\displaystyle\frac{1}{a_{2,j}
+\cdots
+\displaystyle\frac{1}{a_{N,j}+y_j}}
},
\qquad 1\le j\le d.
\]
Let $X_{\mathbf{a}_1,\dots,\mathbf{a}_N}:=\Phi(X)$. Then
\[
\dim_H(X_{\mathbf{a}_1,\dots,\mathbf{a}_N})=\dim_H(X).
\]
\end{lemma}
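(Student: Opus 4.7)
My plan is to prove the stronger assertion that $\Phi$ is in fact bi-Lipschitz on $X$ (indeed on all of $[0,1]^d$), and then invoke the classical invariance of Hausdorff dimension under bi-Lipschitz maps. This immediately yields the stated dimension equality, since bi-Lipschitz maps have bi-Lipschitz inverses on their image, so both $\dim_H(X_{\mathbf a_1,\dots,\mathbf a_N})\le \dim_H(X)$ and the reverse inequality follow.

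The first step is to observe that $\Phi$ factors coordinatewise as a product map
\[
\Phi(\mathbf y) = \bigl(\varphi_1(y_1),\dots,\varphi_d(y_d)\bigr),
\]
where $\varphi_j:[0,1]\to[0,1]$ inserts the one-dimensional prefix $(a_{1,j},\dots,a_{N,j})$ at the beginning of the continued-fraction expansion of $y_j$. It therefore suffices to show that each $\varphi_j$ is bi-Lipschitz on $[0,1]$, since a product of bi-Lipschitz maps is bi-Lipschitz in the $\ell^\infty$ (hence Euclidean) metric.

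The second step is the explicit computation. Let $p_{k,j}/q_{k,j}$ denote the convergents of $(a_{1,j},\dots,a_{N,j})$ for $k\in\{N-1,N\}$, as recalled in Subsection~\ref{subsec:cf-facts}. A standard identity for complete quotients gives
\[
\varphi_j(y_j) \;=\; \frac{p_{N,j} + p_{N-1,j}\,y_j}{q_{N,j} + q_{N-1,j}\,y_j},
\]
so $\varphi_j$ is a Möbius transformation on $[0,1]$. Using the determinant identity $p_{N-1,j}\,q_{N,j} - p_{N,j}\,q_{N-1,j} = (-1)^N$, differentiation yields
\[
|\varphi_j'(y_j)| \;=\; \frac{1}{(q_{N,j} + q_{N-1,j}\,y_j)^2}.
\]
For $y_j\in[0,1]$ this is squeezed between the positive constants $(q_{N,j}+q_{N-1,j})^{-2}$ and $q_{N,j}^{-2}$, which depend only on the fixed prefix. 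By the mean value theorem, $\varphi_j$ is bi-Lipschitz on $[0,1]$ with these explicit constants.

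Combining the factorization with the one-dimensional bi-Lipschitz bounds gives that $\Phi$ is bi-Lipschitz with constants depending only on $(\mathbf a_1,\dots,\mathbf a_N)$, completing the proof. The whole argument is routine; the only step requiring minor care is identifying $\varphi_j$ with the correct Möbius representative via the convergent recursion and keeping track of which convergents enter in the numerator and denominator. There is no genuine obstacle here, and in particular no restriction on $X$ or on escape/injectivity of digits is needed for this lemma.
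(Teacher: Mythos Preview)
Your proof is correct and follows essentially the same approach as the paper: factor $\Phi$ coordinatewise, recognize each coordinate map as a M\"obius transformation via the convergent recursion, bound its derivative above and below on $[0,1]$ by constants depending only on the prefix, and conclude bi-Lipschitz invariance of Hausdorff dimension. Your version is slightly more explicit (you write out the determinant identity and the exact derivative bounds), but the argument is the same.
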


\begin{proof}
For each coordinate, the map $y_j\mapsto x_j$ is a Möbius transformation
\[
x_j=\frac{P_{N,j}+y_j P_{N-1,j}}{Q_{N,j}+y_j Q_{N-1,j}},
\]
with convergents determined by $a_{1,j},\dots,a_{N,j}$. On $y_j\in(0,1)$ the derivative
is bounded above and below by positive constants depending only on the prescribed digits.
Hence each coordinate map is bi-Lipschitz on $(0,1)$ with uniform constants, and the
product map $\Phi$ is bi-Lipschitz on $(0,1)^d$. Hausdorff dimension is invariant under
bi-Lipschitz maps.
\end{proof}

\begin{lemma}[Deleting finitely many digits does not change $\dim_H$]
\label{lem:delete-finite}
Fix $S\subset\N^d$ and $N\in\N$. Then
\[
\dim_H(\mathcal{E}_S)
=
\dim_H\Bigl\{
\mathbf{x}:\ \mathbf{a}_n(\mathbf{x})\in S\ (n\ge N),\ a_n(x_j)\to\infty\ (1\le j\le d)
\Bigr\}.
\]
The same statement holds with $\mathcal{E}_S$ replaced by $\widetilde{\mathcal{E}}_S$.
\end{lemma}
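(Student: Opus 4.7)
The plan is to prove both inequalities separately, using the bi-Lipschitz digit insertion from Lemma~\ref{lem:digit-insertion} together with the countable stability of Hausdorff dimension. Write $\mathcal E_S^{(N)}$ for the set on the right-hand side, that is, those $\mathbf x\in(0,1)^d$ with $\mathbf a_n(\mathbf x)\in S$ for all $n\ge N$ and $a_n(x_j)\to\infty$ for each $1\le j\le d$. The inclusion $\mathcal E_S\subset \mathcal E_S^{(N)}$ is immediate, and monotonicity of $\dim_H$ gives one direction $\dim_H(\mathcal E_S)\le \dim_H(\mathcal E_S^{(N)})$.

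For the reverse inequality I would decompose $\mathcal E_S^{(N)}$ according to its initial prefix. For each $\omega=(\mathbf a_1,\dots,\mathbf a_{N-1})\in(\N^d)^{N-1}$, let $\Phi_\omega$ be the digit--insertion map of Lemma~\ref{lem:digit-insertion} that prepends $\omega$ to the expansion of a given point. Every $\mathbf x\in\mathcal E_S^{(N)}$ arises uniquely as $\Phi_\omega(\mathbf y)$ for the prefix $\omega$ read off from $\mathbf x$ and the shift $\mathbf y$ obtained by removing those $N-1$ initial digit vectors. Since $\mathbf a_n(\mathbf y)=\mathbf a_{n+N-1}(\mathbf x)\in S$ for all $n\ge 1$, and $a_n(y_j)\to\infty$ as well, one has $\mathbf y\in\mathcal E_S$. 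This yields the countable decomposition
\[
  \mathcal E_S^{(N)} = \bigcup_{\omega\in(\N^d)^{N-1}} \Phi_\omega(\mathcal E_S).
\]

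To finish, note that prepending finitely many bounded digits preserves both the tail restriction $\mathbf a_n(\cdot)\in S$ (for $n\ge N$) and the coordinatewise escape $a_n(x_j)\to\infty$, so $\Phi_\omega$ restricts to a bijection $\mathcal E_S\to \Phi_\omega(\mathcal E_S)$. By Lemma~\ref{lem:digit-insertion} it is bi-Lipschitz (with constants depending on $\omega$), hence $\dim_H\Phi_\omega(\mathcal E_S)=\dim_H(\mathcal E_S)$. Countable stability of Hausdorff dimension then gives
\[
  \dim_H(\mathcal E_S^{(N)})
  = \sup_{\omega\in(\N^d)^{N-1}}\dim_H\bigl(\Phi_\omega(\mathcal E_S)\bigr)
  = \dim_H(\mathcal E_S),
\]
matching the first inclusion. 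The statement for $\widetilde{\mathcal E}_S$ follows from the same decomposition after simply deleting the escape clause throughout, since $\Phi_\omega$ remains bi-Lipschitz and preserves the tail restriction $\mathbf a_n(\cdot)\in S$. I do not expect any serious obstacle: the only point requiring care is verifying that prepending an arbitrary fixed prefix preserves both tail conditions, which is automatic because only finitely many initial digits are altered.
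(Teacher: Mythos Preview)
Your proof is correct and follows essentially the same approach as the paper: decompose the larger set as a countable union over all possible initial prefixes $\omega\in(\N^d)^{N-1}$, observe that each branch is bi-Lipschitz equivalent to $\mathcal E_S$ via Lemma~\ref{lem:digit-insertion}, and conclude by countable stability of Hausdorff dimension. The paper's proof is a terse one-liner to this effect; you have simply spelled out the details.
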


\begin{proof}
Apply Lemma~\ref{lem:digit-insertion} to insert/delete the first $N-1$ digit vectors.
The relevant set is a countable union over all choices of the first $N-1$ digits, and each branch
is bi-Lipschitz equivalent to the tail set.
\end{proof}

For $\mathbf{a}=(a_1,\dots,a_d)\in\N^d$ let $a_{(1)}\le\cdots\le a_{(d)}$ be the nondecreasing
rearrangement of its coordinates. For $s\in(0,d]$ let $k$ be the unique integer with $k<s\le k+1$ and define
\begin{equation}\label{eq:def-phis}
\phi^s(\mathbf a)
:=
\Bigl(\prod_{j=1}^k a_{(j)}^{-2}\Bigr)\cdot a_{(k+1)}^{-2(s-k)}.
\end{equation}

For a family $(S_n)_{n\ge1}$ with $S_n\subset\N^d$ define the corresponding sets
\[
\mathcal{E}_{(S_n)}
:=
\Bigl\{
\mathbf{x}\in(0,1)^d:
\mathbf{a}_n(\mathbf{x})\in S_n\ (n\ge1),\ \ a_n(x_j)\to\infty\ (1\le j\le d)
\Bigr\},
\]
and similarly $\widetilde{\mathcal{E}}_{(S_n)}$ without the divergence requirement.

\begin{proposition}[Universal upper bound]
\label{prop:upper-bound-universal}
Let $(S_n)_{n\ge1}$ be subsets of $\N^d$ and fix $s\in(0,d]$. Set
\[
\Phi_{S_n}(s):=\sum_{\mathbf a\in S_n}\phi^s(\mathbf a).
\]
If
\[
\limsup_{n\to\infty}\Bigl(\prod_{k=1}^n\Phi_{S_k}(s)\Bigr)^{1/n}<1,
\]
then $\dim_H(\widetilde{\mathcal{E}}_{(S_n)})\le s$, and hence also
$\dim_H(\mathcal{E}_{(S_n)})\le s$.
In particular, for fixed $S$ one has $\dim_H(\widetilde{\mathcal{E}}_S)\le s$ whenever $\Phi_S(s)<1$.
\end{proposition}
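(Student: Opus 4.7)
The plan is to cover $\widetilde{\mathcal E}_{(S_n)}$ by cubes extracted from level-$n$ cylinders and to bound the resulting $s$-dimensional Hausdorff cost by $\prod_{k=1}^n \Phi_{S_k}(s)$, which decays geometrically under the hypothesis. First I would fix $n\ge 1$ and range over admissible words $\omega=(\mathbf a_1,\ldots,\mathbf a_n)\in S_1\times\cdots\times S_n$; by Lemma~\ref{lem:cylinder_structure} each $I_\omega$ is the product $\prod_{j=1}^d I_\omega^{(j)}$ with sides $\ell_j(\omega):=|I_\omega^{(j)}|\le\prod_{i=1}^n (a_i^{(j)})^{-2}$ (Lemma~\ref{lem:1d_geometry}(i)), and every point of $\widetilde{\mathcal E}_{(S_n)}$ lies in one such $I_\omega$. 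Letting $k$ be the unique integer with $k<s\le k+1$, and writing $\ell_{(1)}(\omega)\ge\cdots\ge\ell_{(d)}(\omega)$ for the sides in decreasing order, I would cover $I_\omega$ by axis-aligned cubes of side $L_\omega:=\ell_{(k+1)}(\omega)$. This uses at most $C_d\prod_{j=1}^k \ell_{(j)}(\omega)/L_\omega$ cubes of diameter $\le\sqrt d\,L_\omega$, contributing at most $C_d'\prod_{j=1}^k\ell_{(j)}(\omega)\cdot L_\omega^{s-k}$ to $\mathcal H^s_\delta$; and by Remark~\ref{rmk:uniform-cylinder-decay-d} one has $L_\omega\le C\vartheta^n$ uniformly in $\omega$, so these are $\delta$-covers with $\delta\to 0$ as $n\to\infty$.

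The technical heart of the argument will be the singular-value submultiplicative bound
\[
\prod_{j=1}^k \ell_{(j)}(\omega)\cdot L_\omega^{s-k}\ \le\ \prod_{i=1}^n \phi^s(\mathbf a_i).
\]
To prove it, I would let $\sigma\in\mathfrak S_d$ be any permutation realizing $\ell_{\sigma(1)}(\omega)\ge\cdots\ge\ell_{\sigma(d)}(\omega)$. Since $\ell_j(\omega)=\prod_{i=1}^n (a_i^{(j)})^{-2}$, the left-hand side factors as
\[
\prod_{i=1}^n \Bigl(\prod_{j=1}^k (a_i^{(\sigma(j))})^{-2}\cdot (a_i^{(\sigma(k+1))})^{-2(s-k)}\Bigr).
\]
For each fixed $i$, the rearrangement inequality applied to $\{\log a_i^{(j)}\}_{j=1}^d$ with the nonnegative nonincreasing weights $(1,\ldots,1,s-k,0,\ldots,0)$ shows that the bracket is maximized when $\sigma(1),\ldots,\sigma(k+1)$ index the $k+1$ smallest entries of $\mathbf a_i$, and that maximum is exactly $\phi^s(\mathbf a_i)$ by \eqref{eq:def-phis}. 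Hence each bracket is bounded by $\phi^s(\mathbf a_i)$, and the claim follows by multiplying over $i$.

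Summing the per-cylinder $s$-costs, the product structure factors the sum and yields
\[
\mathcal H^s_\delta(\widetilde{\mathcal E}_{(S_n)})\ \le\ C_d'\sum_\omega\prod_{i=1}^n \phi^s(\mathbf a_i)\ =\ C_d'\prod_{i=1}^n \Phi_{S_i}(s).
\]
Choosing any $\rho$ strictly between $\limsup_n(\prod_{k=1}^n\Phi_{S_k}(s))^{1/n}$ and $1$, one obtains $\prod_{i=1}^n\Phi_{S_i}(s)\le\rho^n$ for all large $n$, so letting $n\to\infty$ gives $\mathcal H^s(\widetilde{\mathcal E}_{(S_n)})=0$ and hence $\dim_H(\widetilde{\mathcal E}_{(S_n)})\le s$. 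The remaining assertions follow from $\mathcal E_{(S_n)}\subset\widetilde{\mathcal E}_{(S_n)}$ and from specializing $S_n\equiv S$, in which case $(\Phi_S(s)^n)^{1/n}=\Phi_S(s)<1$.

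The main obstacle will be the submultiplicativity step: the permutation $\sigma$ is dictated by the \emph{cumulative} sides $\ell_j(\omega)$ and generally will not coincide with the per-level optimal orderings on any individual $\mathbf a_i$, so one must verify that a single fixed $\sigma$ still yields a valid level-by-level bound by $\phi^s(\mathbf a_i)$. Once this rearrangement step is settled, the one-scale covering geometry, the uniform exponential decay from Remark~\ref{rmk:uniform-cylinder-decay-d}, and the finite-product factorization $\sum_\omega\prod_i(\cdot)=\prod_i\sum_{\mathbf a_i}(\cdot)$ are routine bookkeeping.
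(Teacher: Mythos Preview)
Your proposal is correct and follows essentially the same route as the paper: cover $\widetilde{\mathcal E}_{(S_n)}$ by level-$n$ cylinders, bound the $s$-dimensional cost of each rectangle by its singular-value function, prove the submultiplicative estimate $\varphi^s(I_\omega)\le C\prod_i\phi^s(\mathbf a_i)$ via rearrangement, and then factor the sum. Your rearrangement argument in fact supplies the detail the paper leaves implicit in \eqref{eq:varphi-phi}; just note that $\ell_j(\omega)\le\prod_i(a_i^{(j)})^{-2}$ rather than equality, and invoke monotonicity of the singular-value function (equivalently, your $\max_\sigma$ representation) to pass from the true sides to these upper bounds before factoring.
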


\begin{proof}
Fix $s\in(0,d]$ and let $k\in\{0,1,\dots,d-1\}$ be the unique integer such that
$k<s\le k+1$.
Assume that
\[
\limsup_{n\to\infty}\Bigl(\prod_{j=1}^n\Phi_{S_j}(s)\Bigr)^{1/n}<1.
\]
Then there exist $\rho\in(0,1)$ and $n_0\in\N$ such that
\begin{equation}\label{eq:Phi-decay}
\prod_{j=1}^n\Phi_{S_j}(s)\le \rho^n
\qquad\text{for all }n\ge n_0.
\end{equation}

For each $n\ge1$ we have the cylinder cover
\begin{equation}\label{eq:cylinder-cover}
\widetilde{\mathcal E}_{(S_n)}
\subset
\bigcup_{\omega_n\in S_1\times\cdots\times S_n} I_{\omega_n},
\end{equation}
where $\omega_n=(\mathbf a_1,\dots,\mathbf a_n)$ and
\[
I_{\omega_n}=\prod_{j=1}^d I(a_{1,j},\dots,a_{n,j})
\]
is the product cylinder.  Write $L_j(\omega_n)$ for the side length of
$I_{\omega_n}$ in coordinate $j$.
By Lemma~\ref{lem:cylinder_structure} and Lemma~\ref{lem:1d_geometry}(i), there
exist constants $c_0,C_0>0$ such that for every $1\le j\le d$,
\begin{equation}\label{eq:side-length}
c_0\prod_{i=1}^n (a_{i,j}+1)^{-2}
\ \le\
L_j(\omega_n)
\ \le\
C_0\prod_{i=1}^n a_{i,j}^{-2}.
\end{equation}
Let $\mathcal L_1(\omega_n)\ge\cdots\ge\mathcal L_d(\omega_n)$ be the
nonincreasing rearrangement of the side lengths $\{L_j(\omega_n)\}_{j=1}^d$ and
set
\begin{equation}\label{eq:varphi-cylinder}
\varphi^s(I_{\omega_n})
:=
\mathcal L_1(\omega_n)\cdots \mathcal L_k(\omega_n)\,
\mathcal L_{k+1}(\omega_n)^{\,s-k}.
\end{equation}

Recall that the $s$--dimensional Hausdorff content of a set $E$ is
\[
\mathcal H^s_\infty(E)
=
\inf\Bigl\{
\sum_{m} \diam(U_m)^s:\ E\subset\bigcup_m U_m
\Bigr\}.
\]
A standard covering argument for axis-parallel rectangles yields a constant
$C_1=C_1(d,s)>0$ such that, for every $\omega_n$,
\begin{equation}\label{eq:content-rect}
\mathcal H^s_\infty(I_{\omega_n})
\le
C_1\,\varphi^s(I_{\omega_n}).
\end{equation}

Using \eqref{eq:side-length} and the definition \eqref{eq:def-phis}, there is a
constant $C_2=C_2(d,s)>0$ such that for every word
$\omega_n=(\mathbf a_1,\dots,\mathbf a_n)$,
\begin{equation}\label{eq:varphi-phi}
\varphi^s(I_{\omega_n})
\le
C_2\prod_{i=1}^n \phi^s(\mathbf a_i).
\end{equation}
Define
\[
\mathcal W_n(s)
:=
\sum_{\omega_n\in S_1\times\cdots\times S_n}\varphi^s(I_{\omega_n}).
\]
Then \eqref{eq:varphi-phi} gives
\[
\mathcal W_n(s)
\le
C_2\sum_{\omega_n\in S_1\times\cdots\times S_n}\ \prod_{i=1}^n \phi^s(\mathbf a_i)
=
C_2\prod_{i=1}^n\Bigl(\sum_{\mathbf a\in S_i}\phi^s(\mathbf a)\Bigr)
=
C_2\prod_{i=1}^n \Phi_{S_i}(s).
\]
Combining this with \eqref{eq:cylinder-cover} and \eqref{eq:content-rect}, we
obtain for all $n\ge1$,
\[
\mathcal H^s_\infty\bigl(\widetilde{\mathcal E}_{(S_n)}\bigr)
\le
\sum_{\omega_n\in S_1\times\cdots\times S_n}\mathcal H^s_\infty(I_{\omega_n})
\le
C_1\,\mathcal W_n(s)
\le
C_1C_2\prod_{i=1}^n \Phi_{S_i}(s).
\]
By \eqref{eq:Phi-decay}, the right-hand side tends to $0$ as $n\to\infty$.
Hence $\mathcal H^s_\infty(\widetilde{\mathcal E}_{(S_n)})=0$ and therefore
$\dim_H(\widetilde{\mathcal E}_{(S_n)})\le s$.
Since $\mathcal E_{(S_n)}\subset\widetilde{\mathcal E}_{(S_n)}$, the same upper
bound holds for $\mathcal E_{(S_n)}$.

In the stationary case $S_n\equiv S$, the hypothesis reduces to $\Phi_S(s)<1$,
and the conclusion follows immediately.
\end{proof}

Define
\[
\zeta_S(\boldsymbol{\sigma})
:=
\sum_{\mathbf a\in S}\prod_{j=1}^d a_j^{-\sigma_j},
\qquad
\mathcal D_S
:=
\{\boldsymbol{\sigma}\in(0,\infty)^d:\ \zeta_S(\boldsymbol{\sigma})<\infty\},
\]
and the trace infimum
\[
\Lambda_S
:=
\inf_{\boldsymbol{\sigma}\in\mathcal D_S}(\sigma_1+\cdots+\sigma_d).
\]
We will need a small combinatorial device to compare weighted sums after sorting
coordinates; for completeness we record the following rearrangement inequality.

\begin{lemma}[Rearrangement inequality]\label{lem:rearrangement}
Let $x_1\le\cdots\le x_d$ and $w_1\ge\cdots\ge w_d$ be real numbers. Then for every
permutation $\pi\in S_d$,
\begin{equation}\label{eq:rearrangement}
\sum_{i=1}^d w_{\pi(i)}x_i \ \ge\ \sum_{i=1}^d w_i x_i .
\end{equation}
\end{lemma}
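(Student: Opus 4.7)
The plan is to deduce this from the classical rearrangement inequality via a direct swap (bubble-sort) argument. The guiding intuition is that, with $(x_i)$ nondecreasing and $(w_i)$ nonincreasing, the identity permutation pairs the largest $w_i$ with the smallest $x_i$ and vice versa — the maximally anti-aligned pairing — so it should yield the \emph{minimum} value of $\sum_k w_{\pi(k)} x_k$. Any other $\pi$ differs from the identity by a sequence of swaps, each of which moves the pairing toward a more aligned configuration and hence can only increase the sum.

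Concretely, I would argue as follows. Suppose $\pi\neq\mathrm{id}$ and choose an inversion $(i,j)$ of $\pi$, i.e., $i<j$ with $\pi(i)>\pi(j)$; let $\pi'$ be obtained from $\pi$ by swapping the values at positions $i$ and $j$ and leaving all other positions unchanged. A one-line cancellation gives
\[
\sum_{k=1}^d w_{\pi(k)} x_k \;-\; \sum_{k=1}^d w_{\pi'(k)} x_k
\;=\; (w_{\pi(i)}-w_{\pi(j)})(x_i-x_j).
\]
The first factor is $\le 0$ since $\pi(i)>\pi(j)$ and $(w_i)$ is nonincreasing; the second is $\le 0$ since $i<j$ and $(x_i)$ is nondecreasing. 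Hence the product is $\ge 0$, which yields $\sum_k w_{\pi'(k)} x_k \le \sum_k w_{\pi(k)} x_k$.

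Finally, I would restrict to swaps of \emph{adjacent} inversions, so that the argument is literally bubble sort: each such swap decreases the inversion count of $\pi$ by exactly one, and the process terminates at the identity after at most $\binom{d}{2}$ steps. Chaining the one-step inequalities along this path gives $\sum_k w_k x_k \le \sum_k w_{\pi(k)} x_k$, which is \eqref{eq:rearrangement}. There is no real obstacle here: the proof reduces entirely to the atomic swap identity above, which uses nothing beyond the monotonicity hypotheses on $(x_i)$ and $(w_i)$.
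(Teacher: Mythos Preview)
Your proof is correct and follows essentially the same swap argument as the paper: both compute the effect of exchanging the values of $\pi$ at two positions forming an inversion, observe that the resulting difference is a product of two factors of the same sign, and iterate to reach the identity permutation. Your version adds the explicit termination guarantee via adjacent swaps (bubble sort), which the paper leaves implicit.
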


\begin{proof}
If $p<q$ and $w_{\pi(p)}<w_{\pi(q)}$, let $\pi'$ be obtained from $\pi$ by swapping
$\pi(p)$ and $\pi(q)$. Then
\[
\sum_{i=1}^d w_{\pi(i)}x_i-\sum_{i=1}^d w_{\pi'(i)}x_i
=
(w_{\pi(q)}-w_{\pi(p)})(x_q-x_p)\ge0.
\]
Iterating this swap removes inversions and yields \eqref{eq:rearrangement}.
\end{proof}

\begin{proposition}[Zeta trace controls $\Phi_S$]
\label{prop:zeta-vs-singular}
Let $S\subset\N^d$ be infinite and define
\[
s_\ast:=\inf\{s>0:\ \Phi_S(s)<\infty\}.
\]
Then
\[
\Lambda_S\le 2s_\ast .
\]
\end{proposition}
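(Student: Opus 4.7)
The plan is to produce, for every $s>s_\ast$, a symmetric exponent vector $\boldsymbol{\sigma}=(c,\dots,c)\in(0,\infty)^d$ with $c:=2s/d$ that lies in the convergence domain $\mathcal D_S$. Since then $\sigma_1+\cdots+\sigma_d=dc=2s$, this witnesses $\Lambda_S\le 2s$; letting $s\downarrow s_\ast$ yields $\Lambda_S\le 2 s_\ast$.

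The central step I would take is a pointwise majorization
\[
  \prod_{j=1}^d a_j^{-c}\ \le\ \phi^s(\mathbf a)\qquad\text{for every }\mathbf a\in\N^d.
\]
Summing this inequality over $\mathbf a\in S$ immediately gives $\zeta_S(\boldsymbol{\sigma})\le \Phi_S(s)<\infty$, so $\boldsymbol{\sigma}\in\mathcal D_S$. To prove the pointwise bound I would sort the coordinates $a_{(1)}\le\cdots\le a_{(d)}$ and pass to logarithms. Write $\phi^s(\mathbf a)=\prod_{j=1}^d a_{(j)}^{-w_j}$ with $w_1=\cdots=w_k=2$, $w_{k+1}=2(s-k)$, and $w_{k+2}=\cdots=w_d=0$. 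Both the symmetric vector $(c,\dots,c)$ and the vector $(w_j)$ sum to $2s$, so the residual $u_j:=c-w_j$ satisfies $\sum_j u_j=0$, and the desired inequality reduces to
\[
  \sum_{j=1}^d u_j\,\log a_{(j)}\ \ge\ 0.
\]
The key observation will be that $(u_j)_{j=1}^d$ is weakly increasing: the two relevant comparisons $c-2\le c-2(s-k)\le c$ are equivalent to $0\le s-k\le 1$, which holds by the defining choice of $k$. Once this monotonicity is in hand, I would close the estimate with a single-transition Abel argument: pick an index $j_0$ at which $u_j$ switches from nonpositive to nonnegative, use $\sum_j u_j=0$ to rewrite $\sum_j u_j\log a_{(j)} = \sum_j u_j\bigl(\log a_{(j)}-\log a_{(j_0)}\bigr)$, and note that each summand is now a product of two factors of the same sign.

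The only genuine obstacle is the monotonicity check for $(u_j)$ together with the Abel step; both are elementary. Everything else — choosing $\boldsymbol{\sigma}=(c,\dots,c)$, summing the pointwise bound, and taking the infimum over $s>s_\ast$ — is formal. The argument is in the spirit of Lemma~\ref{lem:rearrangement}, being a discrete majorization between two zero-sum weight vectors paired against an increasing sequence, though I would not invoke that lemma verbatim because the two weight vectors here differ pointwise rather than being permutations of one another.
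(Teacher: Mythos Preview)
Your proof is correct and runs parallel to the paper's, but with a genuinely different choice of witness vector. Both arguments fix $s>s_\ast$, exhibit some $\boldsymbol{\sigma}$ with $\sigma_1+\cdots+\sigma_d=2s$, and then verify the pointwise majorization $\prod_j a_j^{-\sigma_j}\le\phi^s(\mathbf a)$ so that $\zeta_S(\boldsymbol{\sigma})\le\Phi_S(s)<\infty$. The paper takes $\widetilde{\boldsymbol{\sigma}}(s)=(2,\dots,2,2(s-k),0,\dots,0)$, i.e.\ the very weight vector that defines $\phi^s$ but applied to the \emph{unsorted} coordinates, and then invokes Lemma~\ref{lem:rearrangement} (pairing a nonincreasing weight with the sorted $\log a_{(i)}$) to obtain the pointwise bound. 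You instead take the diagonal vector $(2s/d,\dots,2s/d)$ and prove the pointwise bound by a zero-sum Abel argument, which is exactly the right tool since your two weight vectors are not permutations of each other. Your monotonicity check $c-2\le c-2(s-k)\le c$ and the sign-matching after subtracting $\log a_{(j_0)}$ are both fine.

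One small dividend of your route: the diagonal vector has strictly positive entries, so it lies in $\mathcal D_S\subset(0,\infty)^d$ without further comment. The paper's $\widetilde{\boldsymbol{\sigma}}(s)$ has zero coordinates whenever $k+1<d$, so strictly speaking a tiny perturbation (replacing the zeros by $\varepsilon>0$, which only decreases each term since $a_j\ge1$) is needed before concluding $\Lambda_S\le 2s$; your choice sidesteps this.
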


\begin{proof}
Fix $s>s_\ast$. Then $\Phi_S(s)=\sum_{\mathbf a\in S}\phi^s(\mathbf a)<\infty$.
Let $k\in\{0,1,\dots,d-1\}$ be the unique integer such that $k<s\le k+1$.

Define the exponent vector
\[
\widetilde{\boldsymbol{\sigma}}(s)
:=
(\underbrace{2,\dots,2}_{k\ \text{times}},\ 2(s-k),\ \underbrace{0,\dots,0}_{d-k-1\ \text{times}})
\in[0,\infty)^d,
\]
whose coordinates are non-increasing.
For $\mathbf a=(a_1,\dots,a_d)\in\N^d$, let $a_{(1)}\le\cdots\le a_{(d)}$ be the
nondecreasing rearrangement of its coordinates and set $x_i:=\log a_{(i)}$, so
$x_1\le\cdots\le x_d$.

Let $\tau\in S_d$ be the permutation such that $a_{\tau(i)}=a_{(i)}$ for
$1\le i\le d$. Applying Lemma~\ref{lem:rearrangement} with the nondecreasing
sequence $(x_i)_{i=1}^d$ and the nonincreasing weights
$(\widetilde{\sigma}_i(s))_{i=1}^d$, and taking $\pi=\tau$, we obtain
\[
\sum_{i=1}^d \widetilde{\sigma}_{\tau(i)}(s)\,x_i
\ \ge\
\sum_{i=1}^d \widetilde{\sigma}_i(s)\,x_i
=
2\sum_{i=1}^k x_i + 2(s-k)x_{k+1}.
\]
Since $\sum_{i=1}^d \widetilde{\sigma}_{\tau(i)}(s)\,x_i
=\sum_{j=1}^d \widetilde{\sigma}_j(s)\,\log a_j$, this yields
\[
\sum_{j=1}^d \widetilde{\sigma}_j(s)\,\log a_j
\ \ge\
2\sum_{i=1}^k \log a_{(i)} + 2(s-k)\log a_{(k+1)}.
\]
Exponentiating and using the definition of $\phi^s$ gives the pointwise bound
\[
\prod_{j=1}^d a_j^{-\widetilde{\sigma}_j(s)}
\le
\Bigl(\prod_{i=1}^k a_{(i)}^{-2}\Bigr)\,a_{(k+1)}^{-2(s-k)}
=
\phi^s(\mathbf a).
\]
Summing over $\mathbf a\in S$ gives
\[
\zeta_S\bigl(\widetilde{\boldsymbol{\sigma}}(s)\bigr)
=
\sum_{\mathbf a\in S}\prod_{j=1}^d a_j^{-\widetilde{\sigma}_j(s)}
\le
\sum_{\mathbf a\in S}\phi^s(\mathbf a)
=
\Phi_S(s)
<
\infty.
\]
Hence $\widetilde{\boldsymbol{\sigma}}(s)\in\mathcal D_S$ and therefore
\[
\Lambda_S
\le
\sum_{j=1}^d \widetilde{\sigma}_j(s)
=
2s.
\]
Letting $s\downarrow s_\ast$ completes the proof.
\end{proof}

\begin{remark}\label{rem:zeta-fibers}
The convergence region $\mathcal D_S$ is a global summability invariant and may
be dominated by a single thick coordinate fiber.
In particular, even if $S$ is sparse in most directions, the presence of one
coordinate fiber with slow decay can force $\Lambda_S$ to be large.
This reflects the fact that $\zeta_S$ records worst--case behavior across
coordinates rather than any frequency or distributional information.

If $S$ is uniformly $K$--balanced, then the above pathology disappears.
Indeed, for $\sigma>0$ one has
\[
\prod_{j=1}^d a_j^{-\sigma}\asymp_K a_{(1)}^{-\sigma d},
\qquad
\phi^s(\mathbf a)\asymp_K\prod_{j=1}^d a_j^{-2s/d}.
\]
Consequently, the diagonal Dirichlet series
$\zeta_S(\sigma,\dots,\sigma)$ and the scalar series $\Phi_S(s)$ become equivalent
after the change of variables $\sigma=2s/d$, and in this case
\[
s_\ast=\tfrac12\,\Lambda_S .
\]
\end{remark}

\subsection{Proof of Theorem~\ref{thm:zeta-vs-singular}}

\begin{proof}[Proof of Theorem~\ref{thm:zeta-vs-singular}]
(1) By Proposition~\ref{prop:upper-bound-universal} with $S_n\equiv S$, if
$\Phi_S(s)<1$ then $\dim_H(\widetilde{\mathcal E}_S)\le s$.
Taking the infimum over such $s$ yields
$\dim_H(\widetilde{\mathcal E}_S)\le s_\sharp$.

(2) Fix any $s>s_\ast$. Then $\Phi_S(s)=\sum_{\mathbf a\in S}\phi^s(\mathbf a)<\infty$.
For $M\in\N$ set
\[
S^{\ge M}:=\{\mathbf a\in S:\ a_{(1)}(\mathbf a)=\min_j a_j \ge M\}.
\]
Since $(S^{\ge M})_{M\ge1}$ is a decreasing family with $\bigcap_{M\ge1}S^{\ge M}=\varnothing$,
monotone convergence gives $\Phi_{S^{\ge M}}(s)\downarrow 0$ as $M\to\infty$.
Hence we may choose $M$ such that
\begin{equation}\label{eq:choose-M-Phi<1}
\Phi_{S^{\ge M}}(s)<1.
\end{equation}

Now let $\mathcal E_S^{(M,N)}$ be the set of points $\mathbf x\in(0,1)^d$ such that
$\mathbf a_n(\mathbf x)\in S$ for $1\le n<N$, $\mathbf a_n(\mathbf x)\in S^{\ge M}$ for all $n\ge N$,
and $a_n(x_j)\to\infty$ for each $1\le j\le d$.
Since $\min_j a_n(x_j)\to\infty$ for $\mathbf x\in\mathcal E_S$, every $\mathbf x\in\mathcal E_S$
belongs to $\mathcal E_S^{(M,N)}$ for some $N$, and therefore
\[
\mathcal E_S \subset \bigcup_{N\ge1} \mathcal E_S^{(M,N)}.
\]

By Lemma~\ref{lem:delete-finite}, deleting (or inserting) finitely many initial digit vectors does not
change Hausdorff dimension, hence for each $N$,
\[
\dim_H(\mathcal E_S^{(M,N)})=\dim_H(\mathcal E_{S^{\ge M}}).
\]
Consequently, since the Hausdorff dimension of a countable union is the supremum of the
Hausdorff dimensions of the pieces, we obtain
\[
\dim_H(\mathcal E_S)
\leq 
\dim_H\Bigl(\bigcup_{N\ge1}\mathcal E_S^{(M,N)}\Bigr)
\le
\sup_{N\ge1}\dim_H(\mathcal E_S^{(M,N)})
=
\dim_H(\mathcal E_{S^{\ge M}}).
\]

Finally, applying Proposition~\ref{prop:upper-bound-universal} to the stationary family
$S_n\equiv S^{\ge M}$ and using \eqref{eq:choose-M-Phi<1}, we obtain
$\dim_H(\mathcal E_{S^{\ge M}})\le s$, and therefore $\dim_H(\mathcal E_S)\le s$.
Since $s>s_\ast$ was arbitrary, it follows that $\dim_H(\mathcal E_S)\le s_\ast$.
In particular,
\[
  \dim_H(\mathcal E_S^{\mathrm{vec}})
  \le
  \dim_H(\mathcal E_S)
  \le
  s_\ast.
\]

(3) This is Proposition~\ref{prop:zeta-vs-singular}.

(4) Suppose that $S$ is uniformly $K$--balanced, that is,
$\max_j a_j \le K \min_j a_j$ for all $\mathbf a\in S$.
Then for every $\sigma>0$ and $\mathbf a\in S$,
\[
\prod_{j=1}^d a_j^{-\sigma}
\asymp_K
a_{(1)}^{-\sigma d},
\]
where $a_{(1)}=\min_j a_j$.
Similarly, for $s\in(0,d]$ one has
\[
\phi^s(\mathbf a)
=
\Bigl(\prod_{j=1}^k a_{(j)}^{-2}\Bigr)\,a_{(k+1)}^{-2(s-k)}
\asymp_K
\prod_{j=1}^d a_j^{-2s/d},
\]
with implicit constants depending only on $K,d,s$.

Consequently, after the change of variables $\sigma=2s/d$, the diagonal Dirichlet
series and the singular series are equivalent in the sense that
\[
\zeta_S(\sigma,\dots,\sigma)<\infty
\quad\Longleftrightarrow\quad
\Phi_S(s)<\infty.
\]
By definition of $\Lambda_S$ and $s_\ast$, this implies
\[
\Lambda_S = 2 s_\ast,
\]
and hence $s_\ast=\tfrac12\,\Lambda_S$ in the uniformly $K$--balanced case.

\end{proof}

\subsection{Proof of Theorem~\ref{thm:full-restriction}}

A key step is to bound the Hausdorff dimension of certain product sets arising from
one--dimensional tail continued--fraction sets. We reduce the $d$--dimensional
coordinatewise problem to one dimension using two standard inputs: (i) for tail
alphabets in dimension one, $\dim_H=\dim_B=\dim_P$; and (ii) a product inequality for
Hausdorff dimension whose upper bound involves packing dimension. We will also use
a quantitative estimate for $\dim_H(E_{\ge N})$ due to Good.

For $N\ge2$ set
\[
E_{\ge N}:=\{x\in(0,1): a_n(x)\ge N \text{ for all } n\ge1\}.
\]
\begin{lemma}\label{lem:HPB-tail}
We have
\[
\dim_H(E_{\ge N})=\dim_B(E_{\ge N})=\dim_P(E_{\ge N}),
\]
where $\dim_B$ and $\dim_P$ denote the (upper) box and packing dimensions.
\end{lemma}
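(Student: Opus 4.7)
The plan is to realize $E_{\ge N}$ as the limit set of an infinite conformal iterated function system (CIFS) and then apply the Mauldin–Urbański dimension theorem for regular systems. For each integer $a\ge N$ let $T_a\colon[0,1]\to[0,1]$ be the inverse Gauss branch $T_a(y):=1/(a+y)$; the family $\Phi:=\{T_a\}_{a\ge N}$ is a countable conformal IFS on $[0,1]$. One checks directly that $\|T_a'\|_\infty\asymp a^{-2}\le N^{-2}<1$, bounded distortion holds on $(0,1)$ (since each $T_a$ is M\"obius), and the open set condition is satisfied with $U=(0,1)$ because the images $T_a((0,1))\subset(1/(a+1),1/a)$ are pairwise disjoint. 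By construction the limit set of $\Phi$ is exactly $E_{\ge N}$.

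Next I would verify that $\Phi$ is \emph{regular} in the sense of Mauldin–Urbański, i.e., that the topological pressure $P(s)$ of the geometric potential $-s\log|T'|$ has a zero in $(1/2,1]$. Up to bounded distortion, $P(s)$ is controlled by the series $\sum_{a\ge N}a^{-2s}$, which converges for $s>1/2$ and diverges for $s\le 1/2$; hence $P(s)\to+\infty$ as $s\downarrow 1/2$. At $s=1$, the cylinder-length sum satisfies $\sum_{a_1,\dots,a_n\ge N}|I(a_1,\dots,a_n)|\le 1$ by disjointness in $[0,1]$, and the missing digits $1,\dots,N-1$ force strict loss, giving $P(1)<0$. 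Continuity and monotonicity of $P$ on $(1/2,1]$ then yield a unique $s_N\in(1/2,1)$ with $P(s_N)=0$.

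With regularity in hand, the Mauldin–Urbański theorem (\cite[Thm.~3.15]{MUbook}; see also \cite{MauldinUrbanski1999}) asserts that for a regular CIFS satisfying the open set condition,
$$
\dim_H(J_\Phi)=\dim_P(J_\Phi)=\overline{\dim}_B(J_\Phi)=s_N.
$$
Since $J_\Phi=E_{\ge N}$, this is exactly the claimed three-way equality.

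The main obstacle is the regularity check; once the pressure equation is known to have a zero $s_N\in(1/2,1)$, the coincidence of the three dimensions is a black-box consequence of the CIFS machinery. I would also point out that $\dim_H\le\dim_P\le\overline{\dim}_B$ holds in general for totally bounded subsets of $[0,1]$, so the substantive content of the lemma is the reverse inequality $\overline{\dim}_B(E_{\ge N})\le\dim_H(E_{\ge N})$; this is precisely what Mauldin–Urbański delivers once the system has been shown to be regular, without any continued-fraction-specific refinement.
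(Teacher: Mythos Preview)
Your approach via the Mauldin--Urba\'nski CIFS framework is the same route the paper takes, and your verification that the tail system $\{T_a\}_{a\ge N}$ is regular is correct and more explicit than what the paper provides. However, there is a genuine gap in the final step: regularity alone does \emph{not} imply $\dim_H(J_\Phi)=\overline{\dim}_B(J_\Phi)$ for an infinite conformal IFS. The paper's own remark immediately following this lemma makes the failure explicit: for $p\ge2$ and $l$ large, the digit set $I_l=\{n^p:n\ge l\}$ yields a continued-fraction CIFS that is regular (indeed strongly regular, since $\sum_{n\ge l}n^{-1}=+\infty$ forces $P(s)\to+\infty$ as $s\downarrow\theta=1/(2p)$, by exactly the same reasoning you give for the tail system), yet \cite[Theorem~6.2]{MauldinUrbanski1999} shows $\dim_H(\widetilde{\mathcal E}_{I_l})<\overline{\dim}_B(\widetilde{\mathcal E}_{I_l})$. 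So the black-box you invoke as \cite[Thm.~3.15]{MUbook} cannot be asserting what you claim; that theorem gives the Bowen formula $\dim_H(J_\Phi)=s_N$, not the coincidence with box/packing dimension.

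What the paper actually cites is a \emph{tail-alphabet-specific} statement: the remark in \cite[\S6, p.~5023]{MauldinUrbanski1999} records the three-way equality for $E_{\ge N}$ directly, and the alternative route combines \cite[Corollary~5.9]{MauldinUrbanski1999} (a criterion for when $\dim_H=\overline{\dim}_B$ in the continued-fraction setting) with the general identity $\dim_P=\overline{\dim}_B$ from \cite[Theorem~3.1]{MauldinUrbanski1996_PLMS}. The additional input beyond regularity is a structural feature of the tail alphabet that lacunary alphabets such as $\{n^p\}$ lack. Your argument would be complete if you replaced the appeal to regularity by one of these more specific citations, or supplied the relevant criterion from \cite[Corollary~5.9]{MauldinUrbanski1999} and checked it for $\{a\ge N\}$.
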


\begin{proof}
This is a standard fact for one--dimensional continued fractions with a tail alphabet.
A direct reference is \cite[Section~6, Remark, p.~5023]{MauldinUrbanski1999}, which
records the coincidence of Hausdorff, box, and packing dimensions in this setting.

Alternatively, one may combine \cite[Corollary~5.9]{MauldinUrbanski1999} with the
general identity $\dim_P=\dim_B$ for conformal iterated function systems
(\cite[Theorem~3.1]{MauldinUrbanski1996_PLMS}) and the discussion in
\cite[Section~6]{MauldinUrbanski1999}.
\end{proof}

\begin{remark}
We emphasize that the coincidence in Lemma~\ref{lem:HPB-tail} is not automatic for
general conformal iterated function systems. In fact, Mauldin--Urba\'nski
\cite[Theorem~6.2]{MauldinUrbanski1999} exhibited many examples where Hausdorff
dimension is strictly smaller than box/packing dimension: for every $p\ge2$ there
exists $q\ge1$ such that if $l\ge q$ and $I_l:=\{n^{p}: n\ge l\}$, then the
corresponding continued--fraction digit--restricted set satisfies
\[
\dim_H(\widetilde{\mathcal E}_{I_l})
<
\underline{\dim}_B(\widetilde{\mathcal E}_{I_l})
\le
\overline{\dim}_B(\widetilde{\mathcal E}_{I_l})
=
\dim_P(\widetilde{\mathcal E}_{I_l}).
\]
Here $\underline{\dim}_B$ and $\overline{\dim}_B$ denote the lower and upper box
dimensions, respectively, and $\dim_P$ denotes the packing dimension. In particular,
$\dim_H=\dim_B=\dim_P$ requires additional structure (such as the tail--alphabet
situation) and should not be viewed as a general feature of conformal IFS.
\end{remark}

We will use the following product inequality; see, e.g., \cite{Marstrand1954}.

\begin{lemma}\label{lem:marstrand-product}
Suppose that $A\subset\R^{d}$ and $B\subset\R^{t}$ are Borel sets. Then
\begin{equation}\label{eq:marstrand-product}
\dim_H A+\dim_H B
\ \le\
\dim_H(A\times B)
\ \le\
\dim_H A+\dim_P B,
\end{equation}
where $\dim_P$ denotes the packing dimension.
\end{lemma}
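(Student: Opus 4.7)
The plan is to prove the two inequalities separately by standard Marstrand-type techniques: the lower bound via Frostman measures and the mass distribution principle, and the upper bound via a two-scale covering argument coupled with the countable-decomposition characterization of packing dimension.

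For the lower bound $\dim_H A+\dim_H B\le \dim_H(A\times B)$, I would fix $s<\dim_H A$ and $t<\dim_H B$ and use Frostman's lemma to produce finite nonzero Borel measures $\mu$ on $A$ and $\nu$ on $B$ satisfying $\mu(B(x,r))\le Cr^s$ and $\nu(B(y,r))\le Cr^t$ on every ball. Endowing $\R^{d+t}$ with the $\ell^\infty$ metric (bi-Lipschitz equivalent to the Euclidean metric, hence preserving Hausdorff dimension), a product ball decomposes as $B((x_0,y_0),r)=B(x_0,r)\times B(y_0,r)$, so
\[
(\mu\otimes\nu)\bigl(B((x_0,y_0),r)\bigr)
=\mu(B(x_0,r))\,\nu(B(y_0,r))
\le C^2\,r^{\,s+t}.
\]
The mass distribution principle then forces $\mathcal H^{s+t}(A\times B)>0$, so $\dim_H(A\times B)\ge s+t$. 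Letting $s\uparrow\dim_H A$ and $t\uparrow\dim_H B$ completes this direction.

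For the upper bound $\dim_H(A\times B)\le \dim_H A+\dim_P B$, the key input is the standard modified-box-dimension characterization of packing dimension,
\[
\dim_P B=\inf\Bigl\{\sup_{i\ge 1}\overline{\dim}_B B_i:\ B=\bigcup_{i\ge 1}B_i\Bigr\}.
\]
Given $t>\dim_P B$, write $B=\bigcup_iB_i$ with $\overline{\dim}_B B_i<t$ for each $i$. By countable stability of $\dim_H$, it suffices to bound $\dim_H(A\times B_i)$ for each fixed $i$. Now fix $s>\dim_H A$ and $\varepsilon>0$. Since $\mathcal H^s(A)=0$, choose a cover $\{U_j\}$ of $A$ with $\diam(U_j)$ uniformly small and $\sum_j\diam(U_j)^s<\varepsilon$; for each $j$, the upper box bound supplies a cover of $B_i$ by at most $C\,\diam(U_j)^{-t}$ sets $V_{j,k}$ of diameter at most $\diam(U_j)$. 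The rectangles $U_j\times V_{j,k}$ then cover $A\times B_i$ with $\diam(U_j\times V_{j,k})\asymp\diam(U_j)$, and
\[
\sum_{j,k}\diam(U_j\times V_{j,k})^{\,s+t}
\ \lesssim\ \sum_j\diam(U_j)^{-t}\,\diam(U_j)^{s+t}
=\sum_j\diam(U_j)^s<\varepsilon.
\]
Letting $\varepsilon\downarrow 0$ gives $\mathcal H^{s+t}(A\times B_i)=0$, and then $s\downarrow\dim_H A$, $t\downarrow\dim_P B$ yields the claim.

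The main obstacle is the upper bound. The lower bound is a routine Frostman/mass-distribution computation, but a naive covering argument only delivers $\dim_H(A\times B)\le\dim_H A+\overline{\dim}_B B$, which is strictly weaker than the stated bound. Recovering the packing-dimension version genuinely requires the decomposition characterization of $\dim_P$ together with careful alignment of the box cover of each piece $B_i$ with the chosen Hausdorff cover of $A$ at the same scale, so that the two-scale sum telescopes cleanly. In the paper's applications, where $B$ is a tail continued-fraction set and Lemma~\ref{lem:HPB-tail} yields $\dim_H=\dim_P$, this subtlety disappears and the two inequalities in \eqref{eq:marstrand-product} collapse to an equality, which is what makes Lemma~\ref{lem:marstrand-product} sufficient for pinning down the product dimension.
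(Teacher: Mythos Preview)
Your proof is correct and follows the standard textbook route (Frostman measures and the mass distribution principle for the lower bound, Tricot's modified-box-dimension characterization of $\dim_P$ plus a two-scale cover for the upper bound). The paper, however, does not prove this lemma at all: it simply records the statement and cites \cite{Marstrand1954} as a reference, treating the inequality as a known black box. So there is nothing to compare at the level of argument; you have supplied a proof where the paper only gives a citation. One minor historical point: the lower bound is indeed Marstrand's, but the upper bound with $\dim_P$ is due to Tricot, so the paper's single citation is slightly imprecise, whereas your argument makes the actual mechanism transparent.
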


\begin{lemma}[Good's estimate for large partial quotients {\cite{Good1941}}]
\label{lem:good1941}
Assume $N\ge 20$. Then
\[
\frac12+\frac{1}{2\log(N+2)}
\;<\;
\dim_H(E_{\ge N})
\;<\;
\frac12+\frac{\log\log(N-1)}{2\log(N-1)}.
\]
\end{lemma}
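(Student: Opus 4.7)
The plan is to derive the two bounds separately, using the singular-value covering machinery of Section~\ref{sec:upper-bound} for the upper estimate and a finite-alphabet Moran subsystem for the lower estimate. The statement is classical and is due to Good~\cite{Good1941}; what follows adapts that argument to the framework of the present paper.

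For the upper bound, apply Proposition~\ref{prop:upper-bound-universal} in dimension $d=1$ with the stationary family $S_n\equiv S:=\{a\in\N:a\ge N\}$, so that $\phi^s(a)=a^{-2s}$ and $\Phi_S(s)=\sum_{a\ge N}a^{-2s}$. Set $s^+:=\tfrac12+\eta$ with $\eta:=\frac{\log\log(N-1)}{2\log(N-1)}$. Since $x\mapsto x^{-(1+2\eta)}$ is decreasing,
\[
\Phi_S(s^+)\ \le\ \int_{N-1}^{\infty}x^{-(1+2\eta)}\,dx\ =\ \frac{(N-1)^{-2\eta}}{2\eta}.
\]
Direct substitution gives $(N-1)^{-2\eta}=1/\log(N-1)$ and $2\eta=\log\log(N-1)/\log(N-1)$, so $\Phi_S(s^+)\le 1/\log\log(N-1)$, which is strictly less than $1$ as soon as $N\ge 20$. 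Taking $s=s^+-\varepsilon$ with $\varepsilon>0$ sufficiently small, monotonicity of $s\mapsto\Phi_S(s)$ preserves $\Phi_S(s)<1$, and Proposition~\ref{prop:upper-bound-universal} yields $\dim_H(E_{\ge N})\le s<s^+$, which is the desired strict upper bound.

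For the lower bound, set $s^-:=\tfrac12+\frac{1}{2\log(N+2)}$, choose $M=M(N)$ large, and build a Cantor-like subset $F_M\subset E_{\ge N}$ supported on words from the finite alphabet $A_M:=\{N,N+1,\ldots,M\}$, i.e.\ the attractor of the conformal IFS $\{\phi_a(x)=1/(a+x)\}_{a\in A_M}$. By Bowen's formula for finite conformal IFS---or, equivalently, by constructing the Bernoulli measure on cylinders with weights proportional to $\prod_i a_i^{-2s}$ and running the mass distribution principle against the cylinder-length estimate from Lemma~\ref{lem:1d_geometry}---one has $\dim_H(F_M)=t(M)$, where $t(M)$ is the unique root of an approximate pressure equation of the form $\sum_{a=N}^{M}a^{-2t}=1$. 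Integral comparison at $t=s^-$ gives
\[
\sum_{a=N}^{M}a^{-1-1/\log(N+2)}\ \ge\ \log(N+2)\Bigl[N^{-1/\log(N+2)}-M^{-1/\log(N+2)}\Bigr],
\]
and since $N^{-1/\log(N+2)}\to e^{-1}$ as $N\to\infty$, the right-hand side exceeds $1$ once $M$ is large enough in terms of $N$. By monotonicity $t(M)>s^-$, hence $\dim_H(E_{\ge N})\ge\dim_H(F_M)>s^-$.

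The main technical obstacle is tracking the precise distortion between $|I(a_1,\ldots,a_n)|$ and $\prod_i a_i^{-2}$ in both directions so that the explicit threshold $N\ge 20$ is sharp. Our upper-bound integral $\Phi_S(s^+)\le 1/\log\log(N-1)$ already needs $N\ge 20$ in order for this ratio to fall below $1$, and the lower bound analogously requires controlling the Bowen root with concrete absolute constants rather than asymptotic ones. Since the estimates in \cite{Good1941} were calibrated with exactly this threshold in mind, and since Lemma~\ref{lem:good1941} is used in the sequel only to pass $N\to\infty$ and recover $\dim_H\to 1/2$, the most efficient route in the final write-up is to invoke \cite{Good1941} directly; the paragraphs above record how one would reproduce the bounds inside the framework of Section~\ref{sec:upper-bound}.
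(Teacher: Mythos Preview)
The paper does not prove this lemma: it is stated with a citation to Good~\cite{Good1941} and used as a black box in the proof of Theorem~\ref{thm:full-restriction}. So there is no argument in the paper to compare your sketch against; your final recommendation to simply invoke \cite{Good1941} is exactly what the paper does.

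Your sketch is nonetheless sound and fits well into the framework of Section~\ref{sec:upper-bound}. One small slip: in the upper-bound paragraph you write that ``monotonicity of $s\mapsto\Phi_S(s)$ preserves $\Phi_S(s)<1$'' when passing from $s^+$ to $s^+-\varepsilon$. In fact $\Phi_S$ is \emph{decreasing} in $s$, so $\Phi_S(s^+-\varepsilon)>\Phi_S(s^+)$; what you need is \emph{continuity} of $\Phi_S$ together with the strict inequality $\Phi_S(s^+)<1$ to conclude that $\Phi_S(s^+-\varepsilon)<1$ for small $\varepsilon$. You also correctly flag that nailing down the precise threshold $N\ge 20$ in the lower bound requires tracking the distortion between $|I(a_1,\ldots,a_n)|$ and $\prod_i a_i^{-2}$ (equivalently, between the approximate pressure equation $\sum_a a^{-2t}=1$ and the true Bowen root), and that this bookkeeping is already done in \cite{Good1941}. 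Since the only use of the lemma downstream is the limit $\dim_H(E_{\ge N})\to\tfrac12$, your observation that the explicit constants are inessential for the paper is on point.
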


\begin{proof}[Proof of Theorem~\ref{thm:full-restriction}]
By \cite[Theorem~1]{Good1941}, the set
\[
E_\infty:=\{x\in(0,1): a_n(x)\to\infty\}
\]
satisfies $\dim_H(E_\infty)=\tfrac12$.
Since $\mathcal E_{\N^d}=E_\infty^d$, iterating the left inequality in
Lemma~\ref{lem:marstrand-product} yields
\[
\dim_H(\mathcal E_{\N^d})
=
\dim_H(E_\infty^d)
\ge
d\,\dim_H(E_\infty)
=
\frac d2.
\]
Moreover, $\mathcal E_{\N^d}\subset \widetilde{\mathcal E}_N$ for every $N$, and therefore
\begin{equation}\label{eq:lower-from-Einfty}
\frac d2
\le
\dim_H(\widetilde{\mathcal E}_N)
\qquad\text{for all }N.
\end{equation}

Next, for $N\ge2$ set
\[
E_{\ge N}:=\{x\in(0,1): a_n(x)\ge N \text{ for all }n\ge1\},
\qquad
\widetilde{\mathcal E}_N=(E_{\ge N})^d.
\]
By Lemma~\ref{lem:HPB-tail} we have
\begin{equation}\label{eq:HPB-equal}
\dim_H(E_{\ge N})=\dim_B(E_{\ge N})=\dim_P(E_{\ge N}).
\end{equation}

We first obtain an upper bound for $\dim_H(\widetilde{\mathcal E}_N)$ in terms of
$\dim_H(E_{\ge N})$. By Lemma~\ref{lem:marstrand-product} and \eqref{eq:HPB-equal},
\[
\dim_H\bigl((E_{\ge N})\times (E_{\ge N})\bigr)
\le
\dim_H(E_{\ge N})+\dim_P(E_{\ge N})
=
2\,\dim_H(E_{\ge N}).
\]
Iterating this estimate gives
\begin{equation}\label{eq:product-upper}
\dim_H(\widetilde{\mathcal E}_N)
=
\dim_H\bigl((E_{\ge N})^d\bigr)
\le
d\,\dim_H(E_{\ge N}).
\end{equation}

On the other hand, Lemma~\ref{lem:good1941} implies that $\dim_H(E_{\ge N})\to\tfrac12$
as $N\to\infty$. Combining this with \eqref{eq:product-upper} yields
\[
\limsup_{N\to\infty}\dim_H(\widetilde{\mathcal E}_N)
\le
\limsup_{N\to\infty} d\,\dim_H(E_{\ge N})
=
\frac d2.
\]
Together with \eqref{eq:lower-from-Einfty}, we conclude that
\[
\lim_{N\to\infty}\dim_H(\widetilde{\mathcal E}_N)=\frac d2.
\]
\end{proof}

\section{Moran Fractal Constructions in Higher Dimensions}\label{sec:moran}   
  
This section recalls a standard lower bound for Moran--type constructions in $\R^d$
using the mass distribution principle; see \cite{Falconer2014,Mattila1995} for background.
The resulting bounds need not be sharp in general, and all constructions are based on
coordinate cubes.

    \begin{definition}[Moran fractal]\label{def:moran-nd}
    Let $d\in\mathbb{N}$. Let $(r_n)_{n\ge1}$ and $(\delta_n)_{n\ge1}$ satisfy
    $r_n\ge2$ and $0<\delta_{n+1}<\delta_n$ for all $n$.
    A compact set $F\subset[0,1]^d$ is called a \emph{Moran fractal with parameters
    $(r_n)$ and $(\delta_n)$} if there exist families $\mathcal{Q}_n$ of finitely
    many pairwise disjoint closed cubes in $[0,1]^d$ such that, writing
    \[
    E_n:=\bigcup_{Q\in\mathcal{Q}_n}Q,\qquad F:=\bigcap_{n\ge0}E_n,
    \]
    the following hold for every $n\ge1$:
    \begin{enumerate}
    \item \textup{(Nestedness)} For each $Q\in\mathcal{Q}_n$ there is a unique
          parent $\widehat Q\in\mathcal{Q}_{n-1}$ with $Q\subset \widehat Q$.
    \item \textup{(Branching)} Each parent cube $P\in\mathcal{Q}_{n-1}$ contains at
          least $r_n$ distinct children from $\mathcal{Q}_n$.
    \item \textup{(Separation inside a parent)} If $Q,Q'\in\mathcal{Q}_n$ are distinct
          and have the same parent in $\mathcal{Q}_{n-1}$, then
          $\operatorname{dist}(Q,Q')\ge \delta_n$.
    \item \textup{(Vanishing mesh)} $\max_{Q\in\mathcal{Q}_n}\diam(Q)\to 0$ as $n\to\infty$.
    \end{enumerate}
    \end{definition}
    
    \begin{lemma}[Generalized Moran's formula, lower bound]\label{lem:moran-lb-nd}
    Let $F\subset [0,1]^d$ be a Moran fractal with parameters $(r_n)$ and $(\delta_n)$.
    Then
    \begin{equation}\label{eq:moran-lower-bound}
    \dim_H F
    \;\ge\;
    \liminf_{n\to\infty}
    \frac{\log(r_1\cdots r_{n-1})}{-\log(r_n^{1/d}\delta_n)}.
    \end{equation}
    \end{lemma}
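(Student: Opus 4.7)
The plan is to build a natural Moran measure $\mu$ on $F$ and deduce the lower bound by the mass distribution principle. First, inside each parent $P\in\mathcal Q_{n-1}$ I select a designated sub-family of exactly $r_n$ children, and let $\mathcal Q'_n\subset\mathcal Q_n$ be the resulting surviving level-$n$ family. A standard inverse-limit/weak-$*$ construction then produces a Borel probability measure $\mu$ supported on $\bigcap_n\bigcup_{Q\in\mathcal Q'_n}Q\subset F$, with $\mu(Q)=(r_1\cdots r_n)^{-1}$ for every surviving $Q\in\mathcal Q'_n$; consistency comes from the branching and nestedness conditions.

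The key geometric input is a \emph{global} separation at level $n$. Although Definition~\ref{def:moran-nd} only posits $\delta_n$-separation between siblings, two cubes $Q,Q'\in\mathcal Q_n$ whose most recent common ancestor lies at level $m-1\le n-1$ are automatically at distance $\ge\delta_m\ge\delta_n$, since $Q$ and $Q'$ are contained in the sibling $\mathcal Q_m$-cubes that split at level $m$. Combined with the fact that the finitely many cubes of $\mathcal Q_0$ have positive pairwise distance exceeding $\delta_n$ for all large $n$ (because vanishing mesh and $\delta_n\le \max_{P\in\mathcal Q_{n-1}}\diam P\to 0$), this shows that for every sufficiently large $n$ the entire family $\mathcal Q_n$ is pairwise $\delta_n$-separated. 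A volume packing estimate in a parent containing $r_n$ sibling cubes at pairwise distance $\delta_n$ also yields $r_n^{1/d}\delta_n\le C_d\,\max_{P\in\mathcal Q_{n-1}}\diam P\to 0$, so for every sufficiently small $r>0$ there is a well-defined largest integer $n=n(r)$ with $r\le r_n^{1/d}\delta_n$, and $n(r)\to\infty$ as $r\to 0$.

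For such $r$ and $n=n(r)$, an $\R^d$-volume packing estimate applied to the $\delta_n$-separated family $\mathcal Q'_n\subset\mathcal Q_n$ gives
\[
\#\{Q\in\mathcal Q'_n\,:\,Q\cap B(x,r)\neq\emptyset\}
\ \le\ C_d\bigl(1+r/\delta_n\bigr)^d
\ \le\ C_d\bigl(1+r_n^{1/d}\bigr)^d
\ \le\ C'_d\,r_n,
\]
hence
\[
\mu(B(x,r))
\ \le\ C'_d\,r_n\cdot(r_1\cdots r_n)^{-1}
\ =\ \frac{C'_d}{r_1\cdots r_{n-1}}.
\]
Now fix $s$ strictly less than the liminf in~\eqref{eq:moran-lower-bound}. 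By the definition of liminf, for all sufficiently large $n$ one has $(r_1\cdots r_{n-1})^{-1}\le(r_n^{1/d}\delta_n)^s$. Substituting $n=n(r)$ and using $r_{n(r)}^{1/d}\delta_{n(r)}\ge r$ yields $\mu(B(x,r))\le C''_d\,r^s$ for all sufficiently small $r$. The mass distribution principle (see, e.g., \cite{Falconer2014,Mattila1995}) then gives $\dim_H F\ge s$; letting $s$ tend to the liminf finishes the proof.

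The main obstacle is the first geometric observation, namely that the sibling-only separation of Definition~\ref{def:moran-nd} actually upgrades to a genuine global separation of $\mathcal Q_n$ at the same level; this is not immediate and has to be extracted by descent to the most recent common ancestor, together with the elementary fact that $\delta_n\to 0$. Once this is in place, the scale-matching $r\asymp r_n^{1/d}\delta_n$ and the packing-plus-mass-distribution recipe proceed as in the classical one-dimensional Moran argument, with the only dimensional cost being the factor $r_n^{1/d}$ that accounts for how many $\delta_n$-separated children a $d$-dimensional ball of radius $r$ can absorb.
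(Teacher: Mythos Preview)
Your construction of the Moran measure and the upgrade of sibling separation to global $\delta_n$-separation (via descent to the most recent common ancestor) are correct and match the paper. However, the final scale-matching step contains an inequality in the wrong direction that breaks the argument.

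You obtain $\mu(B(x,r))\le C'_d(r_1\cdots r_{n-1})^{-1}\le C'_d(r_n^{1/d}\delta_n)^s$ at your chosen level $n=n(r)$, and then write that ``using $r_{n(r)}^{1/d}\delta_{n(r)}\ge r$ yields $\mu(B(x,r))\le C''_d\,r^s$.'' But $r_n^{1/d}\delta_n\ge r$ gives $(r_n^{1/d}\delta_n)^s\ge r^s$, not $\le$; so all you have shown is that $\mu(B(x,r))$ is bounded by a quantity which is \emph{at least} $C'_d r^s$. When the sequence $(r_n^{1/d}\delta_n)_n$ has large gaps between consecutive terms, the ratio $(r_{n(r)}^{1/d}\delta_{n(r)})/r$ is unbounded as $r\to 0$, and no uniform Frostman bound follows from your estimate. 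In effect, by insisting on $r\le r_n^{1/d}\delta_n$ in order to force the packing count $\le C'_d r_n$, you have thrown away the information $N(U)\le C_d(r/\delta_n)^d$ that actually carries the factor $r^s$.

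The paper fixes this by choosing $n$ via $\delta_n\le 2r<\delta_{n-1}$ and keeping \emph{both} bounds: the packing bound $N(U)\le C(r/\delta_n)^d$ and the one-parent bound $N(U)\le r_n$ (since $\operatorname{diam} B(x,r)<\delta_{n-1}$ forces $B(x,r)$ to meet at most one level-$(n{-}1)$ cube). The decisive step is the interpolation
\[
N(U)\le\min\{r_n,\,C(r/\delta_n)^d\}\le r_n^{1-s/d}\bigl(C(r/\delta_n)^d\bigr)^{s/d}
= C^{s/d}\,r^s\,r_n^{1-s/d}\,\delta_n^{-s},
\]
which produces the factor $r^s$ directly and yields
$\mu(B(x,r))\le C^{s/d} r^s\big/\bigl(R_{n-1}(r_n^{1/d}\delta_n)^s\bigr)$.
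The liminf hypothesis then makes the denominator $\ge 1$ for all large $n$, giving the Frostman bound. Your route can be repaired by inserting exactly this interpolation step; without it the argument does not close.
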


    The proof of Lemma~\ref{lem:moran-lb-nd} relies on several lemmas. We state them first.
    
    \begin{lemma}[Packing estimate in $\R^d$]\label{lem:packing}
    Let $d\in\N$. There exists a constant $C_d\ge1$ depending only on $d$ such that
    the following holds.
    
    Let $A\subset\R^d$ be bounded and let $\delta>0$. Suppose $S\subset A$ is finite and
    \[
    \|x-y\|\ge \delta\qquad\text{for all distinct }x,y\in S.
    \]
    Then
    \[
    \#S \le C_d\Bigl(\frac{\diam(A)}{\delta}+1\Bigr)^d.
    \]
    In particular, if $\#S\ge2$, then
    \[
    \diam(A)\ \ge\ c_d\,(\#S)^{1/d}\,\delta
    \]
    for some constant $c_d>0$ depending only on $d$.
    \end{lemma}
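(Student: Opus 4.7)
The plan is to prove Lemma~\ref{lem:packing} by the classical volume-packing argument: replace each point $x \in S$ by a small ball that is guaranteed to be disjoint from the others, and then compare the total volume of these balls to the volume of a ball containing everything.

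More precisely, I would start by noting that the open balls $B(x,\delta/2)$ for $x \in S$ are pairwise disjoint, since for distinct $x,y \in S$ we have $\|x-y\| \ge \delta$ by hypothesis, so no point of $\R^d$ can be within distance $\delta/2$ of both. Next, fix any $x_0 \in S$ (the case $S = \varnothing$ is trivial). Since $A$ has diameter $\diam(A)$, the inclusion $A \subset \overline{B}(x_0,\diam(A))$ holds. Therefore each $B(x,\delta/2)$ with $x \in S$ is contained in $\overline{B}(x_0,\diam(A)+\delta/2)$. Writing $\omega_d$ for the volume of the unit ball in $\R^d$, the disjointness gives
\[
\#S \cdot \omega_d (\delta/2)^d \;\le\; \omega_d \bigl(\diam(A)+\delta/2\bigr)^d,
\]
and rearranging yields
\[
\#S \;\le\; \Bigl(\frac{2\diam(A)}{\delta}+1\Bigr)^d \;\le\; 2^d\Bigl(\frac{\diam(A)}{\delta}+1\Bigr)^d,
\]
so $C_d = 2^d$ works.

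For the second assertion, when $\#S \ge 2$ pick two distinct $x,y \in S$; the separation hypothesis immediately forces $\diam(A) \ge \|x-y\| \ge \delta$, hence $\diam(A)/\delta \ge 1$. Consequently $\diam(A)/\delta + 1 \le 2\diam(A)/\delta$, and substituting back into the first bound gives $\#S \le C_d 2^d (\diam(A)/\delta)^d$, i.e.\ $\diam(A) \ge c_d (\#S)^{1/d}\,\delta$ with $c_d := (C_d 2^d)^{-1/d} = 1/(2 C_d^{1/d})$.

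There is essentially no obstacle here; the only point requiring a moment of care is that the constant $c_d$ in the lower bound needs the preliminary observation $\diam(A) \ge \delta$ to absorb the additive ``$+1$'' inside the parenthesis, which is precisely where the assumption $\#S \ge 2$ is used. Everything else is a routine volume comparison in $\R^d$, and the constants depend only on $d$.
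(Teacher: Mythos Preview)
Your proof is correct and follows essentially the same volume-packing argument as the paper: both place disjoint balls of radius $\delta/2$ around the points of $S$, enclose them in a single large ball of radius $\diam(A)+\delta/2$, and compare volumes, then use $\diam(A)\ge\delta$ when $\#S\ge2$ to absorb the additive constant. The only differences are cosmetic (open vs.\ closed balls, and slightly different bookkeeping of constants yielding $C_d=2^d$ rather than the paper's $3^d$).
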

    
    \begin{proof}
    Write $m:=\#S$ and $D:=\diam(A)$. Fix $x_0\in A$. Then $A\subset B(x_0,D)$.
    For each $x\in S$ consider the closed ball $B(x,\delta/2)$. These balls are pairwise
    disjoint since $\|x-y\|\ge\delta$ for $x\neq y$. Moreover, for any $y\in B(x,\delta/2)$,
    \[
    \|y-x_0\|\le \|y-x\|+\|x-x_0\|\le \frac{\delta}{2}+D,
    \]
    so $B(x,\delta/2)\subset B(x_0,D+\delta/2)$.
    
    Let $\operatorname{vol}\bigl(B(0,1)\bigr)$ denote the volume of the unit ball in $\mathbb{R}^d$.
    Comparing volumes gives
    \[
    m\cdot \operatorname{vol}\bigl(B(0,1)\bigr)\Bigl(\frac{\delta}{2}\Bigr)^d
    \le \operatorname{vol}\Bigl(B\Bigl(x_0,D+\frac{\delta}{2}\Bigr)\Bigr)
    = \operatorname{vol}\bigl(B(0,1)\bigr)\Bigl(D+\frac{\delta}{2}\Bigr)^d.
    \]
    
    Cancelling $v_d$ yields
    \[
    m \le \Bigl(\frac{D+\delta/2}{\delta/2}\Bigr)^d
    = \Bigl(\frac{2D}{\delta}+1\Bigr)^d
    \le 3^d\Bigl(\frac{D}{\delta}+1\Bigr)^d,
    \]
    which proves the first claim with $C_d:=3^d$.
    
    For the second claim, assume $m\ge2$. Then $D\ge \delta$ (since $S$ contains two
    points at distance at least $\delta$), hence $2D/\delta+1\le 3D/\delta$. Therefore
    \[
    m \le \Bigl(\frac{2D}{\delta}+1\Bigr)^d \le \Bigl(\frac{3D}{\delta}\Bigr)^d,
    \]
    so $D \ge 3^{-1} m^{1/d}\delta$. Thus we may take $c_d:=1/3$.
    \end{proof}

    We now state the second lemma, which establishes an upper bound for the lower limit.
    \begin{lemma}\label{lem:s-at-most-d}
    With the notation of Lemma~\ref{lem:moran-lb-nd}, assume moreover that
    $r_k \ge 2$ for all $k\in\mathbb{N}$.
    Then
    \[
    \liminf_{n\to\infty}
    \frac{\log(r_1\cdots r_{n-1})}{-\log L_n} \;\le\; d,
    \]
    where $L_n := r_n^{1/d}\delta_n$.  
    In particular, $L_n \to 0$ as $n\to\infty$, and therefore the right-hand side of
    \eqref{eq:moran-lower-bound} in Lemma~\ref{lem:moran-lb-nd} is well defined
    (for all sufficiently large $n$ one has $0 < L_n < 1$ and $-\log L_n > 0$).
    \end{lemma}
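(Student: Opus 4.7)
The plan is to convert the sibling separation at level $n$ into a volume lower bound on each parent cube at level $n-1$, and then to use a volume-packing comparison inside $[0,1]^d$ to bound $r_1\cdots r_{n-1}$ from above by a constant multiple of $L_n^{-d}$.

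First I would fix $n\ge 2$ and an arbitrary parent cube $P\in\mathcal{Q}_{n-1}$. By the branching condition $P$ contains at least $r_n\ge 2$ children, and by the separation condition inside a parent, any two of these children are at distance at least $\delta_n$. Picking one point from each child yields a finite set $S\subset P$ with $\#S\ge r_n\ge 2$ and pairwise separation $\ge \delta_n$. Applying the second conclusion of Lemma~\ref{lem:packing} to $A=P$ gives
\[
\diam(P)\ \ge\ c_d\,r_n^{1/d}\delta_n\ =\ c_d L_n.
\]
Since $P$ is a cube, its side length is $\diam(P)/\sqrt d$, so
\[
\vol(P)\ \ge\ \Bigl(\frac{c_d}{\sqrt d}\Bigr)^{d} L_n^{d}.
\]

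Next I would use that the cubes in $\mathcal{Q}_{n-1}$ are pairwise disjoint closed cubes inside $[0,1]^d$, which by volume comparison gives
\[
|\mathcal{Q}_{n-1}|\cdot\Bigl(\frac{c_d}{\sqrt d}\Bigr)^{d} L_n^{d}\ \le\ 1.
\]
Iterating the branching condition from the root shows $|\mathcal{Q}_{n-1}|\ge r_1\cdots r_{n-1}\ge 2^{n-1}$, where the last bound uses $r_k\ge 2$ for all $k$. Writing $C_d:=(\sqrt d/c_d)^d$, we obtain simultaneously
\[
r_1\cdots r_{n-1}\ \le\ C_d\,L_n^{-d}\qquad\text{and}\qquad L_n^{d}\ \le\ C_d\cdot 2^{-(n-1)}.
\]
The second bound forces $L_n\to 0$, hence $-\log L_n\to\infty$, so for all sufficiently large $n$ one has $0<L_n<1$ and the ratio defining the $\liminf$ is well-defined.

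Finally, I would take logarithms in $r_1\cdots r_{n-1}\le C_d L_n^{-d}$ and divide by $-\log L_n>0$ to obtain
\[
\frac{\log(r_1\cdots r_{n-1})}{-\log L_n}\ \le\ d\ +\ \frac{\log C_d}{-\log L_n},
\]
which yields $\liminf_{n\to\infty}\log(r_1\cdots r_{n-1})/(-\log L_n)\le d$ upon passing to the limit. The only mildly delicate step is the packing-to-volume conversion for a cube (which merely costs a $d$-dependent constant via the $\diam=\sqrt d\cdot\text{side}$ identity); everything else is bookkeeping with the defining properties of a Moran fractal and the hypothesis $r_k\ge 2$.
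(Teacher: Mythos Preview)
Your proof is correct and arrives at the same decisive inequality $r_1\cdots r_{n-1}\le C_d\,L_n^{-d}$ as the paper, after which the two arguments coincide. The route to that inequality, however, is a little different. The paper first proves that \emph{all} level-$n$ cubes are mutually $\delta_n$-separated (not just siblings), via a least-common-ancestor argument that uses the monotonicity of $(\delta_k)$; it then applies Lemma~\ref{lem:packing} globally in $[0,1]^d$ to obtain $N_n\le C\,\delta_n^{-d}$, whence $r_1\cdots r_{n-1}\le N_n/r_n\le C/(r_n\delta_n^d)=C\,L_n^{-d}$. You instead apply Lemma~\ref{lem:packing} \emph{inside a single parent} to bound its diameter from below, convert this to a volume lower bound, and then invoke volume disjointness of the level-$(n-1)$ cubes. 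Your path is marginally more economical in that it uses only the sibling-separation hypothesis exactly as stated in Definition~\ref{def:moran-nd}, avoiding the global separation claim; the price is the harmless $\diam=\sqrt d\cdot(\text{side})$ conversion.
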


    \begin{proof}
    Write $N_n:=\#\mathcal{E}_n$. Since each $(k-1)$-cube has at least $r_k$ children,
    \[
    N_n\ge r_1\cdots r_n\qquad(n\ge1).
    \]
    
    We claim that $\mathcal{E}_n$ is $\delta_n$--separated. Take $Q\neq Q'\in\mathcal{E}_n$.
    For $0\le k\le n$ let $Q^{(k)}$ and $(Q')^{(k)}$ be the unique $k$-level ancestors.
  Let $m$ be the least index with $Q^{(m)}\neq (Q')^{(m)}$. Then
  $Q^{(m-1)}=(Q')^{(m-1)}$, so $Q^{(m)}$ and $(Q')^{(m)}$ are two distinct
  children of the same parent. Hence
  \[
  \dist\bigl(Q^{(m)},(Q')^{(m)}\bigr)\ge \delta_m.
  \]
  Since $Q\subset Q^{(m)}$ and $Q'\subset (Q')^{(m)}$, we have
  \[
  \dist(Q,Q')\ge \dist\bigl(Q^{(m)},(Q')^{(m)}\bigr)\ge \delta_m.
  \]
  Also $m\le n$ and $(\delta_k)$ is decreasing, so $\delta_m\ge \delta_n$.
  Therefore $\dist(Q,Q')\ge \delta_n$.

    Pick $x_Q\in Q$ for each $Q\in\mathcal{E}_n$. Then $\{x_Q\}$ is $\delta_n$--separated
    in $[0,1]^d$. By Lemma~\ref{lem:packing},
    \[
    N_n\le C\,\delta_n^{-d}
    \]
    for some $C>0$ depending only on $d$. Thus
    \[
    r_1\cdots r_{n-1}\le \frac{C}{r_n\delta_n^d}=\frac{C}{L_n^d}.
    \]
    If $r_k\ge2$ for all $k$, then $r_1\cdots r_{n-1}\ge 2^{n-1}$, so
    $L_n^d\le C2^{-(n-1)}$ and $L_n\to0$. Hence $-\log L_n\to\infty$ and, for large $n$,
    \[
    \frac{\log(r_1\cdots r_{n-1})}{-\log L_n}
    \le d+\frac{\log C}{-\log L_n}.
    \]
    Therefore
    \[
    \liminf_{n\to\infty}\frac{\log(r_1\cdots r_{n-1})}{-\log L_n}\le d.
    \]
    \end{proof}
    \begin{lemma}[Mass distribution principle]\label{lem:mdp}
    Let $F\subset \R^d$ be a Borel set. Assume there exist $s\ge0$, constants $C>0$ and
    $r_0>0$, and a Borel probability measure $\mu$ supported on $F$ such that
    \[
    \mu\bigl(B(x,r)\bigr)\le C\,r^s
    \qquad\text{for all }x\in\R^d,\ 0<r\le r_0.
    \]
    Then $\dim_H(F)\ge s$.
    \end{lemma}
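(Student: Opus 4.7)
The plan is to argue directly from the definition of Hausdorff measure, showing that the mass bound forces $\mathcal H^s(F) > 0$, which immediately yields $\dim_H(F) \ge s$. This is the classical Frostman-style argument, so no surprises are expected; the content is essentially bookkeeping once the right cover is considered.

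First I would fix $\delta \in (0, r_0]$ and let $\{U_i\}_{i \ge 1}$ be an arbitrary countable cover of $F$ with $\diam(U_i) \le \delta$ for all $i$. Without loss of generality I may assume every $U_i$ meets $F$ (otherwise discard it without changing the cover of $F$). For each such $U_i$, pick a point $x_i \in U_i \cap F$ and observe that $U_i \subset B(x_i, \diam(U_i))$. Then by the hypothesis,
\[
\mu(U_i) \;\le\; \mu\bigl(B(x_i, \diam(U_i))\bigr) \;\le\; C \, \diam(U_i)^s,
\]
where the mass-bound applies because $\diam(U_i) \le \delta \le r_0$.

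Next I would sum these estimates. Since $\mu$ is a probability measure supported on $F$,
\[
1 \;=\; \mu(F) \;\le\; \sum_{i \ge 1} \mu(U_i) \;\le\; C \sum_{i \ge 1} \diam(U_i)^s .
\]
Taking the infimum over all admissible covers gives $\mathcal H^s_\delta(F) \ge 1/C$ for every $\delta \in (0, r_0]$, and letting $\delta \downarrow 0$ yields $\mathcal H^s(F) \ge 1/C > 0$. By the definition of Hausdorff dimension, $\dim_H(F) \ge s$.

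The only minor subtlety, and the closest thing to an obstacle, is the case where some $U_i$ is disjoint from $F$ (so no choice of $x_i \in U_i \cap F$ exists) or where $\diam(U_i) = 0$; both are handled by the trivial reduction at the start (discard sets disjoint from $F$; singletons contribute $0$ to $\sum \diam(U_i)^s$ when $s > 0$ and are harmless). The case $s = 0$ is vacuous since $\dim_H(F) \ge 0$ always. Otherwise the proof is entirely routine and requires no further ingredients beyond the definition of $\mathcal H^s$.
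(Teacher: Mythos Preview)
Your proof is correct and is precisely the standard Frostman-type argument. The paper does not give its own proof of this lemma at all; it simply cites \cite[Proposition~4.2]{Falconer2014}, which contains exactly the argument you have written out.
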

    
    \begin{proof}
    This is the standard mass distribution principle; see \cite[Proposition~4.2]{Falconer2014}.
    \end{proof}

    Now we give Proof of Lemma~\ref{lem:moran-lb-nd}:

   \begin{proof}
Write $\mathcal{E}_n$ for the family of $n$-level cubes, and set $R_n:=r_1\cdots r_n$.

If some parent has more than $r_n$ children, choose and fix exactly $r_n$ children
inside it. Do this for all $n$. This gives a Moran subset $F'\subset F$ with the
same parameters $(r_n),(\delta_n)$. Then $\dim_H F\ge \dim_H F'$. So we may assume
each parent has exactly $r_n$ children. Hence $\#\mathcal{E}_n=R_n$.

Define a probability measure $\mu$ on $F$ by
\[
\mu(Q)=R_n^{-1}\qquad (Q\in\mathcal{E}_n).
\]
This is consistent by nesting, so $\mu$ is well defined and supported on $F$.

Fix $0<r<\delta_1/2$ and set $U:=B(x,r)$. Then $\diam(U)=2r$.
Choose $n\ge1$ such that $\delta_n\le 2r<\delta_{n-1}$.
Let $N(U)$ be the number of cubes in $\mathcal{E}_n$ that meet $U$.

We first note that $\mathcal{E}_{n-1}$ is $\delta_{n-1}$--separated.
Indeed, take distinct $I,I'\in\mathcal{E}_{n-1}$ and let $m$ be the first level
where their ancestors split. Then their $m$-level ancestors are siblings, so their
distance is at least $\delta_m\ge \delta_{n-1}$ since $(\delta_k)$ decreases.
Thus $\dist(I,I')\ge \delta_{n-1}$.

Hence $U$ meets at most one $(n\!-\!1)$-level cube (since $\diam(U)<\delta_{n-1}$).
So all $n$-level cubes meeting $U$ lie in one parent, and therefore $N(U)\le r_n$.

Also pick one point $x_Q\in Q\cap U$ for each such cube $Q$.
Then the set $\{x_Q\}$ is $\delta_n$--separated in $U$, so by Lemma~\ref{lem:packing},
\[
N(U)\le C_d\Bigl(\frac{\diam(U)}{\delta_n}+1\Bigr)^d
\le C\,\Bigl(\frac{r}{\delta_n}\Bigr)^d
\]
for a constant $C$ depending only on $d$ (since $\delta_n\le 2r$).

Thus
\[
N(U)\le \min\Bigl\{r_n,\; C\Bigl(\frac{r}{\delta_n}\Bigr)^d\Bigr\}.
\]
Fix $s\in(0,d]$ and put $\theta:=s/d\in(0,1]$. Then
$\min\{A,B\}\le A^{1-\theta}B^\theta$, so
\[
N(U)\le r_n^{1-s/d}\cdot \Bigl(C\Bigl(\frac{r}{\delta_n}\Bigr)^d\Bigr)^{s/d}
= C^{s/d}\, r^s\, r_n^{1-s/d}\,\delta_n^{-s}.
\]
Since each $n$-cube has $\mu$-mass $R_n^{-1}$, we get
\[
\mu(U)\le R_n^{-1}N(U)
\le C^{s/d}\, r^s\cdot R_n^{-1} r_n^{1-s/d}\delta_n^{-s}
= C^{s/d}\, r^s\cdot \frac{1}{R_{n-1}(r_n^{1/d}\delta_n)^s}.
\]

Let
\[
\alpha:=\liminf_{n\to\infty}\frac{\log R_{n-1}}{-\log(r_n^{1/d}\delta_n)}.
\]
Take any $s<\alpha$ with $s\le d$. Then for all large $n$,
\[
R_{n-1}(r_n^{1/d}\delta_n)^s \ge 1.
\]
For such $r$ (hence such $n$) we have $\mu(B(x,r))\le C' r^s$, with $C'$ depending
only on $d$ and $s$. By Lemma~\ref{lem:mdp}, $\dim_H(F)\ge s$.

Since this holds for all $s<\alpha$ with $s\le d$, we obtain
\[
\dim_H(F)\ge \min\{d,\alpha\}.
\]
Finally, Lemma~\ref{lem:s-at-most-d} gives $\alpha\le d$, so $\min\{d,\alpha\}=\alpha$,
which is exactly \eqref{eq:moran-lower-bound}.
\end{proof}

\section{Three-stage construction in $\N^d$}\label{sec:three-stage}

In this section we construct Moran--type seed sets in $\mathbb N^d$ and derive
lower bounds for their Hausdorff dimension in terms of the polynomial density
exponent of the underlying digit set.
The construction is intentionally flexible: no divergence or anisotropy
assumptions are imposed at this stage, and such conditions will be enforced
later only when required by the transference arguments.

The argument proceeds in three steps: we first extract a relatively thin subset
$S^\ast\subset S$ with controlled growth, then build an extreme seed Moran
fractal from $S^\ast$, and finally insert digits from a target set along
carefully chosen indices while preserving Hausdorff dimension via an
almost--Lipschitz elimination map.

\subsection{Construction of a relatively thin subset}
\label{subsec:step1}

We define a slowly increasing function $\nu: [1, \infty) \to \N$ by the factorial blocks:
\begin{equation}
	\nu(\xi) = k \quad \text{for } \xi \in [k!, (k+1)!), \quad k \in \N.
\end{equation}

\begin{lemma}[\cite{Nakajima2025}, Lemma 3.3]\label{lem:nu_bound}
For every $t > e$, there exists a $\lambda > 1$, depending on $t$, such that
\[
\nu(t^n) \leq (n \log t)^\lambda \quad \text{for all } n \in \mathbb{N}.
\]
\end{lemma}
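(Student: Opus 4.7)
The plan is to unwind the definition of $\nu$ and reduce the lemma to a concrete estimate on the inverse factorial. By definition, $\nu(t^n) = k$ exactly when $k! \leq t^n < (k+1)!$, so it suffices to bound the unique integer $k = k(n,t)$ satisfying $k! \leq t^n$ by $(n \log t)^\lambda$ for a suitable $\lambda > 1$ depending on $t$.

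The main estimate follows from a Stirling-type lower bound. Using monotonicity of $\log$,
$$\log(k!) \;=\; \sum_{j=1}^k \log j \;\ge\; \int_1^k \log x\,dx \;=\; k\log k - k + 1,$$
so $k! \leq t^n$ forces $k(\log k - 1) \leq n\log t$. Once $k \geq e^2$, i.e., $k \geq 8$, we have $\log k - 1 \geq 1$, and thus
$$\nu(t^n) \;\le\; n \log t$$
whenever $\nu(t^n) \geq 8$. In particular, for all $n$ large enough that $\nu(t^n) \geq 8$, the desired inequality already holds with the trivial exponent $\lambda = 1$.

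It remains to cover the finitely many ``small'' indices $n$ for which $\nu(t^n) \leq 7$. On this range, $\nu(t^n)$ is bounded by the absolute constant $7$, while $n \log t \geq \log t > 1$ by the hypothesis $t > e$. Choosing
$$\lambda \;:=\; \max\!\left(2,\ 1 + \frac{\log 7}{\log\log t}\right),$$
one obtains $(n \log t)^\lambda \geq (\log t)^\lambda \geq 7 \geq \nu(t^n)$ on this initial range. Combined with the estimate of the previous paragraph, this yields the claim for every $n \in \mathbb{N}$.

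The only delicate point, and the reason the statement permits $\lambda$ to depend on $t$, is that $\log \log t$ can be made arbitrarily small as $t$ approaches $e$ from above, forcing $\lambda$ to blow up in that regime. Aside from this mild dependence, the entire argument reduces to routine manipulations with the factorial via the integral comparison for $\log(k!)$, and no further input is needed.
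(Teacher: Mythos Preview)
Your proof is correct. The paper does not supply its own proof of this lemma but simply cites it from \cite{Nakajima2025}, so there is nothing to compare against; your Stirling-type argument via the integral bound $\log(k!)\ge k\log k-k+1$ is a clean and self-contained derivation, and your handling of the finitely many small-$k$ cases by inflating $\lambda$ is exactly what the hypothesis $t>e$ (hence $\log\log t>0$) is there to permit.
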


We first show that from any infinite subset of $\N^d$ we can extract a subset of zero relative density with a slowly varying growth rate.

\begin{lemma}[Extraction of a thin subset in $\mathbb{N}^d$]\label{lem:thin-subset-Zd}
Let $S\subset\mathbb{N}^d$ be infinite. Then there exists $S^\ast\subset S$ such that:
\begin{itemize}[leftmargin=2em]
\item[(i)] $S^\ast$ has no ``nearest neighbour'' pairs in the sense that whenever $\mathbf{v},\mathbf{w}\in S^\ast$ with $\mathbf{v}\neq \mathbf{w}$, one has $\|\mathbf{v}-\mathbf{w}\|_\infty\ge2$;
\item[(ii)] there exist constants $C_1, C_2>0$ (depending only on $d$) such that for all large $N$,
\[
\frac{C_1}{\nu(\#(S\cap Q_N))}
\le
\frac{\#(S^\ast\cap Q_N)}{\#(S\cap Q_N)}
\le
\frac{C_2}{\nu(\#(S\cap Q_N))};
\]
\item[(iii)] in particular $\dbar(S^\ast\mid S)=0$.
\end{itemize}
\end{lemma}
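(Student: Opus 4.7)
The plan is to adapt the one-dimensional thinning of \cite[Lemma~3.3]{Nakajima2025} to $\N^d$ by combining it with a greedy filter that enforces the $L^\infty$-separation condition (i). First, I would enumerate $S=\{\mathbf v_k\}_{k\ge 1}$ in non-decreasing order of $\|\mathbf v\|_\infty$, breaking ties arbitrarily. Writing $M_N := |S \cap Q_N|$, this enumeration makes the indices compatible with the box filtration in the strong sense $\{k:\mathbf v_k\in Q_N\}=\{1,\ldots,M_N\}$.

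Next, I would introduce a preliminary index set $T\subset\N$ via a one-dimensional thinning rule, for instance $T:=\{k\in\N:\nu(k)\mid k\}$. Using the factorial-block structure of $\nu$ (together with Lemma~\ref{lem:nu_bound}), a direct count gives $|T\cap[1,K]|\asymp K/\nu(K)$ uniformly in $K$. I would then construct $S^*$ by processing indices $k\in T$ in increasing order: admit $\mathbf v_k$ into $S^*$ iff $\|\mathbf v_k-\mathbf v\|_\infty\ge 2$ for every $\mathbf v$ already admitted. Property (i) is immediate from this greedy rule.

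For (ii), the upper bound $|S^*\cap Q_N|\le |T\cap[1,M_N]|\asymp M_N/\nu(M_N)$ is immediate. For the lower bound I would use a double-counting argument: a candidate $\mathbf v_k$ ($k\in T$) is rejected iff some already-admitted $\mathbf v_n$ ($n<k$) lies in the $L^\infty$-unit ball of $\mathbf v_k$. Since that ball contains at most $3^d-1$ other lattice points, each admitted $\mathbf v_n\in S^*\cap Q_N$ can account for at most $3^d-1$ rejections among indices $k\in T$ with $k>n$. Summing gives
\[
|T\cap[1,M_N]| \;\le\; |S^*\cap Q_N|+(3^d-1)\,|S^*\cap Q_N| \;=\; 3^d\,|S^*\cap Q_N|,
\]
hence $|S^*\cap Q_N|\ge |T\cap[1,M_N]|/3^d \asymp M_N/\nu(M_N)$. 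This yields (ii) with constants depending only on $d$; statement (iii) is immediate from (ii) since $\nu(M_N)\to\infty$.

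The main technical obstacle is the feedback loop in the lower bound of (ii): the number of greedy rejections is controlled by $|S^*|$ itself rather than by $|T|$. This feedback is tamed by the local nature of $L^\infty$-separation, which ensures each admitted point blocks only a dimensional constant ($\le 3^d-1$) of future candidates, so the inequality closes on itself. Verifying that the 1D thinning rule survives the greedy pruning up to this dimensional factor, without disrupting the $M_N/\nu(M_N)$ asymptotic, is the main bookkeeping point; the factorial-block structure of $\nu$ (and the resulting identity $\sum_{k\le K}1/\nu(k)\asymp K/\nu(K)$) is precisely what makes this robust.
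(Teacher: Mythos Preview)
Your argument is correct and follows the same two-ingredient strategy as the paper: a greedy $2$-separation step (costing a factor $3^d$) combined with a one-dimensional factorial-block thinning of indices. The paper, however, applies these in the \emph{opposite order}: it first passes to a $2$-separated subset $S_{\mathrm{sep}}\subset S$ with $N_{S_{\mathrm{sep}}}(R)\asymp_d N_S(R)$, then enumerates $S_{\mathrm{sep}}$ and selects indices from a set $Q\subset\N$ with $|Q\cap[1,\xi]|\asymp \xi/\nu(\xi)$. This order forces an extra step at the end, namely checking that $\nu(N_{S_{\mathrm{sep}}}(R))$ and $\nu(N_S(R))$ differ by at most~$1$ (via the slow growth of $\nu$). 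Your order---thin first, then separate---sidesteps this comparison entirely, since the thinning is applied directly to the enumeration of $S$ and the greedy loss is absorbed afterwards by your double-counting inequality $|T\cap[1,M_N]|\le 3^d\,|S^\ast\cap Q_N|$. Both routes yield the same constants up to dimensional factors; yours is arguably a little cleaner.
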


\begin{remark}
The selection is essentially one--dimensional. We first pass to a $2$--separated
subset $S_{\mathrm{sep}}\subset S$ (in $\|\cdot\|_\infty$); this is used later to
ensure separation of cylinders in the Moran construction (hence diameter and gap
lower bounds). We then enumerate $S_{\mathrm{sep}}$ by nondecreasing $\|\cdot\|_\infty$
and apply the same factorial--block thinning rule as in the one--dimensional case.
The estimates depend only on the counting function $N_{S_{\mathrm{sep}}}(R)$ and
not on the particular enumeration.
\end{remark}

\begin{proof}
Let us define the counting function $N_S(R) = \#\{\mathbf{v} \in S : \|\mathbf{v}\|_\infty \leq R\}$ for $R > 0$, and similarly for other subsets.

We first construct a subset $S_{\text{sep}} \subset S$ that satisfies the separation condition. We define a greedy selection process: order the elements of $S$ by their $\ell_\infty$ norm: $S = \{\mathbf{x}_1, \mathbf{x}_2, \dots\}$.
\begin{itemize}
	\item Select $\mathbf{x}_1$.
	\item Recursively, select the next $\mathbf{x}_j$ only if $\|\mathbf{x}_j - \mathbf{x}_i\|_\infty \ge 2$ for all previously selected $\mathbf{x}_i$.
\end{itemize}
Let the resulting set be $S_{\text{sep}}$. Since the number of integer points in an $\ell_\infty$-ball of radius 2 is bounded by $K_d = 3^d$, at most $K_d$ points from $S$ are discarded for every point selected. Thus, the density of $S_{\text{sep}}$ is comparable to $S$:
\begin{equation} \label{eq:sep_density}
	\frac{1}{K_d} N_S(R) \le N_{S_{\text{sep}}}(R) \le N_S(R).
\end{equation}

We now linearize $S_{\text{sep}}$ by enumerating its elements in increasing order of their $\ell_\infty$ norm:
\[
S_{\text{sep}} = \{\mathbf{u}_1, \mathbf{u}_2, \mathbf{u}_3, \dots\}.
\]
Define the index set
\[
Q = \bigcup_{k=2}^\infty \{ k! + ik : i = 0, 1, \dots, k!-1 \}
\]
and construct the final set $S^\ast$ by selecting vectors from $S_{\text{sep}}$ with indices in $Q$:
\[
S^\ast = \{ \mathbf{u}_n \in S_{\text{sep}} : n \in Q \}.
\]

\noindent\textbf{Verification of properties:}

(i) Since $S^\ast \subset S_{\text{sep}}$, and $S_{\text{sep}}$ is 2-separated by construction, $S^\ast$ satisfies $\|\mathbf{u} - \mathbf{v}\|_\infty \ge 2$ for any distinct $\mathbf{u}, \mathbf{v} \in S^\ast$.

(ii) Let $\xi = N_{S_{\text{sep}}}(R)$ be the number of elements in $S_{\text{sep}}$ up to norm $R$. The number of elements in $S^\ast$ up to norm $R$ is exactly $\#(Q \cap [1, \xi])$. When $\xi \in [k!, (k+1)!)$, we estimate the size of $Q \cap [1, \xi]$ as follows:

Note that by the construction of $Q$, in each interval $[j!, (j+1)!)$  from $j=2$ to $j=k-1$, the set $Q$   containins exactly $j!$ elements.  We also note,  $Q \cap [k!, (k+1)!)$ consists of  a partial block   containing $\left\lfloor \frac{\xi - k!}{k} \right\rfloor + 1$ elements.
We decompose $[1, \xi]$ into three parts:
\begin{align*}
\#(Q \cap [1, \xi]) &= \#(Q \cap [1, (k-1)!)) + \#(Q \cap [(k-1)!, k!)) + \#(Q \cap [k!, \xi]) \\
&\leq \#(Q \cap [1, (k-1)!)) + \frac{\#([(k-1)!, k!) \cap \mathbb{N})}{k-1} + \frac{\#([k!, \xi] \cap \mathbb{N})}{k} \\
&\leq (k-1)! + (k-1)! + \frac{\#([k!, \xi] \cap \mathbb{N})}{k} \leq \frac{3\xi}{\nu(\xi)},
\end{align*}
where we used that $(k-1)! \leq \xi/k$ for sufficiently large $k$ (since $\xi \geq k!$ and $k!(k) = (k+1)! - k! > k \cdot k!$ for $k \geq 2$).

For the lower bound, we have:
\[
\begin{aligned}
\#(Q \cap[1, \xi]) & \geq \sum_{i=2}^{k-1} \frac{\#([i!,(i+1)!) \cap \mathbb{N})}{i}+\frac{\#([k!, \xi] \cap \mathbb{N})}{k}-1 \\
& \geq \sum_{i=2}^{k-1} \frac{\#([i!,(i+1)!) \cap \mathbb{N})}{k}+\frac{\#([k!, \xi] \cap \mathbb{N})}{k}-1 \\
& =\frac{\#([2, \xi] \cap \mathbb{N})}{k}-1 \geq \frac{\xi}{2 \nu(\xi)},
\end{aligned}
\]
where the last inequality holds for sufficiently large $\xi$ since $\#([2, \xi] \cap \mathbb{N}) = \lfloor \xi \rfloor - 1 \geq \xi - 2$ and $\frac{\xi - 2}{k} - 1 \geq \frac{\xi}{2k}$ for $\xi \geq 2k + 4$, which is satisfied for sufficiently large $\xi$ given that $\xi \geq k!$ and $k!$ grows faster than $2k+4$.

Substituting $k = \nu(\xi) = \nu(N_{S_{\text{sep}}}(R))$ yields:
\[
\frac{N_{S_{\text{sep}}}(R)}{2\nu(N_{S_{\text{sep}}}(R))} \le N_{S^\ast}(R) \le \frac{3 N_{S_{\text{sep}}}(R)}{\nu(N_{S_{\text{sep}}}(R))}.
\]
Dividing by $N_S(R)$ and using the bounds from \eqref{eq:sep_density}, we obtain:
\[
\frac{1}{2K_d \cdot \nu(N_{S_{\text{sep}}}(R))} \le \frac{N_{S^\ast}(R)}{N_S(R)} \le \frac{3}{\nu(N_{S_{\text{sep}}}(R))}.
\]
Since $\nu$ is constant on intervals $[k!, (k+1)!)$ and $N_{S_{\text{sep}}}(R)$ differs from $N_S(R)$ by at most a constant factor $K_d$, for sufficiently large $R$ we have $\nu(N_S(R))-1 \leq   \nu(N_{S_{\text{sep}}}(R)) \leq \nu(N_S(R))$. Thus, with $C_1 = 1/(2K_d)$ and $C_2 = 4$, we obtain the desired bounds.

(iii) The upper bound in (ii) implies that $\frac{\#(S^\ast\cap Q_N)}{\#(S\cap Q_N)} \to 0$ as $N \to \infty$, hence $\dbar(S^\ast\mid S)=0$.
\end{proof}

This construction ensures all required properties while maintaining the necessary density controls through careful block partitioning and residue class selection.

\subsection{Extreme seed sets and their dimension}
\label{subsec:step2}

We begin by isolating the ambient ``no--repetition'' sets in which our seed (root)
constructions will take place.  For $S\subset\N^d$ define
\[
E_{S}^{\mathrm{vec}}
:=
\Bigl\{\mathbf{x}\in((0,1)\setminus\Q)^d:
\mathbf a_n(\mathbf x)\in S\ \text{for all }n\ge1,
\ \text{and }\mathbf a_m(\mathbf x)\neq \mathbf a_n(\mathbf x)\ \text{for all }m\neq n
\Bigr\},
\]
where $\mathbf a_n(\mathbf x):=(a_n(x_1),\dots,a_n(x_d))\in\N^d$ denotes the $n$--th
digit vector of $\mathbf x$.

For parameters $t>L>1$ with $Lt<t^2$, any $\mathbf x\in R_{t,L}(K)$ satisfies
\[
t^n\le \|\mathbf a_n(\mathbf x)\|_\infty< Lt^n \qquad(n\ge1).
\]
Since the intervals $[t^n,Lt^n)$ are pairwise disjoint (equivalently $Lt^n<t^{n+1}$),
we have $\mathbf a_m(\mathbf x)\neq \mathbf a_n(\mathbf x)$ whenever $m\neq n$.
In particular, if $K\subset S$, then
\[
R_{t,L}(K)\subset E_{S}^{\mathrm{vec}}.
\]

We also use the divergence subspace
\[
\mathcal E_{\N^d}
:=
\Bigl\{\mathbf{x}\in((0,1)\setminus\Q)^d:
a_n(x_j)\to\infty\ \text{as }n\to\infty\ \text{for each }1\le j\le d
\Bigr\},
\]
and recall the digit--restricted divergence set
\[
\mathcal E_{S}^{\mathrm{vec}}
=
E_{S}^{\mathrm{vec}}\cap \mathcal E_{\N^d}.
\]

Finally, if $S\subset\mathcal C_K$ is uniformly $K$--balanced, then for any
$\mathbf a_n(\mathbf x)\in S$ we have
$\min_j a_n(x_j)\ge K^{-1}\|\mathbf a_n(\mathbf x)\|_\infty$.
Hence $\|\mathbf a_n(\mathbf x)\|_\infty\to\infty$ implies $\mathbf x\in\mathcal E_{\N^d}$,
so in the balanced setting we may regard $\mathcal E_{S}^{\mathrm{vec}}$ as the
natural ambient space for the seed constructions.

\begin{remark}\label{rem:seed-ambient}
In what follows we construct seed sets inside $E_{\N^d}^{\mathrm{vec}}$.
Later, when restricting digits to a subset $S\subset\mathcal C_K$, we may further
intersect with $\cE_{\N^d}$ (or with $\cE_S$) in order to enforce divergence of digits.
We do not impose additional coordinatewise restrictions beyond what is required.
In \cite{Nakajima2025}, the fractal transference principle is formulated on $\cE$ for $d=1$;
for consistency with that framework, we will also work on $\cE_{\N^d}$ in the final arguments.
\end{remark}

Given the set $S^\ast$ obtained in Lemma~\ref{lem:thin-subset-Zd}, we now construct
Moran-type subsets of $E_{\N^d}^{\mathrm{vec}}$ whose digit vectors lie in $S^\ast$ and
grow along a prescribed scale. These sets will serve as \emph{extreme seed sets},
achieving the maximal Hausdorff dimension permitted by the growth properties of $S$.

\begin{definition}[Seed sets associated with $S$]\label{def:seed_sets}
Fix parameters $t>L>1$, with $t$ sufficiently large.
For an infinite subset $K\subset S $ we define
\[
R_{t,L}(K)
:=
\Bigl\{
\mathbf{x}\in E_{\N^d}^{\mathrm{vec}}:
\mathbf a_n(\mathbf x)\in
K\cap\bigl\{\mathbf v\in\N^d:\ t^n\le \|\mathbf v\|_\infty< L t^n\bigr\}
\ \text{for all }n\ge1
\Bigr\}.
\]
Non-empty sets of this form are called \emph{seed sets}.
\end{definition}

\begin{definition}[Extreme seed sets]\label{def:extreme-seed}
Let $S\subset\N^d$ be infinite and let $R_{t,L}(K)$ be a seed set constructed from
some $K\subset S$.
We say that $R_{t,L}(K)$ is an \emph{extreme seed set associated with $S$} if the
following conditions hold:
\begin{itemize}[leftmargin=2em]
  \item[(A1)] $S$ has polynomial density of exponent $\alpha\ge1$;
  \item[(A2)] $K$ contains no nearest neighbour pairs;
  \item[(A3)] $\dbar(K\mid S)=0$;
  \item[(A4)] $\dim_H R_{t,L}(K)\ge d/(2\alpha)$.

\end{itemize}
\end{definition}

 The next proposition is a multidimensional adaptation of the extreme--seed
construction in the one--dimensional setting of \cite{Nakajima2025}.

\begin{proposition}[Existence of extreme seed sets]\label{prop:extreme-seed}
Let $S\subset\N^d$ have polynomial density of exponent $\alpha\ge1$.
Then there exist a subset $S^\ast\subset S$ and parameters $t>L>1$, with $t$
sufficiently large, such that $R_{t,L}(S^\ast)$ is an extreme seed set associated
with $S$. Moreover, $S^\ast$ can be chosen \emph{relatively sparse} in $S$, in the
sense that
\begin{align}\label{eq:Sstar-zero-relative-density}
  \dbar(S^\ast\mid S)=0.
\end{align}

If in addition $S\subset\mathcal C_K$ for some $K\ge1$, then the lower bound in
\textup{(A4)} is sharp and in fact
\begin{align}\label{eq:dim-seed-balanced}
  \dim_H R_{t,L}(S^\ast)=\frac{d}{2\alpha}.
\end{align}

In this balanced case, one may also redefine the seed set by restricting the ambient
space to $\cE_{\N^d}$, that is, by replacing $E_{\N^d}^{\mathrm{vec}}$ with $\cE_{\N^d}$ in the
definition of $R_{t,L}(K)$, without affecting any of the above conclusions.
\end{proposition}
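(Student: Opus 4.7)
The plan is to take $S^\ast\subset S$ from Lemma~\ref{lem:thin-subset-Zd}, then choose shell parameters $(t,L)$ so that the seed set $R_{t,L}(S^\ast)$ admits a Moran construction whose branching and separation rates combine to give dimension $d/(2\alpha)$. The thinning lemma already delivers properties (A2) (two--separation) and (A3) (zero relative density), and simultaneously verifies \eqref{eq:Sstar-zero-relative-density}. What remains is (A4), together with the matching upper bound and the ambient--space redefinition in the balanced case.

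First I would fix $t>L>1$ with $t$ large enough that Lemma~\ref{lem:nu_bound} applies. Polynomial density of $S$ together with Lemma~\ref{lem:ratio-asymptotic} gives a shell count in $S$ of order $(L^{d/\alpha}-1)\,t^{nd/\alpha}(n\log t)^{-\beta}$, and thinning by the factor $\nu\le (n\log t)^{\lambda}$ supplied by Lemmas~\ref{lem:nu_bound}--\ref{lem:thin-subset-Zd} yields
\[
r_n:=\bigl|S^\ast\cap\{\mathbf v\in\N^d:\ \|\mathbf v\|_\infty\in[t^n,Lt^n)\}\bigr|
\ \gtrsim\ \frac{t^{nd/\alpha}}{(n\log t)^{\beta+\lambda}}.
\]
For $L$ chosen sufficiently larger than $1$, this is strictly positive for all large $n$, so $R_{t,L}(S^\ast)$ is nonempty.

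Next I would build a Moran fractal inside $R_{t,L}(S^\ast)$. At level $n$, in each parent cylinder, for every admissible $\mathbf a_n\in S^\ast\cap\mathrm{shell}_n$ keep the child cylinder $I(\mathbf a_1,\dots,\mathbf a_n)$ and inscribe inside it a closed cube of side length $\min_j|I^{(j)}_\omega|$. Property (A2) together with Lemma~\ref{lem:sibling_cylinder_separation} forces sibling separation $\delta_n\gtrsim\min_j|I^{(j)}_\omega|$, which, once we are in a balanced regime, has order $\prod_{i\le n}t^{-2i}=t^{-n(n+1)}$ by Proposition~\ref{prop:cylinder_estimate}. Feeding $(r_n)$ and $(\delta_n)$ into Lemma~\ref{lem:moran-lb-nd}, the dominant asymptotics
\[
\log(r_1\cdots r_{n-1})\sim \frac{d}{2\alpha}\,n^2\log t,\qquad -\log(r_n^{1/d}\delta_n)\sim n^2\log t
\]
give $\dim_H R_{t,L}(S^\ast)\ge d/(2\alpha)$, which is (A4).

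In the balanced case $S\subset\mathcal C_K$, the same $S^\ast$ lies in $\mathcal C_K$ and satisfies $|S^\ast\cap Q_N|\ll N^{d/\alpha}$, so Corollary~\ref{cor:growth-balanced} supplies the matching upper bound $\dim_H\mathcal E_{S^\ast}\le d/(2\alpha)$, giving \eqref{eq:dim-seed-balanced}. Moreover the shell condition forces $\min_j a_n^{(j)}\ge t^n/K\to\infty$, so every $\mathbf x\in R_{t,L}(S^\ast)$ automatically lies in $\mathcal E_{\N^d}$; this justifies the claimed replacement of $E_{\N^d}^{\mathrm{vec}}$ by $\mathcal E_{\N^d}$ in the definition of $R_{t,L}(S^\ast)$. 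The principal technical obstacle is the sibling--separation estimate in the non--balanced regime: without the $\mathcal C_K$ hypothesis, $\min_j|I^{(j)}_\omega|$ can drop well below the generic $t^{-n(n+1)}$ and weaken the Moran bound. The natural remedy is to fold a preliminary balancing into the thinning step by working from the outset with $S\cap\mathcal C_{K_0}$ for some large $K_0$, and to verify that polynomial growth is preserved up to constants; the leading exponent $d/(2\alpha)$ is unaffected, and the construction above then applies verbatim.
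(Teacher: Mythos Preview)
Your overall architecture matches the paper: extract $S^\ast$ via Lemma~\ref{lem:thin-subset-Zd}, estimate the shell counts $r_n$ using Lemmas~\ref{lem:ratio-asymptotic} and~\ref{lem:nu_bound}, build a Moran construction with sibling separation from Lemma~\ref{lem:sibling_cylinder_separation}, and invoke Lemma~\ref{lem:moran-lb-nd}. The asymptotics you write for the numerator and denominator of the Moran quotient are the right ones.

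The gap is in your treatment of the non--balanced case. You claim that without $S\subset\mathcal C_K$ the quantity $\min_j|I^{(j)}_\omega|$ can drop well below $t^{-n(n+1)}$, and you propose to fix this by passing to $S\cap\mathcal C_{K_0}$. Both the diagnosis and the remedy are incorrect. For the diagnosis: the lower bound on $\min_j|I^{(j)}_\omega|$ comes from the \emph{upper} bound on the digits, not from balance. Since every $\mathbf a_i\in S^\ast$ lies in the shell $\|\mathbf a_i\|_\infty<Lt^i$, we have $a_i^{(j)}\le Lt^i$ for \emph{every} coordinate $j$, whence Lemma~\ref{lem:1d_geometry}(i) gives $|I^{(j)}_\omega|\ge 2^{-2n-1}L^{-2n}t^{-n(n+1)}$ uniformly in $j$. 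No balance is needed; this is exactly how the paper obtains $\delta_n\asymp (2L)^{-2n}t^{-n(n+1)}$, and hence the lower bound \textup{(A4)} holds under polynomial density alone. Balance is only invoked afterwards, for the \emph{upper} bound via Corollary~\ref{cor:growth-balanced} and for coordinatewise divergence.

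Your proposed remedy would in fact break the proof: intersecting with $\mathcal C_{K_0}$ need not preserve polynomial density. For instance, $S=\N\times\{1\}\subset\N^2$ has $|S\cap Q_N|=N$, but $S\cap\mathcal C_{K_0}$ is finite for every $K_0$, so no seed set can be built from it. This is precisely why the paper (see Remark~\ref{rem:non-divergent-seeds}) deliberately refrains from imposing balance at the seed stage and only enforces it later, when it is actually needed.
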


\begin{remark}\label{rem:non-divergent-seeds}
Proposition~\ref{prop:extreme-seed} is a higher--dimensional adaptation of the
one--dimensional extreme--seed construction in \cite{Nakajima2025}.
Polynomial density alone does not force coordinatewise divergence of digits:
for example, $S=\mathbb N\times F$ with $F\subset\mathbb N^{d-1}$ finite has
exponent $\alpha=1$ but yields non--divergent seed sets.

In the later transference arguments we impose divergence only when it is needed,
either by working with (upper) density / upper Banach density witnesses whose
translations drift to infinity, or by assuming a uniform balance condition
$S\subset\mathcal C_K$. In the balanced case, the dimension lower bound in
Proposition~\ref{prop:extreme-seed} is sharp, giving \eqref{eq:dim-seed-balanced}.
We also note that stronger growth assumptions (e.g.\ $|S\cap Q_N|\gg N^{d-1+\varepsilon}$)
would already force divergence, but we do not use this here.
\end{remark}

\begin{proof}[Proof of Proposition~\ref{prop:extreme-seed}]
We work with the notion of extreme seed sets introduced in
Definition~\ref{def:extreme-seed}.

Assume that there exist $S^\ast\subset S$ and parameters $t>L>1$ such that
$R_{t,L}(S^\ast)$ is an extreme seed set associated with $S$, that is,
\textup{(A2)}--\textup{(A4)} hold with $K=S^\ast$ and $\dbar(S^\ast\mid S)=0$.
Condition \textup{(A1)} is part of the hypothesis.

If in addition $S\subset\mathcal C_K$, then Corollary~\ref{cor:growth-balanced}
yields the upper bound
\[
\dim_H(\mathcal E_S)\le \frac{d}{2\alpha}.
\]
Since $R_{t,L}(S^\ast)\subset \mathcal E_S$ by construction and \textup{(A4)}
provides the matching lower bound, we obtain
\[
\dim_H R_{t,L}(S^\ast)=\frac{d}{2\alpha},
\]
which proves the ``Moreover'' statement of the proposition.

In the balanced case, the uniform balance condition implies that the minimal digit
in each coordinate diverges along the construction of $R_{t,L}(S^\ast)$.
Consequently,
\[
R_{t,L}(S^\ast)\subset \cE_{\N^d},
\]
and the seed set may equivalently be defined with ambient space $\cE_{\N^d}$ in place
of $E_{\N^d}^{\mathrm{vec}}$, without affecting any of the conclusions.

It therefore remains to establish the existence of a subset $S^\ast\subset S$ and
parameters $t>L>1$ (with $t$ sufficiently large) such that $R_{t,L}(S^\ast)$ satisfies
\textup{(A2)}--\textup{(A4)} and $\dbar(S^\ast\mid S)=0$.

\medskip
\noindent\textbf{Step 1: Extraction of a thin subset $S^\ast$.}

Since $S\subset\N^d$ has polynomial density with exponent $\alpha\ge1$, there exist constants $C_1,C_2>0$ and $\beta\ge0$ such that for all sufficiently large $N$,
\[
C_1 \frac{N^{d/\alpha}}{(\log N)^\beta} \le |S\cap Q_N| \le C_2 \frac{N^{d/\alpha}}{(\log N)^\beta}.
\]

By Lemma \ref{lem:thin-subset-Zd}, there exists $S^\ast\subset S$ such that:
\begin{itemize}[leftmargin=2em]
  \item[(i)] $S^\ast$ has no nearest neighbour pairs: $\|\mathbf{v}-\mathbf{w}\|_\infty\ge2$ for all distinct $\mathbf{v},\mathbf{w}\in S^\ast$;
  \item[(ii)] There exist constants $c_1,c_2>0$ such that for all large $N$,
  \[
  \frac{c_1}{\nu(|S\cap Q_N|)} \le \frac{|S^\ast\cap Q_N|}{|S\cap Q_N|} \le \frac{c_2}{\nu(|S\cap Q_N|)};
  \]
  \item[(iii)] In particular, $\dbar(S^\ast\mid S)=0$.
\end{itemize}
Thus (A2) and (A3) hold for $K=S^\ast$.

\medskip
\noindent
\textbf{Step 2: Choice of parameters and growth estimates.}

By Lemma~\ref{lem:ratio-asymptotic}, for any $t>L>1$ there exist constants
$C'_1,C'_2>0$, independent of $t$ and $L$, such that for all sufficiently large $n$,
\[
C'_1 L^{-d/\alpha}
\le
\frac{|S\cap Q_{t^n}|}{|S\cap Q_{L t^n}|}
\le
C'_2 L^{-d/\alpha}.
\]

Choose $L>1$ so that
\[
\frac{c_1 C_2'^{-1} L^{d/\alpha}}{2}-c_2 > C_1^{-1},
\]
and then fix $t>\max\{L,2\}$ sufficiently large so that $\nu(|S\cap Q_t|)\ge2$.

For each $n\in\N$, define the annulus
\[
A_n:=\{\mathbf v\in S^\ast:\ t^n\le \|\mathbf v\|_\infty< L t^n\},
\qquad
r_n:=|A_n|.
\]

Fix $n$ sufficiently large and choose $k=k(n)$ such that
\[
k!\le |S\cap Q_{t^n}|<(k+1)!.
\]
By the left-hand inequality above, for all sufficiently large $n$,
\[
|S\cap Q_{L t^n}|
\le (C'_1)^{-1} L^{d/\alpha}\,|S\cap Q_{t^n}|
< (k+2)!,
\]
after increasing $n$ if necessary.

By the definition of $\nu$, this implies:
\[
0 \le \nu(|S\cap Q_{L t^n}|) - \nu(|S\cap Q_{t^n}|) \le 1.
\]

By Lemma~\ref{lem:nu_bound}, there exists $\lambda > 1$ such that
\[
\nu(|S\cap Q_{t^n}|) \le \nu(t^{nd}) \le (nd\log t)^\lambda = d^\lambda (n\log t)^\lambda
\]
for all $n \ge 1$.

Since we can choose $\lambda' > \lambda$ such that $d^\lambda (n\log t)^\lambda \le (n\log t)^{\lambda'}$ for all $n$, we may therefore assume without loss of generality that
\[
\nu(|S\cap Q_{t^n}|) \le (n\log t)^\lambda \quad \text{for all } n.
\]

We now estimate $r_n$:
\begin{align*}
r_n &= |S^\ast\cap Q_{L t^n}| - |S^\ast\cap Q_{t^n}| \\
&\ge \frac{c_1|S\cap Q_{L t^n}|}{\nu(|S\cap Q_{L t^n}|)} - \frac{c_2|S\cap Q_{t^n}|}{\nu(|S\cap Q_{t^n}|)} \\
&\ge \frac{c_1 {C'_2}^{-1} L^{d/\alpha}|S\cap Q_{t^n}|}{\nu(|S\cap Q_{t^n}|)+1} - \frac{c_2|S\cap Q_{t^n}|}{\nu(|S\cap Q_{t^n}|)} \\
&= \frac{|S\cap Q_{t^n}|}{\nu(|S\cap Q_{t^n}|)} \left( \frac{c_1 {C'_2}^{-1} L^{d/\alpha}}{1 + \frac{1}{\nu(|S\cap Q_{t^n}|)}} - c_2 \right) \\
&\ge \frac{|S\cap Q_{t^n}|}{\nu(|S\cap Q_{t^n}|)} \left( \frac{c_1 {C'_2}^{-1} L^{d/\alpha}}{2} - c_2 \right).
\end{align*}
where the last inequality follows from $\nu(|S\cap Q_{t^n}|) \ge 2$ for large $n$.

By our choice of $L$, we have $\frac{c_1 {C'_2}^{-1} L^{d/\alpha}}{2} - c_2 \ge {C_1}^{-1}$, so
\[
r_n \ge \frac{|S\cap Q_{t^n}|}{{C_1}\nu(|S\cap Q_{t^n}|)}.
\]

\medskip
\noindent\textbf{Step 3: Construction of the Moran fractal.}

Using the bounds $|S\cap Q_{t^n}| \ge C_1 t^{n d/\alpha}/(n\log t)^\beta$ and $\nu(|S\cap Q_{t^n}|) \le (n\log t)^\lambda$, we obtain
\begin{equation}\label{eq:rn-estimate}
r_n \ge  \frac{t^{n d/\alpha}}{(n\log t)^\beta} \cdot \frac{1}{(n\log t)^\lambda} =   \frac{t^{n d/\alpha}}{(n\log t)^\rho},
\end{equation}
where $\rho = \beta + \lambda$.

Consider the symbolic space:
\[
\Sigma^\ast = \{(\mathbf{v}_1,\mathbf{v}_2,\ldots) : \mathbf{v}_n \in A_n \text{ for all } n\}
\]
and
\[
\Sigma^\ast_n = \{(\mathbf{v}_1,\mathbf{v}_2,\ldots, \mathbf{v}_n) : \mathbf{v}_i \in A_i \text{ for all } i \in \{1,\ldots,n\}\}.
\]
For each $\omega = (\mathbf{v}_1,\ldots,\mathbf{v}_n) \in \Sigma^\ast_n$, define the cylinder set:
\[
I_\omega = \{\mathbf{x}\in (0,1)^d :\mathbf{a}_i(\mathbf{x})=\mathbf{v}_i \text{ for } i=1,\ldots,n\}.
\]

Define
\[
F:=\bigcap_{n=1}^\infty \ \bigcup_{\omega\in\Sigma_n^\ast}\overline{I_\omega}.
\]
We show \(F\subset R_{t,L}(S^\ast)\) by proving that every \(\mathbf{x}\in F\) has
irrational coordinates; in particular, \(\mathbf{x}\notin \overline{I_\omega}\setminus I_\omega\)
for all \(\omega\). Therefore \(\mathbf{x}\in R_{t,L}(S^\ast)\).

Fix \(\mathbf{x}\in F\). For each \(n\ge 1\) there exists \(\omega^{(n)}\in\Sigma_n^\ast\) such that
\(\mathbf{x}\in \overline{I_{\omega^{(n)}}}\). Suppose \(x_j\in\mathbb{Q}\) for some \(j\). Then \(x_j\)
has only finite continued--fraction expansions, hence a maximal length \(L\).

On the other hand, \(x_j\in \overline{I^{(j)}_{\omega^{(n)}}}\) for all \(n\) implies that for each \(n\),
\(x_j\) shares a continued--fraction prefix of length at least \(n-1\) with \(\omega^{(n)}\). Thus \(x_j\) admits expansions of arbitrarily large length, a contradiction.
Hence \(x_j\notin\mathbb{Q}\) for every \(j\). Consequently \(\mathbf{x}\) has a unique infinite expansion
compatible with \(S^\ast\), and therefore \(\mathbf{x}\in R_{t,L}(S^\ast)\).

We next verify
that $F$ is a Moran fractal.
For each $n\ge1$ set $\mathcal Q_n:=\{\overline{I_\omega}:\omega\in\Sigma_n^\ast\}$.
By Remark~\ref{rmk:uniform-cylinder-decay-d},
\[
\max_{Q\in\mathcal Q_n}\diam(Q)
=
\max_{\omega\in\Sigma_n^\ast}\diam(\overline{I_\omega})
\le \sqrt d\,C\,\vartheta^n\to0,
\]
so the vanishing mesh condition holds.

Since $t>L$, the annuli $\{v\in\N^d: t^n\le \|v\|_\infty< Lt^n\}$ are pairwise disjoint.
Fix $n\ge2$ and a parent cylinder $\overline{I_\omega}$ with $\omega\in\Sigma_{n-1}^\ast$.
By \eqref{eq:rn-estimate}, the number of children $\overline{I_{\omega'}}\subset\overline{I_\omega}$
at level $n$ is at least $\lfloor t^{nd/\alpha}/(n\log t)^\rho\rfloor$.

To obtain separation inside a parent, let $\omega,\omega'\in\Sigma_n^\ast$ be distinct
words with the same prefix of length $n-1$. Since distinct vectors in $S^\ast$ satisfy
$\|\mathbf v-\mathbf w\|_\infty\ge2$, Lemma~\ref{lem:sibling_cylinder_separation} yields
\[
\dist(I_\omega,I_{\omega'})\ge
c\,\min_{1\le j\le d}\min\bigl(|I_\omega^{(j)}|,\ |I_{\omega'}^{(j)}|\bigr).
\]
For any $\omega\in\Sigma_n^\ast$, Lemma~\ref{lem:1d_geometry}(i) gives
\[
|I_\omega^{(j)}|\ge \frac12\prod_{i=1}^n (a_i^{(j)}+1)^{-2}.
\]
Using $a_i^{(j)}\le \|\mathbf v_i\|_\infty\le Lt^i$ and $Lt^i+1\le 2Lt^i$,
we obtain
\[
|I_\omega^{(j)}|
\ge 2^{-2n-1}L^{-2n}t^{-n(n+1)}.
\]
Hence, for a constant $c'>0$ depending only on $d$,
\[
\dist(I_\omega,I_{\omega'})\ge \delta_n,
\qquad
\delta_n:=c'(2L)^{-2n}t^{-n(n+1)}.
\]
Therefore $F$ is a Moran fractal with branching numbers
$r_n=\lfloor t^{nd/\alpha}/(n\log t)^\rho\rfloor$ and gap scales $(\delta_n)_{n\ge1}$.

\medskip
\noindent\textbf{Step 4: Hausdorff dimension estimate.}

By Lemma~\ref{lem:moran-lb-nd},
\[
\dim_H F \ge \liminf_{n\to\infty}
\frac{\log\bigl(\prod_{i=1}^{n-1} r_i\bigr)}{-\log\bigl(r_n^{1/d}\delta_n\bigr)}.
\]
Since $\log r_i=\frac{d}{\alpha}i\log t+O(\log i)$, we have
\[
\log\Bigl(\prod_{i=1}^{n-1} r_i\Bigr)
=
\sum_{i=1}^{n-1}\log r_i
=
\frac{d}{2\alpha}n^2\log t+o(n^2\log t).
\]
Moreover,
\[
-\log\bigl(r_n^{1/d}\delta_n\bigr)
=
-\frac1d\log r_n-\log\delta_n
=
n(n+1)\log t+o(n^2\log t)
=
n^2\log t+o(n^2\log t),
\]
because $-\log\delta_n=n(n+1)\log t+O(n)$ and $\log r_n=O(n)$.
Therefore,
\[
\dim_H F \ge \frac{d}{2\alpha}.
\]
Since $F\subset R_{t,L}(S^\ast)$, it follows that
\[
\dim_H R_{t,L}(S^\ast)\ge \frac{d}{2\alpha},
\]
which establishes \textup{(A4)}. 

This establishes the existence statement, and together with the discussion at the beginning of the proof, completes the proof of Proposition~\ref{prop:extreme-seed}.

\end{proof}

\subsection{Adding digits and elimination maps}
\label{subsec:step3}

 Starting from an extreme seed set $R_{t,L}(B)$, we now \emph{insert} finitely many
digit vectors from a target set $A$ along a sparse sequence of levels, so that the
resulting digit set realizes the desired (relative) density while the Hausdorff
dimension is essentially unchanged.  The key technical point is that the associated
\emph{elimination map}, which deletes the inserted blocks, is Hölder with every
exponent $\gamma\in(0,1)$ once suitable growth conditions are enforced.

\subsubsection{Rigorous digit insertion in higher dimensions}
Fix parameters $t>L>1$.
Let $A,B\subset\N^d$ be infinite sets such that $R_{t,L}(B)\neq\emptyset$.
Let $(M_k)_{k\ge1}$ be a strictly increasing sequence with $M_0=0$, and let
$(W_k)_{k\ge1}$ be finite subsets of $A$.
For each $k$, fix an ordering
\[
  W_k=\{w_{k,1},\dots,w_{k,m_k}\},\qquad m_k:=|W_k|.
\]

Given $\by\in R_{t,L}(B)$ with digit sequence
\[
  \bigl(\ba_1(\by),\ba_2(\by),\dots\bigr)\in(\N^d)^{\N},
\]
we define $\bx=\iota(\by)$ by inserting the word $(w_{k,1},\dots,w_{k,m_k})$
\emph{immediately after} $\ba_{M_k}(\by)$ for each $k$, i.e.
\[
  (\ba_n(\bx))_{n\ge1}
  =
  \ba_1(\by),\dots,\ba_{M_1}(\by),
  w_{1,1},\dots,w_{1,m_1},
  \ba_{M_1+1}(\by),\dots,\ba_{M_2}(\by),
  w_{2,1},\dots,w_{2,m_2},\ \dots\ .
\]
We set
\[
  R_{t,L}(B,A,(M_k),(W_k))
  :=
  \{\iota(\by):\by\in R_{t,L}(B)\}.
\]
The \emph{elimination map} $f$ is defined on this set by deleting the inserted
blocks and recovering the underlying seed sequence:
\[
  f(\iota(\by)):=\by.
\]
Thus $f$ is a bijection from $R_{t,L}(B,A,(M_k),(W_k))$ onto $R_{t,L}(B)$.

\begin{remark}
In our applications we choose the inserted blocks so that no digit vector is repeated
(e.g.\ by ensuring $B\cap \bigcup_k W_k=\emptyset$ and that the $W_k$ are pairwise
disjoint).  This guarantees that the inserted set remains inside the natural
no--repetition ambient space.  The ordering of each $W_k$ is irrelevant for all
dimension estimates and only serves to make $\iota$ (hence $f$) well-defined.
\end{remark}

\subsubsection{Growth conditions}

Let $(\varepsilon_k)_{k\ge1}$ be a decreasing sequence of positive numbers with
$\varepsilon_k\downarrow0$.  We record the quantitative constraints we will impose
(inductively) on $M_k$ (and later on $W_k$).  These conditions are chosen to control
how much the insertion can distort continued-fraction cylinders; compare
\cite[Section~3.4]{Nakajima2025}.

\begin{align}
  Lt^{M_{k-1}} &< M_k^{1/(2d)}\label{growth0}\\
  Lt^{M_{k-1}} + M_k^{1/(2d)} &< t^{M_k}\label{growth1}\\
  \frac{1}{(Lt^{M_{k-1}} + M_k^{1/(2d)} + 1)^{2^{d+1}M_k^{1/2}}}
  &\ge
  \left(\prod_{i=M_{k-1}+1}^{M_k} t^{-2i}\right)^{3\varepsilon_k}\label{growth2}\\
  \inf_{n \ge M_k}
  \frac{\prod_{i=1}^{n+1} (t^{i+2})^{-2}\cdot\left(\prod_{i=1}^n t^{-2i}\right)^{3\varepsilon_k}}
       {\left(\prod_{i=1}^n t^{-2i}\right)^{1+4\varepsilon_k}}
  &\ge 2.\label{growth3}
\end{align}

If the blocks $(W_k)$ have already been specified, we also require
\begin{equation}\label{growth4}
  \prod_{w\in W_k}(\|w\|_\infty+1)^{-2}
  \;\ge\;
  \left(\prod_{i=M_{k-1}+1}^{M_k} t^{-2i}\right)^{3\varepsilon_k}
  \qquad\text{whenever } W_k\neq\emptyset.
\end{equation}

\begin{remark}
Heuristically, \eqref{growth0}--\eqref{growth1} ensure that the inserted digits are
\emph{small} compared with the seed scale $t^{M_k}$, while \eqref{growth2}--\eqref{growth4}
quantify that the cumulative contraction contributed by the inserted blocks is
dominated by the contraction of the seed levels.  This is what ultimately yields the
almost--Lipschitz (Hölder for every $\gamma<1$) behaviour of the elimination map.
\end{remark}

\begin{lemma}\label{lem:parameter_selection}
Fix $t>L>1$ and a sequence $\varepsilon_k\downarrow0$.
Given $M_{k-1}$, one can choose $M_k$ sufficiently large so that
\eqref{growth0}--\eqref{growth3} hold.  In particular, there exists a strictly
increasing sequence $(M_k)_{k\ge1}$ satisfying \eqref{growth0}--\eqref{growth3}.
\end{lemma}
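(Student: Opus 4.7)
The plan is to prove the inductive step: given $M_{k-1}$, I show that each of \eqref{growth0}--\eqref{growth3} reduces to a lower bound of the form $M_k \ge F_j(M_{k-1}, t, L, d, \varepsilon_k)$, so that taking $M_k$ to exceed the maximum of these four thresholds (and in particular $M_{k-1}$) satisfies all four conditions simultaneously. Iterating the inductive step starting from any $M_0$ then produces the desired strictly increasing sequence.

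Conditions \eqref{growth0} and \eqref{growth1} are routine polynomial-vs-exponential comparisons. The first reduces to $M_k > (Lt^{M_{k-1}})^{2d}$, giving a concrete threshold depending only on $M_{k-1}$. Once \eqref{growth0} holds, one has $Lt^{M_{k-1}} + M_k^{1/(2d)} \le 2M_k^{1/(2d)}$, so \eqref{growth1} reduces to $2M_k^{1/(2d)} < t^{M_k}$, which is immediate for $M_k$ large. For \eqref{growth2}, taking logarithms reduces the inequality to
\[
2^{d+1} M_k^{1/2} \log\bigl(Lt^{M_{k-1}} + M_k^{1/(2d)} + 1\bigr) \le 6\varepsilon_k (\log t) \sum_{i=M_{k-1}+1}^{M_k} i.
\]
Using \eqref{growth0}, the left-hand side is $O(M_k^{1/2} \log M_k)$, while the right-hand side is $\sim 3\varepsilon_k (\log t) M_k^2$, so the inequality holds for all sufficiently large $M_k$.

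The main obstacle, and the only step requiring care, is \eqref{growth3}, since it must hold uniformly over all $n \ge M_k$. A direct computation collecting powers of $t$ via the identities $\sum_{i=1}^{n+1}(i+2) = (n+1)(n+6)/2$ and $\sum_{i=1}^{n} 2i = n(n+1)$ simplifies the ratio inside the infimum to $t^{(n+1)(\varepsilon_k n - 6)}$. Viewed as a quadratic in $n$, the exponent has derivative $2\varepsilon_k n + \varepsilon_k - 6$, which is positive for $n \ge 3/\varepsilon_k$. Hence, once $M_k > 6/\varepsilon_k$, the exponent is strictly increasing on $[M_k, \infty)$ and positive at $n = M_k$, so the infimum is attained at $n = M_k$ and equals $t^{(M_k+1)(\varepsilon_k M_k - 6)}$. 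This last quantity exceeds $2$ as soon as $(M_k+1)(\varepsilon_k M_k - 6) \ge (\log 2)/\log t$, yielding the final threshold. Taking $M_k$ to exceed the maximum of the four thresholds just derived closes the induction and produces a strictly increasing sequence satisfying \eqref{growth0}--\eqref{growth3}.
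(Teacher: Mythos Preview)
Your proof is correct and follows essentially the same approach as the paper: reduce each of \eqref{growth0}--\eqref{growth3} to an explicit lower-bound threshold on $M_k$, with \eqref{growth3} handled by collapsing the ratio to a single power of $t$. Your computation of the exponent in \eqref{growth3} as $(n+1)(\varepsilon_k n-6)$ is in fact the correct one (the paper records $(n+1)(n\varepsilon_k-4)$, a harmless arithmetic slip), and your explicit derivative check that the infimum over $n\ge M_k$ is attained at $n=M_k$ is a useful addition the paper leaves implicit.
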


\begin{proof}
Conditions \eqref{growth0}–\eqref{growth1} are immediate. We verify \eqref{growth2} and \eqref{growth3}.

For \eqref{growth2}, the right-hand side equals
\[
   t^{-3\varepsilon_k (M_k+M_{k-1}+1)(M_k-M_{k-1}) } .
\]
The left-hand side is
\[
   t^{-\,2^{d+1}M_k^{1/2}\,\log_t(L t^{M_{k-1}} + M_k^{1/(2d)} + 1)} .
\]
Since
\[
\log_t(L t^{M_{k-1}} + M_k^{1/(2d)} + 1) < M_k^{1/2}
\]
for large \(M_k\), the left-hand side is \(> t^{-2^{d+1}M_k}\). Thus it suffices to check
\[
   t^{-2^{d+1}M_k}
   \ge
   t^{-3\varepsilon_k (M_k+M_{k-1}+1)(M_k-M_{k-1})},
\]
which holds for large \(M_k\).

For \eqref{growth3}, the expression simplifies to
\[
   t^{(n+1)(n\varepsilon_k - 4)}.
\]
This exceeds \(2\) once \(n\varepsilon_k>4\), which is the case for sufficiently large \(M_k\). 
\end{proof}

\subsubsection{Almost Lipschitz property with explicit estimates}
\begin{theorem}\label{thm:elim-almost-lip}
Fix parameters $t > L > 1$.  
Let $A,B \subset \N^{d}$ be infinite sets such that $R_{t,L}(B)$ is non-empty and
\(\|\mathbf{v}-\mathbf{w}\|_\infty \ge 2\) for all distinct \(\mathbf{v},\mathbf{w}\in B\).
Let $(M_k)_{k\ge 1}$ be a strictly increasing sequence with $M_0=0$,
$(W_k)_{k\ge 1}$ finite subsets of $A$, and
$(\varepsilon_k)_{k\ge 1}$ a decreasing sequence of positive numbers converging to $0$,
all satisfying \eqref{growth3} and \eqref{growth4}.

Then, for every $\gamma\in(0,1)$ there exists $C_\gamma>0$ such that
\begin{equation}\label{eq:holder-gamma}
  \|f(\mathbf{x}_1)-f(\mathbf{x}_2)\|_\infty
  \;\le\; C_\gamma\,\|\mathbf{x}_1-\mathbf{x}_2\|_\infty^{\gamma}
  \quad
  \text{for all }\mathbf{x}_1,\mathbf{x}_2\in
  R_{t,L}(B,A,(M_k),(W_k)).
\end{equation}
Thus $f$ is Hölder continuous with every exponent $\gamma\in(0,1)$ on this set.
\end{theorem}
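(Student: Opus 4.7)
My plan is to upgrade the one--dimensional almost--Lipschitz argument of Nakajima--Takahasi \cite[Theorem~3.4]{Nakajima2025} by replacing its one--dimensional cylinder and sibling estimates with the anisotropic analogues from Section~\ref{sec:prelim}: Proposition~\ref{prop:cylinder_estimate} for cylinder diameters and Lemma~\ref{lem:sibling_cylinder_separation} for sibling separations. The central strategy is to compare the diameter of the seed cylinder containing $\by_1,\by_2$ to the sibling separation at the first disagreement of $\bx_1,\bx_2$, and to show that the resulting ratio is governed by the growth conditions \eqref{growth3}--\eqref{growth4}.

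First I would locate the first disagreement. Because the inserted blocks $W_k$ are deterministic and common to every point of $R_{t,L}(B,A,(M_k),(W_k))$, the first disagreement of $\bx_1$ and $\bx_2$ must sit at a seed position. Let $n$ be the largest index with $\ba_i(\by_1)=\ba_i(\by_2)$ for $i\le n$, where $\by_i:=f(\bx_i)$; let $k$ satisfy $M_k<n+1\le M_{k+1}$ and set $N:=n+\sum_{j\le k}m_j$. Then $\bx_1$ and $\bx_2$ share their first $N$ digit vectors, while $\|\ba_{N+1}(\bx_1)-\ba_{N+1}(\bx_2)\|_\infty\ge 2$ from the $\ell^\infty$--$2$--separation of $B$. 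Proposition~\ref{prop:cylinder_estimate} together with $\|\ba_i(\by_1)\|_\infty\ge t^i$ gives
\[
\|\by_1-\by_2\|_\infty\le c_2\max_{1\le j\le d}\prod_{i=1}^{n}\bigl(a_i^{(j)}(\by_1)\bigr)^{-2},
\]
while Lemma~\ref{lem:sibling_cylinder_separation} combined with Lemma~\ref{lem:1d_geometry}(i) gives
\[
\|\bx_1-\bx_2\|_\infty\ge c\,\min_{1\le j\le d}\prod_{i=1}^{N+1}\bigl(\tilde a_i^{(j)}+1\bigr)^{-2}.
\]
The seed factors in the latter product are bounded below by $(2Lt^i)^{-2}$ coordinatewise (because $\tilde a_i^{(j)}\le Lt^i$), and the inserted factors are bounded below block by block via \eqref{growth4}, which is stated in $\ell^\infty$-uniform form and therefore passes to every coordinate since $(w^{(j)}+1)^{-2}\ge(\|w\|_\infty+1)^{-2}$.

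Combining these two bounds, the quotient $\|\by_1-\by_2\|_\infty/\|\bx_1-\bx_2\|_\infty^{\gamma}$ reduces to an inequality whose net exponent is precisely controlled by \eqref{growth3}: given $\gamma\in(0,1)$, choose $k_0$ with $\varepsilon_{k_0}$ small enough relative to $1-\gamma$ so that for every $n\ge M_{k_0}$ the resulting Hölder exponent is $1/(1+O(\varepsilon_{k_0}))>\gamma$; the finitely many smaller $n$ contribute only an additive correction that is absorbed into $C_\gamma$. The main obstacle is the anisotropy of multidimensional cylinders, namely that the coordinate achieving the $\max$ in the $\by$-diameter need not coincide with the coordinate achieving the $\min$ in the $\bx$-sibling separation. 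This mismatch is absorbed by the $\ell^\infty$--uniform shape of both \eqref{growth4} and the seed bound $\|\ba_i\|_\infty\le Lt^i$, so the coordinate-specific slack becomes a dimensional constant that can be folded into $C_\gamma$, and the final bookkeeping reduces to exactly the scalar comparison carried out in the one-dimensional argument.
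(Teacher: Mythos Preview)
Your approach mirrors the paper's: locate the first seed-disagreement index $n$, bound $\|\by_1-\by_2\|_\infty$ above by the depth-$n$ cylinder diameter and $\|\bx_1-\bx_2\|_\infty$ below by a sibling-separation estimate, then feed both into \eqref{growth3}--\eqref{growth4}. The only real difference is that you invoke Lemma~\ref{lem:sibling_cylinder_separation} (a $\min_j$ bound) where the paper uses Lemma~\ref{lem:cylinder_digit_separation} (a $\max_{j\in\cD}$ bound); after replacing each coordinate digit by $\|\cdot\|_\infty$, both routes give the same uniform lower bound on $\|\bx_1-\bx_2\|_\infty$, so this substitution is harmless.

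There is, however, a gap in your resolution of what you correctly identify as the main obstacle. The $\ell^\infty$-uniform bounds $\|\ba_i\|_\infty\le Lt^i$ and \eqref{growth4} supply \emph{upper} bounds on every digit coordinate, and that is exactly what is needed to lower bound $\|\bx_1-\bx_2\|_\infty$. But converting the upper bound $\|\by_1-\by_2\|_\infty\le c_2\max_j\prod_i (a_i^{(j)})^{-2}$ into $C\prod_i t^{-2i}$ requires \emph{lower} bounds on the seed digits in \emph{every} coordinate, and the seed condition $t^i\le\|\ba_i\|_\infty$ constrains only the largest coordinate. So the ``coordinate-specific slack'' is not a dimensional constant in general: if one coordinate of the vectors in $B$ stayed bounded, the $\by$-cylinder would remain long in that direction and no H\"older estimate would follow. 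The paper's proof makes the same leap without comment (it simply asserts $\|\by_1-\by_2\|_\infty\le C_3\prod_i t^{-2i}$). What actually closes the gap is that in every application of the theorem in Sections~\ref{subsec:ubd-proof}--\ref{sec:proofs} the set $B$ is chosen inside some $\mathcal C_K$, whence $a_i^{(j)}\ge K^{-1}\|\ba_i\|_\infty\ge K^{-1}t^i$ for all $j$; you should make this balance hypothesis explicit in your argument rather than claim it is absorbed by the $\ell^\infty$ form of the growth conditions.
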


  \begin{remark}
The hypotheses \eqref{growth3}--\eqref{growth4} are the natural higher--dimensional
counterparts of the one--dimensional assumptions in \cite[Proposition~3.5]{Nakajima2025}.
The proof follows the same strategy, with the only changes coming from the geometry
of $d$--dimensional cylinders (diameter and separation estimates).
\end{remark}

\begin{proof}[Proof of Theorem~\ref{thm:elim-almost-lip}]
For $\bx_1,\bx_2\in R_{t,L}(B,A,(M_k),(W_k))$, set $\by_i:=f(\bx_i)\in R_{t,L}(B)$ and define
\[
n:=\min\{m\ge0:\ \ba_{m+1}(\by_1)\neq \ba_{m+1}(\by_2)\}.
\]
Since $B$ has no nearest--neighbour pairs, equivalently
\[
n=\min\{m\ge0:\ \|\ba_{m+1}(\by_1)-\ba_{m+1}(\by_2)\|_\infty\ge2\}.
\]
Fix $k\in\N$ and distinguish whether $n<M_k$ or $n\ge M_k$.

\smallskip
\noindent\textbf{Case~1: $n<M_k$.}
Then the discrepancy occurs before any insertions, so $(\bx_1,\bx_2)$ ranges over finitely many cylinder pairs. By Proposition~\ref{prop:cylinder_estimate} and Lemma~\ref{lem:cylinder_digit_separation} there exists $\delta_0=\delta_0(k)>0$ such that $\|\bx_1-\bx_2\|_\infty\ge\delta_0$. Since $\|\by_1-\by_2\|_\infty\le1$, taking $C_\gamma^{(1)}:=\delta_0^{-\gamma}$ yields
\begin{equation}\label{eq:case1-holder-clean}
\|\by_1-\by_2\|_\infty\le C_\gamma^{(1)}\|\bx_1-\bx_2\|_\infty^\gamma.
\end{equation}

\smallskip
\noindent\textbf{Case~2: $n\ge M_k$.}
Let $q\ge k$ be such that $M_q\le n<M_{q+1}$. By construction,
\[
\ba_i(\bx_1)=\ba_i(\bx_2)\qquad \text{for all }1\le i\le n+\sum_{j=1}^q|W_j|.
\]
Hence the first discrepancy for $(\bx_1,\bx_2)$ occurs at
\[
n':=n+\sum_{j=1}^q|W_j|+1.
\]
Moreover, insertions only shift indices, so $\ba_{n'}(\bx_i)=\ba_{n+1}(\by_i)$ for $i=1,2$.
Since distinct vectors in $B$ satisfy $\|\bv-\mathbf{w}\|_\infty\ge 2$, we have
\[
\ba_i(\bx_1)=\ba_i(\bx_2)\ \ (1\le i\le n'-1),
\qquad
\|\ba_{n'}(\bx_1)-\ba_{n'}(\bx_2)\|_\infty
=\|\ba_{n+1}(\by_1)-\ba_{n+1}(\by_2)\|_\infty\ge 2.
\]
Apply Lemma~\ref{lem:cylinder_digit_separation},
\[
\|\bx_1-\bx_2\|_\infty
\ge
C_1\max_{j\in\cD}
\left(
|I(a_1^{(j)}(\bx_1),\ldots,a_{n'-1}^{(j)}(\bx_1))|
\cdot
\frac{|a_{n'}^{(j)}(\bx_1)-a_{n'}^{(j)}(\bx_2)|}
{a_{n'}^{(j)}(\bx_1)\,a_{n'}^{(j)}(\bx_2)}
\right),
\]
where $\cD:=\{1\le j\le d:\ |a_{n'}^{(j)}(\bx_1)-a_{n'}^{(j)}(\bx_2)|\ge 2\}$.
Since $a_{n'}(\bx_i)=a_{n+1}(\by_i)\in B$ and $\by_i\in R_{t,L}(B)$, we have
$a_{n'}^{(j)}(\bx_i)\le Lt^{\,n+1}$ for all $j$ and $i=1,2$, and hence
\[
\max_{j\in\cD}
\frac{|a_{n'}^{(j)}(\bx_1)-a_{n'}^{(j)}(\bx_2)|}
{a_{n'}^{(j)}(\bx_1)\,a_{n'}^{(j)}(\bx_2)}
=
\max_{j\in\cD}
\frac{|a_{n+1}^{(j)}(\by_1)-a_{n+1}^{(j)}(\by_2)|}
{a_{n+1}^{(j)}(\by_1)\,a_{n+1}^{(j)}(\by_2)}
\ge (Lt^{\,n+1})^{-2}.
\]
Using the lower diameter bound of Proposition~\ref{prop:cylinder_estimate}, we have
\[
  \|\mathbf{x}_1-\mathbf{x}_2\|_\infty
  \ge
  C_2 (Lt^{n+1})^{-2}
    \prod_{i=1}^{n}\bigl(\|a_i(\mathbf{y}_1)\|_\infty+1\bigr)^{-2}
    \prod_{j=1}^q\prod_{w\in W_j}\bigl(\|w\|_\infty+1\bigr)^{-2}.
\]
Since $t$ is chosen sufficiently large, we may assume $t^2\ge L+1$, hence
$Lt^{i}+1\le (L+1)t^i\le t^{i+2}$ for all $i\ge1$. Therefore,
\[
  \|\bx_1-\bx_2\|_\infty
  \ge
  C_2
  \prod_{i=1}^{n+1} t^{-2(i+2)}
  \prod_{j=1}^q\prod_{w\in W_j}(\|w\|_\infty+1)^{-2}.
\]
By \eqref{growth4}, for each $1\le j\le q$ we have
\[
  \prod_{w\in W_j}(\|w\|_\infty+1)^{-2}
  \ge
  \left(\prod_{s=M_{j-1}+1}^{M_j} t^{-2s}\right)^{3\varepsilon_j}.
\]
Hence
\[
  \|\bx_1-\bx_2\|_\infty
  \ge
  C_2
  \prod_{i=1}^{n+1} t^{-2(i+2)}
  \prod_{j=1}^q
  \left(\prod_{s=M_{j-1}+1}^{M_j} t^{-2s}\right)^{3\varepsilon_j}.
\]

Because $(\varepsilon_j)$ is decreasing and $q\ge k$, we obtain
\[
  \|\bx_1-\bx_2\|_\infty
  \ge
  C_2
  \prod_{i=1}^{n+1} t^{-2(i+2)}
  \left(\prod_{j=1}^k\left(\prod_{i=M_{j-1}+1}^{M_j} t^{-2i}\right)^{3\varepsilon_j}\right)
  \left(\prod_{i=1}^{M_q} t^{-2i}\right)^{3\varepsilon_k}.
\]
Let
\[
  D_k
  :=
  \prod_{j=1}^k\left(\prod_{i=M_{j-1}+1}^{M_j} t^{-2i}\right)^{3\varepsilon_j}.
\]
Since $M_q\le n$, we further have
\[
  \|\bx_1-\bx_2\|_\infty
  \ge
  C_2 D_k
  \prod_{i=1}^{n+1} t^{-2(i+2)}
  \left(\prod_{s=1}^{n} t^{-2s}\right)^{3\varepsilon_k}.
\]
Using \eqref{growth3},
\[
  \inf_{n\ge M_k}
  \frac{
    \prod_{i=1}^{n+1}t^{-2(i+2)}\cdot\left(\prod_{i=1}^n t^{-2i}\right)^{3\varepsilon_k}
  }{
    \left(\prod_{i=1}^n t^{-2i}\right)^{1+4\varepsilon_k}
  }\ge2,
\]
we conclude
\[
  \|\mathbf{x}_1-\mathbf{x}_2\|_\infty
  \ge
  2C_2D_k\,\left(\prod_{i=1}^n t^{-2i}\right)^{1+4\varepsilon_k}.
\]

On the other hand, the upper cylinder diameter bound gives
\[
  \|\mathbf{y}_1-\mathbf{y}_2\|_\infty
  \le
  C_3\prod_{i=1}^n t^{-2i}.
\]
Combining,
\[
  \|\mathbf{x}_1-\mathbf{x}_2\|_\infty
  \ge
  \frac{2C_2D_k}{C_3^{1/(1+4\varepsilon_q)}}\,
  \|\mathbf{y}_1-\mathbf{y}_2\|_\infty^{1+4\varepsilon_q},
\]
which implies
\begin{equation}\label{eq:holder-q-clean}
  \|\mathbf{y}_1-\mathbf{y}_2\|_\infty
  \le
  C_*\|\mathbf{x}_1-\mathbf{x}_2\|_\infty^{\theta_q},
  \qquad
  \theta_q:=\frac{1}{1+4\varepsilon_q}.
\end{equation}

Fix $\gamma\in(0,1)$.  
Since $\varepsilon_k\to0$, there exists $K$ with $\theta_k\ge\gamma$ for all $k\ge K$.
If $n\ge M_K$, then $q\ge K$ and \eqref{eq:holder-q-clean} gives
\[
  \|\mathbf{y}_1-\mathbf{y}_2\|_\infty
  \le
  C_4\|\mathbf{x}_1-\mathbf{x}_2\|_\infty^\gamma.
\]
If $n<M_K$, applying \eqref{eq:case1-holder-clean} (with $M_1$ replaced by $M_K$)
yields the same inequality.  
With
\[
  C_\gamma:=\max\{C_\gamma^{(1)},C_4\},
\]
we obtain
\[
  \|\mathbf{y}_1-\mathbf{y}_2\|_\infty
  \le
  C_\gamma\,\|\mathbf{x}_1-\mathbf{x}_2\|_\infty^\gamma.
\]
This completes the proof.
\end{proof}



\begin{corollary}
\label{cor:dimension-preservation}
With the above construction:
\[
\dim_H R_{t,L}(B, A, (M_k), (W_k)) \geq \dim_H R_{t,L}(B)
\]
\end{corollary}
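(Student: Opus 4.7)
The plan is to observe that the corollary is essentially a direct consequence of Theorem~\ref{thm:elim-almost-lip} combined with the abstract dimension estimate for almost Lipschitz maps recorded in Lemma~\ref{lem:almost-lip}. All the real work has already been done in establishing the Hölder control \eqref{eq:holder-gamma}; the corollary is the geometric conclusion that digit insertion cannot decrease Hausdorff dimension.

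More concretely, I would proceed as follows. First, recall that by construction the elimination map $f$ is a bijection from $R_{t,L}(B,A,(M_k),(W_k))$ onto $R_{t,L}(B)$, since $f$ simply deletes the inserted blocks $W_k$ at the prescribed positions and thus inverts $\iota$. Second, invoke Theorem~\ref{thm:elim-almost-lip}: for every $\gamma\in(0,1)$ there exists $C_\gamma>0$ with
\[
  \|f(\mathbf{x}_1)-f(\mathbf{x}_2)\|_\infty \le C_\gamma\|\mathbf{x}_1-\mathbf{x}_2\|_\infty^{\gamma}
  \qquad\bigl(\mathbf{x}_1,\mathbf{x}_2\in R_{t,L}(B,A,(M_k),(W_k))\bigr),
\]
so $f$ is almost Lipschitz in the sense of Subsection~\ref{subsec:almost-lip}. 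Third, apply Lemma~\ref{lem:almost-lip} to obtain
\[
  \dim_H f\bigl(R_{t,L}(B,A,(M_k),(W_k))\bigr)\ \le\ \dim_H R_{t,L}(B,A,(M_k),(W_k)).
\]
Since $f\bigl(R_{t,L}(B,A,(M_k),(W_k))\bigr)=R_{t,L}(B)$ by the first step, the desired inequality
\[
  \dim_H R_{t,L}(B)\ \le\ \dim_H R_{t,L}(B,A,(M_k),(W_k))
\]
follows immediately.

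There is essentially no obstacle at this stage: the hard analytic input, namely the Hölder bound on $f$ under the growth conditions \eqref{growth3}--\eqref{growth4}, has been established in Theorem~\ref{thm:elim-almost-lip}, and Lemma~\ref{lem:almost-lip} is a standard consequence of covering the image by images of covers and sending the Hölder exponent $\gamma\uparrow 1$. The only point worth verifying carefully is that $f$ is genuinely well-defined and surjective onto $R_{t,L}(B)$, which is clear from the definition of $\iota$ and the fact that $B\cap\bigcup_k W_k$ can be arranged to be empty (or, at worst, that the insertion recipe makes $\iota$ injective by design). Thus the corollary is a clean compilation of the preceding results.
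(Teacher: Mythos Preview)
Your argument is correct and is exactly the approach taken in the paper, which simply says the corollary follows immediately from Theorem~\ref{thm:elim-almost-lip} and Lemma~\ref{lem:almost-lip}. You have just unpacked this one-line proof in detail, correctly noting the surjectivity of $f$ onto $R_{t,L}(B)$ as the reason the image dimension equals $\dim_H R_{t,L}(B)$.
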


\begin{proof}
This follows immediately from Theorem~\ref{thm:elim-almost-lip} and Lemma~\ref{lem:almost-lip}.
\end{proof}

	\section{Proofs of the main theorems}\label{sec:proofs}

\subsection{Proof of Theorem~\ref{thm:ubd-main}}
\label{subsec:ubd-proof}

We now give the proof of Theorem~\ref{thm:ubd-main}, which relies on Proposition~\ref{prop:extreme-seed} to establish the upper bound.

\begin{proof}[Proof of Theorem~\ref{thm:ubd-main}]
\emph{Upper bound.}
Fix $\varepsilon>0$ and choose $N\ge N_0(\varepsilon)$ as in
Theorem~\ref{thm:full-restriction} such that
\[
\dim_H(\widetilde{\mathcal E}_N)\le \frac d2+\varepsilon,
\]
where
\[
E_{\ge N}
:=
\{x\in(0,1): a_n(x)\ge N \text{ for all }n\ge1\},
\qquad
\widetilde{\mathcal E}_N:=(E_{\ge N})^d\subset(0,1)^d.
\]

Let $\mathbf x\in \mathcal E_S$. Since $a_n(x_j)\to\infty$ for each $1\le j\le d$,
there exists $n_0$ such that $a_n(x_j)\ge N$ for all $n\ge n_0$ and all $j$.
Equivalently, $\mathbf x$ belongs to the set obtained from $\widetilde{\mathcal E}_N$
by modifying only finitely many initial digit vectors. By Lemma~\ref{lem:delete-finite},
\[
\dim_H(\mathcal E_S)\le \dim_H(\widetilde{\mathcal E}_N)
\le \frac d2+\varepsilon.
\]

The same argument applies to $\mathcal E_S^{\mathrm{vec}}$ (in particular,
$\mathcal E_S^{\mathrm{vec}}\subset \mathcal E_S$), and hence
\[
\dim_H(\mathcal E_S^{\mathrm{vec}})
\le
\dim_H(\mathcal E_S)
\le
\frac d2+\varepsilon.
\]
Letting $\varepsilon\downarrow0$ yields
\[
\dim_H(\mathcal E_S^{\mathrm{vec}})
\le
\dim_H(\mathcal E_S)
\le
\frac d2.
\]

\emph{Lower bound.}
Assume $\overline d(S)>0$. By Lemma~\ref{lem:balanced-density} there exists $K\ge 1$
such that
\[
\overline d\bigl(S\cap\mathcal C_K\bigr)>0,
\qquad
\mathcal C_K
=
\Bigl\{\mathbf a\in\N^d:\frac{\max_j a_j}{\min_j a_j}\le K\Bigr\}.
\]
Set $S_K:=S\cap\mathcal C_K$. Then $S_K$ is infinite and uniformly $K$--balanced.
Moreover, for every $N\ge1$ we have $|S_K\cap Q_N|\le |Q_N|=N^d$, so $S_K$ has
polynomial density with exponent $\alpha=1$ in the sense of
Proposition~\ref{prop:extreme-seed}.

We may therefore apply Proposition~\ref{prop:extreme-seed} to $S_K$ with $\alpha=1$.
It yields a subset $S^\ast\subset S_K$ and parameters $t>L>1$ (with $t$ sufficiently
large) such that
\[
\dim_H R_{t,L}(S^\ast)=\frac d2.
\]
By Definition~\ref{def:seed_sets} and Definition~\ref{def:extreme-seed}, every
$\mathbf x\in R_{t,L}(S^\ast)$ satisfies
\[
\mathbf a_n(\mathbf x)\in S^\ast\subset S
\quad\text{for all }n\ge1,
\qquad
t^n\le \|\mathbf a_n(\mathbf x)\|_\infty< L t^n.
\]
Since $S^\ast\subset \mathcal C_K$, we also have
\[
\min_{1\le j\le d} a_n(x_j)
\ \ge\
\frac{1}{K}\max_{1\le j\le d} a_n(x_j)
\ =\
\frac{1}{K}\,\|\mathbf a_n(\mathbf x)\|_\infty
\ \ge\
\frac{t^n}{K}
\ \longrightarrow\ \infty.
\]
In particular, $\mathbf x\in \mathcal E_S^{\mathrm{vec}}$, and hence
\[
R_{t,L}(S^\ast)\subset \mathcal E_S^{\mathrm{vec}}\subset \mathcal E_S.
\]
Therefore
\[
\dim_H(\mathcal E_S^{\mathrm{vec}})
\ \ge\
\dim_H R_{t,L}(S^\ast)
\ =\
\frac d2,
\qquad\text{and hence}\qquad
\dim_H(\mathcal E_S)\ge \frac d2.
\]

Combining the upper and lower bounds, we conclude that
\[
\dim_H(\mathcal E_S^{\mathrm{vec}})
=
\dim_H(\mathcal E_S)
=
\frac d2.
\]
\end{proof}

\subsection{Proof of Theorem~\ref{thm:Zdd-relative}}
\label{subsec:proof-relative}

In the relative setting we assume throughout that the ambient set $S\subset\N^d$
has polynomial density exponent $\alpha\ge1$ and is uniformly $K$--balanced, i.e.
$S\subset\mathcal C_K$ for some $K\ge1$. Let $A\subset S$ satisfy
$\dbar(A\mid S)>0$.
Proposition~\ref{prop:extreme-seed} then produces an extreme seed root set
$S^\ast\subset S$ with $\dbar(S^\ast\mid S)=0$ and parameters $t>L>1$ such that
\begin{equation*}\label{eq:dim-root-seed-relative}
  \dim_H R_{t,L}(S^\ast)=\frac{d}{2\alpha}.
\end{equation*}
To retain the density of $A$ while keeping the seed geometry, we will insert digits
from $A\setminus S^\ast$ in sparse windows determined by a fast increasing sequence
$(M_k)$.

\subsubsection{A density bookkeeping lemma.}
The next lemma guarantees that one can choose the insertion windows so that the
union of inserted digits recovers the relative density.

\begin{lemma}[Density recovery]\label{lem:density-recovery}
Let $A_1\subset\N^d$ have polynomial density of exponent $\alpha\ge1$, and let
$A_2\subset A_1$ satisfy $\dbar(A_2\mid A_1)>0$.
Fix $t>L>1$.
Then there exists a strictly increasing sequence $(M_k)_{k\ge1}$ such that, if
for each $k\ge1$ we define
\begin{align}\label{eq:def-Ik-Wk-density-recovery}
  I_k
  &:=
  \bigl\{v\in\N^d:\ Lt^{M_{k-1}}+1\le \|v\|_\infty \le Lt^{M_{k-1}}+M_k^{1/(2d)}\bigr\},\\
  W_k
  &:=A_2\cap I_k,
  \nonumber
\end{align}
then
\begin{align}\label{eq:density-recovery-conclusion}
  \dbar\!\left(\bigcup_{k\ge1}W_k\,\Bigm|\,A_1\right)
  \;=\;
  \dbar(A_2\mid A_1).
\end{align}
\end{lemma}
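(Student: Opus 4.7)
The upper bound $\dbar(\bigcup_k W_k\mid A_1)\le \alpha_\ast:=\dbar(A_2\mid A_1)$ is immediate from $\bigcup_k W_k\subset A_2$. For the lower bound, the plan is to construct $(M_k)_{k\ge1}$ inductively so that, along a well-chosen subsequence, the outer radius $N_k:=\lfloor Lt^{M_{k-1}}+M_k^{1/(2d)}\rfloor$ of the shell $I_k$ tracks a witness scale of $\dbar(A_2\mid A_1)$, while the inner box $Q_{\lfloor Lt^{M_{k-1}}\rfloor}$ contributes a vanishing share of $|A_1\cap Q_{N_k}|$. Roughly, I want the shell $I_k$ alone to dominate $Q_{N_k}$ both geometrically and in the $A_1$-counting sense.

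\textbf{Construction and density comparison.} Fix a strictly increasing sequence of integers $R_j\to\infty$ with $|A_2\cap Q_{R_j}|/|A_1\cap Q_{R_j}|\to\alpha_\ast$. Proceeding by induction on $k$, given $M_{k-1}$, I will pick an index $j(k)$ so large that $R_{j(k)}\ge k\cdot(Lt^{M_{k-1}})^{2d}$, and then set $M_k$ to be the positive integer closest to $(R_{j(k)}-Lt^{M_{k-1}})^{2d}$. This guarantees $M_k>M_{k-1}$, $|N_k-R_{j(k)}|=O(1)$, and $Lt^{M_{k-1}}/N_k\to0$. Since $I_k=Q_{N_k}\setminus Q_{\lfloor Lt^{M_{k-1}}\rfloor}$ up to a boundary of size $O(1)$, one has $|W_k|=|A_2\cap I_k|\ge |A_2\cap Q_{N_k}|-|A_1\cap Q_{\lfloor Lt^{M_{k-1}}\rfloor}|$, and hence
\[
\frac{|\bigcup_{k'\le k}W_{k'}\cap Q_{N_k}|}{|A_1\cap Q_{N_k}|}\ \ge\ \frac{|A_2\cap Q_{N_k}|}{|A_1\cap Q_{N_k}|}-\frac{|A_1\cap Q_{\lfloor Lt^{M_{k-1}}\rfloor}|}{|A_1\cap Q_{N_k}|}.
\]
The polynomial density of $A_1$ with exponent $\alpha$ (and the reasoning behind Lemma~\ref{lem:ratio-asymptotic}) shows that the subtracted ratio is $\ll (Lt^{M_{k-1}}/N_k)^{d/\alpha}$ up to bounded logarithmic factors, hence tends to $0$. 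The same polynomial density absorbs the $O(1)$ discrepancy between $N_k$ and $R_{j(k)}$, so the first ratio still converges to $\alpha_\ast$ along $N_k$. Taking $\limsup_k$ yields $\dbar(\bigcup_k W_k\mid A_1)\ge\alpha_\ast$, which completes the lower bound.

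\textbf{Main obstacle.} The crux is arranging that the thin annular shell $I_k$ itself realizes the relative density $\alpha_\ast$ inside $A_1\cap Q_{N_k}$, even though $I_k$ is a priori a narrow layer. This forces the quantitative growth condition $M_k^{1/(2d)}\gg Lt^{M_{k-1}}$, which simultaneously guarantees geometric dominance of $I_k$ in $Q_{N_k}$ and, via the polynomial--density hypothesis on $A_1$, dominance at the level of the counting measure of $A_1$. A secondary but minor technicality is that $N_k$ can only take values of the form $\lfloor Lt^{M_{k-1}}+m^{1/(2d)}\rfloor$ for $m\in\N$, so matching witness scales $R_{j(k)}$ exactly is impossible; this is handled by rounding, since the resulting $O(1)$ discrepancy is harmless as $R_{j(k)}\to\infty$.
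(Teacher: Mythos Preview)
Your approach is essentially the paper's: trivially bound above by $A_2$, choose witness scales $R_j$ for $\dbar(A_2\mid A_1)$, and at each stage pick $M_k$ so that the outer radius of $I_k$ sits at a witness scale, then bound $|W_k|$ from below by $|A_2\cap Q_{N_k}|-|A_1\cap Q_{\lfloor Lt^{M_{k-1}}\rfloor}|$ and kill the subtracted term.

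There is, however, a small but genuine gap in your rounding step. You claim polynomial density of $A_1$ ``absorbs the $O(1)$ discrepancy between $N_k$ and $R_{j(k)}$, so the first ratio still converges to $\alpha_\ast$ along $N_k$.'' This does not follow. Polynomial density only gives $C_1 N^{d/\alpha}/(\log N)^\beta\le |A_1\cap Q_N|\le C_2 N^{d/\alpha}/(\log N)^\beta$, so a single shell $Q_N\setminus Q_{N-1}$ may contain up to $(C_2-C_1+o(1))\,N^{d/\alpha}/(\log N)^\beta$ points of $A_1$ --- a constant fraction of $|A_1\cap Q_N|$. If that shell happens to lie entirely in $A_2$ (or entirely outside it), the ratio $|A_2\cap Q_N|/|A_1\cap Q_N|$ can jump by a fixed amount when $N$ shifts by $1$. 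Since you have no regularity hypothesis on $A_2$, you cannot conclude that $|A_2\cap Q_{N_k}|/|A_1\cap Q_{N_k}|$ still tends to $\alpha_\ast$.

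The paper sidesteps this cleanly: with $r_{k,\min}:=\lfloor Lt^{M_{k-1}}\rfloor$, it sets $m_k:=L_{j_k}-r_{k,\min}\in\N$ and $M_k:=m_k^{2d}$, so that $M_k^{1/(2d)}=m_k$ \emph{exactly} and the outer radius of $I_k$ is $L_{j_k}$ on the nose. It then evaluates the relative density at $L_{j_k}$, not at a rounded proxy; no absorption argument is needed. (Incidentally, the paper imposes $|A_1\cap Q_{r_{k,\min}}|/|A_1\cap Q_{L_{j_k}}|<1/k$ directly in the choice of $j_k$, using only $|A_1\cap Q_N|\to\infty$, rather than your estimate via Lemma~\ref{lem:ratio-asymptotic}.) Replacing your ``closest integer'' choice by this exact $2d$-th power --- or, equivalently, rounding $M_k$ up and evaluating at $R_{j(k)}$ instead of $N_k$ --- fixes the gap immediately.
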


\begin{proof}
Set $\delta:=\dbar(A_2\mid A_1)>0$.
By definition of $\dbar(\cdot\mid\cdot)$, there exists an increasing sequence
$(L_j)_{j\ge1}$ such that
\begin{equation*}\label{eq:delta-witness-seq}
  \frac{\#(A_2\cap Q_{L_j})}{\#(A_1\cap Q_{L_j})}\longrightarrow \delta
  \qquad (j\to\infty).
\end{equation*}
We inductively choose a subsequence $(L_{j_k})$ and then define $(M_k)$.
Set $M_0:=0$ and assume $M_0,\dots,M_{k-1}$ have been fixed.
Let
\begin{equation*}\label{eq:def-rkmin}
  r_{k,\min}:=\bigl\lfloor Lt^{M_{k-1}}\bigr\rfloor\in\N.
\end{equation*}
Since $\#(A_1\cap Q_n)\to\infty$ and $r_{k,\min}$ is fixed at stage $k$, we may
choose $j_k$ so large that $L_{j_k}>r_{k,\min}+1$ and
\begin{align}\label{eq:choose-jk}
  \left|
  \frac{\#(A_2\cap Q_{L_{j_k}})}{\#(A_1\cap Q_{L_{j_k}})}-\delta
  \right|
  <\frac1k,
  \qquad
  \frac{\#(A_1\cap Q_{r_{k,\min}})}{\#(A_1\cap Q_{L_{j_k}})}<\frac1k.
\end{align}
Define 
\begin{equation*}\label{eq:def-mk-Mk}
  m_k:=L_{j_k}-r_{k,\min}\in\N,
  \qquad
  M_k:=m_k^{2d}.
\end{equation*}
Because $\|v\|_\infty\in\N$, the window in \eqref{eq:def-Ik-Wk-density-recovery}
coincides with the integer shell
\begin{align}\label{eq:Ik-shell}
  I_k
  =
  \bigl\{v\in\N^d:\ r_{k,\min}+1\le\|v\|_\infty\le r_{k,\min}+m_k\bigr\}
  =
  \bigl\{v\in\N^d:\ r_{k,\min}+1\le\|v\|_\infty\le L_{j_k}\bigr\}.
\end{align}

For the lower bound in \eqref{eq:density-recovery-conclusion}, evaluate the
$\limsup$ at radii $R_k:=L_{j_k}$.
Since $W_k\subset(\bigcup_{m\ge1}W_m)\cap Q_{R_k}$,
\begin{equation*}\label{eq:lowerbd-start}
  \frac{\#((\bigcup_{m\ge1}W_m)\cap Q_{R_k})}{\#(A_1\cap Q_{R_k})}
  \ge
  \frac{\#(W_k)}{\#(A_1\cap Q_{R_k})}.
\end{equation*}
Moreover, by \eqref{eq:Ik-shell},
\begin{equation*}\label{eq:Wk-diff}
  \#(W_k)
  =
  \#(A_2\cap Q_{R_k})-\#(A_2\cap Q_{r_{k,\min}})
  \ge
  \#(A_2\cap Q_{R_k})-\#(A_1\cap Q_{r_{k,\min}}),
\end{equation*}
hence
\begin{equation*}\label{eq:lowerbd-delta}
  \frac{\#(W_k)}{\#(A_1\cap Q_{R_k})}
  \ge
  \frac{\#(A_2\cap Q_{R_k})}{\#(A_1\cap Q_{R_k})}
  -
  \frac{\#(A_1\cap Q_{r_{k,\min}})}{\#(A_1\cap Q_{R_k})}
  >
  \delta-\frac1k-\frac1k
\end{equation*}
by \eqref{eq:choose-jk}. Taking $\limsup_{k\to\infty}$ gives
$\dbar(\bigcup_{m\ge1}W_m\mid A_1)\ge\delta$.

For the matching upper bound, note $\bigcup_{m\ge1}W_m\subset A_2$, so
$\dbar(\bigcup_{m\ge1}W_m\mid A_1)\le\dbar(A_2\mid A_1)=\delta$.
Thus \eqref{eq:density-recovery-conclusion} holds.
\end{proof}

 \subsubsection{ Windows and a uniform product estimate.}
Fix an increasing sequence $(M_k)$ (to be chosen) and define the insertion windows
\begin{align}\label{def_I_k_rela_new}
  I_k
  :=
  \bigl\{v\in\N^d:\ Lt^{M_{k-1}}+1\le \|v\|_\infty \le Lt^{M_{k-1}}+M_k^{1/(2d)}\bigr\}.
\end{align}
The next lemma is the bookkeeping bound needed in the almost--Lipschitz
elimination step.

\begin{lemma}\label{lem:Wk-lower-bound}
Assume $0<M_{k-1}<M_k$ and let $I_k$ be given by \eqref{def_I_k_rela_new}.
If \eqref{growth0} and \eqref{growth2} hold, then
\begin{align}\label{eq:Ik-product-lowerbd}
  \prod_{w\in I_k}\|w\|_\infty^{-2}
  \;\ge\;
  \left(\prod_{i=M_{k-1}+1}^{M_k} t^{-2i}\right)^{3\varepsilon_k}.
\end{align}
\end{lemma}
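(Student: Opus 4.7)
The plan is to lower-bound $\prod_{w \in I_k} \|w\|_\infty^{-2}$ by combining a uniform pointwise bound for $\|w\|_\infty$ on $I_k$ with a counting estimate for $|I_k|$, and then to invoke \eqref{growth2} to finish. First, from the definition of $I_k$ in \eqref{def_I_k_rela_new}, every $w \in I_k$ satisfies
\[
\|w\|_\infty \;\le\; Lt^{M_{k-1}} + M_k^{1/(2d)} \;<\; Lt^{M_{k-1}} + M_k^{1/(2d)} + 1.
\]
Inverting, squaring, and multiplying over $w \in I_k$ yields
\[
\prod_{w \in I_k} \|w\|_\infty^{-2} \;\ge\; \bigl(Lt^{M_{k-1}} + M_k^{1/(2d)} + 1\bigr)^{-2|I_k|},
\]
so the entire task reduces to showing $2|I_k| \le 2^{d+1} M_k^{1/2}$ (since the base on the right exceeds $1$, a larger exponent gives a smaller lower bound, so we want to keep the exponent small).

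For the counting estimate, I would observe that $I_k$ is contained in the $\ell_\infty$-box $\{v \in \N^d : \|v\|_\infty \le Lt^{M_{k-1}} + M_k^{1/(2d)}\}$, hence $|I_k| \le (Lt^{M_{k-1}} + M_k^{1/(2d)})^d$. Growth condition \eqref{growth0} now enters decisively: it gives $Lt^{M_{k-1}} < M_k^{1/(2d)}$, so $Lt^{M_{k-1}} + M_k^{1/(2d)} < 2 M_k^{1/(2d)}$, whence $|I_k| \le 2^d M_k^{1/2}$ and therefore $2|I_k| \le 2^{d+1} M_k^{1/2}$. Substituting this back produces
\[
\prod_{w \in I_k} \|w\|_\infty^{-2} \;\ge\; \bigl(Lt^{M_{k-1}} + M_k^{1/(2d)} + 1\bigr)^{-2^{d+1} M_k^{1/2}},
\]
which is exactly the left-hand side of \eqref{growth2}.

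Applying hypothesis \eqref{growth2} then immediately yields the claim \eqref{eq:Ik-product-lowerbd}. The main point to recognize is not analytic but structural: the exponent $2^{d+1} M_k^{1/2}$ in \eqref{growth2} has been engineered precisely to absorb the counting bound $2|I_k| \le 2^{d+1} M_k^{1/2}$, and this counting bound in turn requires \eqref{growth0} to collapse the $Lt^{M_{k-1}}$ term into $M_k^{1/(2d)}$. The edge case $I_k = \emptyset$ (possible only for very small $M_k$, excluded by the monotonicity assumption $M_{k-1} < M_k$ together with the definition of $\nu$) is trivial since the empty product equals $1$. Once the matching of exponents is spotted, the argument is pure bookkeeping, with no genuine analytic obstacle.
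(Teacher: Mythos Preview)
Your proof is correct and follows essentially the same approach as the paper: bound $\|w\|_\infty$ uniformly on $I_k$, bound $|I_k|$ via \eqref{growth0} by $2^d M_k^{1/2}$, and then invoke \eqref{growth2}. The only cosmetic difference is that the paper writes $\#I_k$ as a difference of two box counts before bounding, whereas you use the containing box directly; both routes give the same $|I_k|\le 2^d M_k^{1/2}$ and the rest is identical.
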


\begin{proof}
By \eqref{def_I_k_rela_new} we have
\begin{equation*}\label{eq:Ik-cardinality}
  \#I_k
  =
  \bigl(Lt^{M_{k-1}}+M_k^{1/(2d)}\bigr)^d-\bigl(Lt^{M_{k-1}}\bigr)^d
  \le
  \bigl(Lt^{M_{k-1}}+M_k^{1/(2d)}+1\bigr)^d.
\end{equation*}
Condition \eqref{growth0} is exactly the growth hypothesis ensuring
$Lt^{M_{k-1}}+M_k^{1/(2d)}+1\le 2M_k^{1/(2d)}$ for all large $k$, hence
\begin{align}\label{eq:Ik-size-bound}
  \#I_k\le (2M_k^{1/(2d)})^d=2^d M_k^{1/2}.
\end{align}
Also, for every $w\in I_k$ we have
$\|w\|_\infty \le Lt^{M_{k-1}}+M_k^{1/(2d)}+1$, so
\begin{equation*}\label{eq:Ik-product-step}
  \prod_{w\in I_k}\|w\|_\infty^{-2}
  \ge
  \bigl(Lt^{M_{k-1}}+M_k^{1/(2d)}+1\bigr)^{-2\#I_k}
  \ge
  \bigl(Lt^{M_{k-1}}+M_k^{1/(2d)}+1\bigr)^{-2^{d+1}M_k^{1/2}}
\end{equation*}
by \eqref{eq:Ik-size-bound}.
Finally, \eqref{growth2} is precisely the hypothesis that the last expression is
bounded below by the right-hand side of \eqref{eq:Ik-product-lowerbd}.
\end{proof}

\subsubsection{Proof of Theorem~\ref{thm:Zdd-relative}.}

\begin{proof}[Proof of Theorem~\ref{thm:Zdd-relative}]
Assume $S\subset\N^d$ has polynomial density exponent $\alpha\ge1$ and is
uniformly $K$--balanced, i.e.\ $S\subset\mathcal C_K$ for some $K\ge1$.
Let $A\subset S$ satisfy $\dbar(A\mid S)>0$.
By Proposition~\ref{prop:extreme-seed}, there exist a subset $S^\ast\subset S$
with $\dbar(S^\ast\mid S)=0$ and parameters $t>L>1$ (with $t$ sufficiently large)
such that
\begin{equation*}\label{eq:root-dim}
  \dim_H R_{t,L}(S^\ast)=\frac{d}{2\alpha}.
\end{equation*}

Since $\dbar(S^\ast\mid S)=0$ and $A\subset S$, we also have
$\dbar(A\cap S^\ast\mid S)=0$, hence
\begin{align}\label{eq:remove-thin-part}
  \dbar(A\setminus S^\ast\mid S)=\dbar(A\mid S).
\end{align}
Now fix an increasing sequence $(M_k)$ and define $I_k$ by
\eqref{def_I_k_rela_new}. Set
\begin{equation*}\label{eq:insertion_digits_new}
  W_k:=(A\setminus S^\ast)\cap I_k.
\end{equation*}

We choose $(M_k)$ inductively so that:
\begin{itemize}
\item the surgery hypotheses \eqref{growth0}--\eqref{growth3} hold (so that the
almost--Lipschitz elimination  applies), and
\item simultaneously, $(M_k)$ satisfies Lemma~\ref{lem:density-recovery} with
$A_1=S$ and $A_2=A\setminus S^\ast$ (so that the inserted digits recover
$\dbar(A\mid S)$).
\end{itemize}
At stage $k$, this is achieved by taking $M_k$ sufficiently large; in particular,
in the proof of Lemma~\ref{lem:density-recovery} one may increase the chosen
radius $L_{j_k}$ further to meet the finitely many growth requirements at that
step.

With these choices, Lemma~\ref{lem:Wk-lower-bound} and
Lemma~\ref{lem:parameter_selection} ensure \eqref{growth4}, hence all hypotheses
of Theorem~\ref{thm:elim-almost-lip} are satisfied.
The elimination map is Hölder with any exponent $\gamma\in(0,1)$, so
\begin{align}\label{eq:dim-lower}
  \dim_H R_{t,L}(S^\ast, A\setminus S^\ast,(M_k),(W_k))
  \ge
  \dim_H R_{t,L}(S^\ast)
  =
  \frac{d}{2\alpha}.
\end{align}

On the other hand, every point in
$R_{t,L}(S^\ast, A\setminus S^\ast,(M_k),(W_k))$ uses only digits from $S$,
because $S^\ast\subset S$ and $A\setminus S^\ast\subset S$.
Therefore Proposition~\ref{thm:elim-almost-lip} yields
\begin{align}\label{eq:dim-upper}
  \dim_H R_{t,L}(S^\ast, A\setminus S^\ast,(M_k),(W_k))
  \le
  \dim_H\{x\in\cE:\ D(x)\subset S\}
  =
  \frac{d}{2\alpha},
\end{align}
where the final equality uses $S\subset\mathcal C_K$ and the sharp upper bound in
the balanced case.

Combining \eqref{eq:dim-lower} and \eqref{eq:dim-upper} gives equality throughout,
so the constructed set has Hausdorff dimension $d/(2\alpha)$.

Finally, by Lemma~\ref{lem:density-recovery} applied with $A_1=S$ and
$A_2=A\setminus S^\ast$, together with \eqref{eq:remove-thin-part}, the union
$\bigcup_{k\ge1}W_k$ satisfies
\begin{equation*}\label{eq:final-density}
  \dbar\!\left(\bigcup_{k\ge1}W_k\,\Bigm|\,S\right)
  =
  \dbar(A\setminus S^\ast\mid S)
  =
  \dbar(A\mid S).
\end{equation*}
This is exactly the required density recovery in the relative construction, and
completes the proof.
\end{proof}

\subsection{Proof of Theorem~\ref{thm:Zdd-fractal-transference}}
To ensure divergence, we require a lemma similar to Lemma~\ref{lem:density-recovery}. Since Theorem~\ref{thm:Zdd-fractal-transference} does not include the balanced ambient condition, divergence is not automatic. However, with positive upper density, we can overcome the balanced ambient condition.

\begin{remark}\label{rem:hd-elim-new-work}
Compared with \cite{Nakajima2025}, the insertion--elimination scheme is the same.
The extra input in dimension $d$ is geometric and combinatorial:

\begin{itemize}[leftmargin=2em]
  \item We use the product structure and one--dimensional geometry of cylinders
  (Lemma~\ref{lem:cylinder_structure} and Lemma~\ref{lem:1d_geometry}) to control
  side lengths and diameters in $(0,1)^d$.

  \item To ensure disjointness and quantitative gaps between sibling cylinders,
  we impose $2$--separation in $\|\cdot\|_\infty$ and apply the separation lemmas  
  (Lemma~\ref{lem:sibling_cylinder_separation} and Lemma~\ref{lem:cylinder_digit_separation}).

  \item Divergence is not automatic from polynomial growth in $\N^d$.
  In the Banach/upper density setting we can avoid a global balance assumption by
  using diverging translations: Lemma~\ref{lem:density-reduction} produces witnessing
  boxes $Q_{N_k}(\bv_k)$ with $\min_i (v_k)_i\to\infty$, so the digits chosen from these
  boxes drift to infinity in every coordinate, which enforces divergence when needed.

  \item Under positive upper density, bounded anisotropy can be recovered on a
  positive--density subset: Lemma~\ref{lem:balanced-density} and
  Lemma~\ref{lem:Ck-density} allow us to restrict to $A\cap\mathcal C_K$ with
  $\overline d(A\cap\mathcal C_K)>0$, so the later covering estimates may be carried
  out in uniformly balanced regimes even if the original set is not balanced.
\end{itemize}
\end{remark}

We will need a density--recovery lemma that simultaneously enforces divergence: it
selects insertion blocks $W_k\subset A$ whose union has the same upper density as $A$,
while $\min_j w_j\to\infty$ along the blocks.  This will be applied in the proof of
Theorem~\ref{thm:Zdd-fractal-transference}, Part~(a).

\begin{lemma}[Density recovery with divergence]\label{lem:density-recovery-div}
Let $A\subset\N^d$ satisfy $\dbar(A)=:\delta>0$. Fix $t>L>1$.
Then there exists a strictly increasing sequence $(M_k)_{k\ge1}$ such that, if for
each $k\ge1$ we define
\begin{align}\label{eq:def-Ik-Wk-density-recovery-div}
  I_k
  &:=
  \bigl\{v\in\N^d:\ Lt^{M_{k-1}}+1\le \|v\|_\infty \le Lt^{M_{k-1}}+M_k^{1/(2d)}\bigr\},\\
  T_k
  &:=
  \bigl\{v\in\N^d:\ \min_{1\le j\le d} v_j\ge k\bigr\},
  \nonumber\\
  W_k
  &:=
  A\cap I_k\cap T_k,
  \nonumber
\end{align}
then
\begin{align}\label{eq:density-recovery-conclusion-div}
  \dbar\!\left(\bigcup_{k\ge1}W_k\right)=\dbar(A)=\delta,
\end{align}
and moreover every $w\in W_k$ satisfies $\min_{1\le j\le d} w_j\ge k$, hence
\begin{align}\label{eq:mincoord-div}
  \min_{1\le j\le d} w_j \xrightarrow[k\to\infty]{}\infty
  \qquad\text{uniformly for }w\in W_k.
\end{align}
\end{lemma}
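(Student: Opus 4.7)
The plan is to follow the bookkeeping strategy of Lemma~\ref{lem:density-recovery}, but with an additional boundary-slab correction that absorbs the divergence cut $T_k$. Note that \eqref{eq:mincoord-div} is immediate from the definition $W_k = A \cap I_k \cap T_k$, so the only substantive task is to establish the density equality \eqref{eq:density-recovery-conclusion-div}. The upper bound $\dbar(\bigcup_k W_k) \le \delta$ is trivial, because $W_k \subset A$ for every $k$.

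For the lower bound, the plan is to fix a witnessing sequence $L_j \to \infty$ along which $|A \cap Q_{L_j}|/|Q_{L_j}| \to \delta$, and then to mimic the inductive construction of $(M_k)$ used in Lemma~\ref{lem:density-recovery}: at stage $k$, set $r_{k,\min} := \lfloor L t^{M_{k-1}} \rfloor$ and pick $j_k$ so that $L_{j_k}$ is very large compared with $r_{k,\min}$, the density at scale $L_{j_k}$ is within $1/k$ of $\delta$, and $|A \cap Q_{r_{k,\min}}|/|Q_{L_{j_k}}|<1/k$. Then define $m_k := L_{j_k} - r_{k,\min}$ and $M_k := m_k^{2d}$, which makes $I_k$ coincide with the integer shell $\{v \in \N^d : r_{k,\min}+1 \le \|v\|_\infty \le L_{j_k}\}$, exactly as in the proof of Lemma~\ref{lem:density-recovery}.

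The new ingredient is controlling the cut by $T_k$. Since $Q_{L_{j_k}} \setminus T_k = \{v \in Q_{L_{j_k}} : \min_j v_j < k\}$ is a union of $d$ boundary slabs of thickness $k-1$, it has cardinality at most $d(k-1) L_{j_k}^{d-1}$, so the relative deficit at scale $L_{j_k}$ is at most $d(k-1)/L_{j_k}$. Imposing the additional smallness condition $L_{j_k} \ge k^2$ at each stage forces this to vanish, and the three-term bound
\[
  |W_k| \;\ge\; |A \cap Q_{L_{j_k}}| - |A \cap Q_{r_{k,\min}}| - d(k-1) L_{j_k}^{d-1},
\]
divided by $|Q_{L_{j_k}}|$, tends to $\delta$. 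Since $W_k \subset \bigcup_m W_m$, this yields $\dbar(\bigcup_m W_m) \ge \delta$ by evaluating the $\limsup$ along the scales $L_{j_k}$.

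The main obstacle, if any, is simply compatibility of the several smallness requirements imposed on $L_{j_k}$ at each stage, namely the density witnessing within $1/k$ of $\delta$, the control $|A \cap Q_{r_{k,\min}}|/|Q_{L_{j_k}}|<1/k$, and the boundary-slab condition $L_{j_k} \gg k$. All three demand only that $L_{j_k}$ be sufficiently large, and because $(L_j)$ is an unbounded density-witnessing sequence, passing to a sparse enough subsequence accommodates them simultaneously; no new analytic input is required beyond what already appears in Lemma~\ref{lem:density-recovery}.
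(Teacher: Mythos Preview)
Your proposal is correct and follows essentially the same approach as the paper's proof. The only organizational difference is that the paper first records $\dbar(A\cap T_m)=\delta$ for each fixed $m$ (via the same boundary-slab count) and then picks a witnessing radius $R_k$ for $A\cap T_k$ at each stage, whereas you fix a single witnessing sequence for $A$ and subtract the slab correction $d(k-1)L_{j_k}^{d-1}$ at the end; both routes yield the same three-term lower bound for $|W_k|/|Q_{L_{j_k}}|$.
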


\begin{proof}
For $m\ge1$ set $A^{(m)}:=A\cap T_m$.
Since $\N^d\setminus T_m=\bigcup_{i=1}^d\{v\in\N^d:\ v_i\le m-1\}$ and
\[
  \bigl|\{v\in Q_N:\ v_i\le m-1\}\bigr|\le (m-1)N^{d-1},
\]
we have $|(\N^d\setminus T_m)\cap Q_N|/|Q_N|\le d(m-1)/N\to0$ as $N\to\infty$.
Hence $\dbar(\N^d\setminus T_m)=0$ and therefore
\begin{align}\label{eq:thick-preserve-dbar}
  \dbar(A^{(m)})=\dbar(A\cap T_m)=\dbar(A)=\delta
  \qquad\text{for every fixed }m.
\end{align}

We now inductively choose $(M_k)$ together with auxiliary radii $(R_k)$.
Set $M_0:=0$. Assume $M_0,\dots,M_{k-1}$ have been fixed and define
\begin{equation*}\label{eq:def-rkmin-div}
  r_{k,\min}:=\bigl\lfloor Lt^{M_{k-1}}\bigr\rfloor\in\N.
\end{equation*}
Using \eqref{eq:thick-preserve-dbar} with $m=k$, we can choose $R_k\in\N$ so large that
$R_k>r_{k,\min}+1$ and
\begin{align}\label{eq:choose-Rk-div}
  \left|
  \frac{|A^{(k)}\cap Q_{R_k}|}{|Q_{R_k}|}-\delta
  \right|<\frac1k,
  \qquad
  \frac{|Q_{r_{k,\min}}|}{|Q_{R_k}|}<\frac1k.
\end{align}
Set
\begin{equation*}\label{eq:def-mk-Mk-div}
  m_k:=R_k-r_{k,\min}\in\N,
  \qquad
  M_k:=m_k^{2d}.
\end{equation*}
Then $M_k^{1/(2d)}=m_k$, and since $\|v\|_\infty\in\N$ we have the identity of shells
\begin{align}\label{eq:Ik-as-shell-div}
  I_k
  =
  \bigl\{v\in\N^d:\ r_{k,\min}+1\le \|v\|_\infty\le r_{k,\min}+m_k\bigr\}
  =
  \bigl\{v\in\N^d:\ r_{k,\min}+1\le \|v\|_\infty\le R_k\bigr\}.
\end{align}
Define $W_k:=A\cap I_k\cap T_k=A^{(k)}\cap I_k$ as in \eqref{eq:def-Ik-Wk-density-recovery-div}.

For the lower bound on $\dbar(\bigcup_{m\ge1}W_m)$, evaluate at $N=R_k$.
Since $W_k\subset(\bigcup_{m\ge1}W_m)\cap Q_{R_k}$, we get
\begin{equation*}\label{eq:lowerbd-start-div}
  \frac{|(\bigcup_{m\ge1}W_m)\cap Q_{R_k}|}{|Q_{R_k}|}
  \ge
  \frac{|W_k|}{|Q_{R_k}|}.
\end{equation*}
Moreover, by \eqref{eq:Ik-as-shell-div},
\begin{equation*}\label{eq:Wk-diff-div}
  |W_k|
  =
  |A^{(k)}\cap Q_{R_k}|-|A^{(k)}\cap Q_{r_{k,\min}}|
  \ge
  |A^{(k)}\cap Q_{R_k}|-|Q_{r_{k,\min}}|.
\end{equation*}
Dividing by $|Q_{R_k}|$ and using \eqref{eq:choose-Rk-div} yields
\begin{equation*}\label{eq:lowerbd-delta-div}
  \frac{|W_k|}{|Q_{R_k}|}
  \ge
  \frac{|A^{(k)}\cap Q_{R_k}|}{|Q_{R_k}|}-\frac{|Q_{r_{k,\min}}|}{|Q_{R_k}|}
  >
  \delta-\frac1k-\frac1k.
\end{equation*}
Taking $\limsup_{k\to\infty}$ gives
$\dbar(\bigcup_{m\ge1}W_m)\ge\delta$.

For the reverse inequality, note $\bigcup_{m\ge1}W_m\subset A$, hence
$\dbar(\bigcup_{m\ge1}W_m)\le\dbar(A)=\delta$. This proves
\eqref{eq:density-recovery-conclusion-div}. Finally, \eqref{eq:mincoord-div} follows
immediately from $W_k\subset T_k$.
\end{proof}

\subsubsection{Proof of Theorem~\ref{thm:Zdd-fractal-transference}, Part~(a)}
 \begin{proof}[Proof of Theorem~\ref{thm:Zdd-fractal-transference}, Part~(a)]
Fix $S\subset\N^d$ with $\dbar(S)=:\delta>0$.
We construct a set $\mathcal F_S\subset\mathcal E_{\N^d}^{\mathrm{vec}}$
such that
\begin{equation*}\label{eq:target-density-part-a}
  \dbar\bigl(D(\bx)\cap S\bigr)=\delta
  \qquad\text{for every }\bx\in\mathcal F_S,
\end{equation*}
and $\dim_H(\mathcal F_S)=d/2$.

Fix $K_0\ge2$ and set $S_0:=\mathcal C_{K_0}$.
Since $|S_0\cap Q_N|\asymp N^d$, the set $S_0$ has polynomial density exponent $1$.
Applying Proposition~\ref{prop:extreme-seed} to $S_0$ (with $\alpha=1$), we obtain
parameters $t>L>1$ (with $t$ sufficiently large) and a subset $B\subset S_0$ such that
\begin{equation*}\label{eq:def-background-R-part-a}
  \mathcal R:=R_{t,L}(B)
\end{equation*}
is non-empty and
\begin{equation*}\label{eq:dim-background-R-part-a}
  \dim_H(\mathcal R)=\frac d2.
\end{equation*}
Moreover, every $\by\in\mathcal R$ satisfies $\ba_n(\by)\in B\subset\mathcal C_{K_0}$ and
$t^n\le \|\ba_n(\by)\|_\infty< Lt^n$, hence
\begin{align}\label{eq:background-divergence}
  \min_{1\le j\le d} a_n(y_j)
  \ge
  \frac{1}{K_0}\,\|\ba_n(\by)\|_\infty
  \ge
  \frac{t^n}{K_0}
  \longrightarrow\infty.
\end{align}
In particular, $\mathcal R\subset\mathcal E_{\N^d}^{\mathrm{vec}}$.

Apply Lemma~\ref{lem:density-recovery-div} to $A:=S$ with the above parameters $t>L>1$.
We obtain a strictly increasing sequence $(M_k)_{k\ge1}$ and finite sets
$W_k\subset S$ such that, writing $W:=\bigcup_{k\ge1}W_k$,
\begin{align}\label{eq:W-density-part-a-final}
  \dbar(W)=\dbar(S)=\delta,
  \qquad
  \min_{1\le j\le d} w_j\ge k\ \ \text{for all }w\in W_k.
\end{align}
By enlarging $M_k$ if necessary, we may assume in addition that the growth
conditions \eqref{growth0}--\eqref{growth4} required for the surgery/elimination
construction hold; at each stage this imposes only finitely many constraints that are
preserved when $M_k$ increases.

Define
\begin{equation*}\label{eq:def-FS-part-a-final}
  \mathcal F_S:=R_{t,L}\bigl(B,\N^d,(M_k),(W_k)\bigr).
\end{equation*}
By construction, the block $W_k$ is inserted (in a fixed order) at the digit positions
$\{M_k,\dots,M_k+|W_k|-1\}$, independently of the choice of $\bx\in\mathcal F_S$.
Consequently,
\begin{equation*}\label{eq:W-subset-Dx-part-a-final}
  W\subset D(\bx)\cap S
  \qquad\text{for every }\bx\in\mathcal F_S.
\end{equation*}
Since $D(\bx)\cap S\subset S$, we obtain
\begin{equation*}\label{eq:density-identity-part-a-final}
  \dbar(S)=\dbar(W)
  \le
  \dbar\bigl(D(\bx)\cap S\bigr)
  \le
  \dbar(S),
\end{equation*}
and hence $\dbar(D(\bx)\cap S)=\dbar(S)$ for all $\bx\in\mathcal F_S$.

Let $\bx\in\mathcal F_S$. Along the background positions, the digit vectors of $\bx$
are taken from $B\subset\mathcal C_{K_0}$ at scales comparable to $t^n$; therefore
\eqref{eq:background-divergence} implies coordinatewise divergence along those indices.
Along the inserted positions belonging to the $k$th inserted block, every digit vector
lies in $W_k$, hence satisfies $\min_j a_n(x_j)\ge k$ by \eqref{eq:W-density-part-a-final}.
Since $k\to\infty$ as we move to later blocks, the inserted digits also diverge
coordinatewise. Hence $\ba_n(\bx)\to\infty$ coordinatewise, and therefore
\begin{align}\label{eq:FS-in-Evec}
  \mathcal F_S\subset\mathcal E_{\N^d}^{\mathrm{vec}}.
\end{align}

Let $f:\mathcal F_S\to\mathcal R$ be the elimination map deleting all inserted blocks.
By Theorem~\ref{thm:elim-almost-lip}, $f$ is H\"older for every $\gamma\in(0,1)$ and is
surjective. Hence
\begin{equation*}\label{eq:dim-lower-part-a-final}
  \dim_H(\mathcal F_S)\ge \dim_H(\mathcal R)=\frac d2.
\end{equation*}
Together with \eqref{eq:FS-in-Evec} and $\dim_H(\mathcal E_{\N^d}^{\mathrm{vec}})=d/2$,
we conclude that
\[
  \dim_H\Bigl\{\bx\in\mathcal E_{\N^d}^{\mathrm{vec}}:\ \dbar(D(\bx)\cap S)=\dbar(S)\Bigr\}
  =\frac d2,
\]
as desired.
\end{proof}

\subsubsection{Proof of Theorem~\ref{thm:Zdd-fractal-transference}, Part (b)}

We will use Lemma~\ref{lem:density-reduction} to convert Banach density information
into deterministic insertion blocks: it replaces $S$ by $S'\subset S$ with
$\dbar(S')=0$ and $\dB(S')=\dB(S)$ and provides witnesses $Q_{N_k}(\bv_k)$ with
$\min_i(v_k)_i\to\infty$.  Setting $W_k:=S'\cap Q_{N_k}(\bv_k)$ gives dense blocks
whose digits drift to infinity in every coordinate.

Compared with the one--dimensional argument in \cite{Nakajima2025}, this is the
additional multidimensional step: in $\N^d$ coordinatewise divergence is not
automatic (cf.\ $\N\times F$), and Lemma~\ref{lem:density-reduction} enforces it
by pushing the Banach witnesses off to infinity.

\begin{proof}[Proof of Theorem~\ref{thm:Zdd-fractal-transference}, Part (b)]

Let $\beta:=\dB(S)>0$. If $\dB(S)=\dbar(S)$, then Part~(a) applied to $S$ yields a set $F_S$ with
$\dim_H F_S=d/2$ and $\dbar(D(\mathbf x)\cap S)=\dbar(S)=\beta$ for every
$\mathbf x\in F_S$. Since $\dB(\cdot)\ge \dbar(\cdot)$ and $D(\mathbf x)\cap S\subset S$,
we also have $\dB(D(\mathbf x)\cap S)=\beta=\dB(S)$, proving Part~(b).

Assume henceforth that $\dB(S)>\dbar(S)$.
Apply Lemma~\ref{lem:density-reduction} to obtain a subset $S_0\subset S$, a strictly
increasing sequence $(N_k)_{k\ge1}$, and vectors $(\mathbf v_k)_{k\ge1}\subset\N^d$ such that
$\dB(S_0)=\dB(S)=\beta$, $\dbar(S_0)=0$, $\min_i (v_k)_i\to\infty$, and
\begin{equation*}\label{eq:Banach-witness-b}
  \frac{|S_0\cap Q_{N_k}(\mathbf v_k)|}{|Q_{N_k}|}\longrightarrow \beta.
\end{equation*}
Replacing $S$ by $S_0$ (and relabelling), we may assume $\dbar(S)=0$ and
\eqref{eq:Banach-witness-b} holds with $\beta=\dB(S)$.

Passing to a subsequence, we may assume the boxes
$B_k:=Q_{N_k}(\mathbf v_k)$ are pairwise disjoint. Define $W_k:=S\cap B_k$ and
$m_k:=|W_k|$. Then \eqref{eq:Banach-witness-b} becomes
\begin{equation*}\label{eq:Wk-density-b}
  \frac{|W_k|}{|Q_{N_k}|}\longrightarrow \dB(S)=\beta.
\end{equation*}
Moreover, since $W_k\subset \mathbf v_k+Q_{N_k}$, every $w\in W_k$ satisfies
$\min_i w_i\ge \min_i (v_k)_i+1$, hence $\min_i w_i\to\infty$ uniformly for $w\in W_k$.

Since $\dbar(S)=0$, the complement $S^c:=\N^d\setminus S$ has upper density $1$.
By Lemma~\ref{lem:balanced-density}, there exists $K\ge1$ such that
$\dbar(S^c\cap \mathcal C_K)>0$, where
$\mathcal C_K:=\{\mathbf a\in\N^d:\max_j a_j/\min_j a_j\le K\}$.
Set $A:=S^c\cap \mathcal C_K$. Then $A$ is uniformly $K$--balanced and has
polynomial density exponent $1$.

Applying Proposition~\ref{prop:extreme-seed} to $A$, we obtain parameters $t>L>1$
(with $t$ sufficiently large) and an infinite $2$--separated set $B\subset A$ such that
$\mathcal R:=R_{t,L}(B)$ is non-empty and $\dim_H\mathcal R=d/2$.
Since $B\subset\mathcal C_K$, every $\mathbf y\in\mathcal R$ satisfies
$t^n\le \|\mathbf a_n(\mathbf y)\|_\infty<Lt^n$ and
$\min_j a_n(y_j)\ge \|\mathbf a_n(\mathbf y)\|_\infty/K\to\infty$, so $\mathcal R\subset E_{\N^d}^{\mathrm{vec}}$.

Fix a decreasing sequence $(\varepsilon_k)_{k\ge1}$ with $\varepsilon_k\downarrow0$.
We choose a strictly increasing sequence $(M_k)_{k\ge1}$ with $M_0=0$ such that, for every
$k\ge1$, the data
$\bigl(t,L;\, A:=S,\ B;\, (M_i),\ (W_i),\ (\varepsilon_i)\bigr)$ satisfy \eqref{growth3} and
\eqref{growth4}, and in addition
\begin{equation*}\label{eq:Mk-separation-b}
  M_k\ge M_{k-1}+m_{k-1}+1.
\end{equation*}
This is possible because, at stage $k$, \eqref{growth3}--\eqref{growth4} impose only finitely
many explicit inequalities involving the previously chosen data and the new integer $M_k$,
and each such inequality is preserved when $M_k$ is increased.

Define
\begin{equation*}\label{eq:def-FS-b}
  F_S:=R_{t,L}\bigl(B,S,(M_k),(W_k)\bigr).
\end{equation*}
By construction, every inserted digit lies in $S$, while every background digit lies in
$B\subset S^c$, so the two digit alphabets are disjoint. Since the boxes $B_k$ are disjoint,
the sets $W_k$ are disjoint as well, and $B$ is $2$--separated; hence no digit vector is
repeated in the construction.

We next verify divergence. Background digits come from $\mathcal R\subset E_{\N^d}^{\mathrm{vec}}$ and
therefore diverge coordinatewise along the background positions. Inserted digits belong to
$W_k$ and satisfy $\min_i w_i\to\infty$ along the inserted blocks. Hence every
$\mathbf x\in F_S$ has coordinatewise digits diverging, and thus $F_S\subset E_{\N^d}^{\mathrm{vec}}$.

Let $f:F_S\to\mathcal R$ be the elimination map deleting all inserted blocks.
By Theorem~\ref{thm:elim-almost-lip}, $f$ is H\"older for every $\gamma\in(0,1)$ and surjective.
Hence $\dim_H F_S\ge \dim_H\mathcal R=d/2$. Together with $F_S\subset E_{\N^d}^{\mathrm{vec}}$ and
$\dim_H( E_{\N^d}^{\mathrm{vec}})=d/2$, we obtain
\begin{align}\label{eq:dim-FS-b}
  \dim_H F_S=\frac d2.
\end{align}

Finally set
\[
  D_*:=\bigcup_{n\ge1}\ \bigcap_{\mathbf x\in F_S}\{\mathbf a_n(\mathbf x)\}.
\]
For each $k$, the construction \eqref{eq:def-FS-b} inserts the block $W_k$ (in a fixed order)
at the digit positions $\{M_k,\dots,M_k+m_k-1\}$, independently of $\mathbf x\in F_S$.
Thus $W_k\subset D_*\cap S$ for every $k$, and therefore
\begin{align*}
  \frac{|(D_*\cap S)\cap B_k|}{|Q_{N_k}|}
  \ge
  \frac{|W_k|}{|Q_{N_k}|}.
\end{align*}
Taking $\limsup$ and using \eqref{eq:Wk-density-b} gives $\dB(D_*\cap S)\ge \dB(S)$, while
$\dB(D_*\cap S)\le \dB(S)$ is immediate from $D_*\cap S\subset S$. Hence
\[
  \dB\Bigl(
    \bigcup_{n\ge1}\ \bigcap_{\mathbf x\in F_S}\{\mathbf a_n(\mathbf x)\}\cap S
  \Bigr)=\dB(S),
\]
as required. The ``in particular'' statement follows from \eqref{eq:dim-FS-b}.
\end{proof}

 \section{References}

\section*{Acknowledgements}

We are grateful to Professors Yubin He and Leiye Xu for helpful discussions, and to Rongzhong Xiao for valuable comments.
This work is supported by the National Key R\&D Program of China (Nos.~2024YFA1013602 and 2024YFA1013600).

\bigskip
\noindent
Zhuowen Guo\\
\textsc{School of Mathematical Sciences, University of Science and Technology of China} \par\nopagebreak
\noindent
\href{mailto:guozw0920@mail.ustc.edu.cn}
{\texttt{guozw0920@mail.ustc.edu.cn}}

\bigskip
\noindent
Kangbo Ouyang\\
\textsc{School of Mathematical Sciences, University of Science and Technology of China} \par\nopagebreak
\noindent
\href{mailto:oy19981231@mail.ustc.edu.cn}
{\texttt{oy19981231@mail.ustc.edu.cn}}

\bigskip
\noindent
Jiahao Qiu\\
\textsc{School of Mathematical Sciences, University of Science and Technology of China} \par\nopagebreak
\noindent
\href{mailto:qiujh@mail.ustc.edu.cn}
{\texttt{qiujh@mail.ustc.edu.cn}}

\bigskip
\noindent
Shuhao Zhang\\
\textsc{School of Mathematical Sciences, University of Science and Technology of China} \par\nopagebreak
\noindent
\href{mailto:yichen12@mail.ustc.edu.cn}
{\texttt{yichen12@mail.ustc.edu.cn}}

\end{document}